\providecommand{\U}[1]{\protect\rule{.1in}{.1in}}
\theoremstyle{plain}
\newtheorem{definition}{Definition}
\newtheorem{lemma}{Lemma}
\newtheorem{proposition}{Proposition}
\newtheorem{remark}{Remark}
\newtheorem{theorem}{Theorem}
\numberwithin{equation}{section}
\begin{document}
\title{Proof of the BMV Conjecture}
\author{Herbert R Stahl}
\curraddr{Beuth Hochschule/FB II; Luxemburger Str. 10; 13 353 Berlin; Germany}
\email{HerbertRStahl@Googlemail.com}
\thanks{The research has been supported by the grant STA 299/13-1 der Deutschen
Forschungsgemeinschaft (DFG)}
\subjclass[2000]{ Primary 15A15, 15A16; Secondary 30F10, 44A10}
\keywords{BMV conjecture, Laplace transform, special matrix functions}
\date{August 8, 2012}

\begin{abstract}
We prove the BMV (\underline{B}essis, \underline{M}oussa, \underline{V}illani,
\cite{BMV75}) conjecture, which states that the function $t\mapsto
\operatorname*{Tr}\exp(A-tB)$, $t\geq0,$ is the Laplace transform of a
positive measure on $[0,\infty)$ if $A$ and $B$ are $n\times n$ Hermitian
matrices and $B$ is positive semidefinite. A semi-explicit representation for
this measure is given.

\end{abstract}
\maketitle

\section{Introduction}

\subsection{\label{sec_1_1}The Conjecture}

\qquad Let $A$ and $B$ be two $n\times n$ Hermitian matrices and let $B$ be
positive semidefinite. In \cite{BMV75} it has been conjectured that under
these assumptions the function%
\begin{equation}
f(t):=\operatorname*{Tr}e^{A-tB},\text{ \ \ }t\geq0,\label{f1_1a0}%
\end{equation}
can be represented as the Laplace transform%
\begin{equation}
f(t)=\int e^{-t\,s}d\mu_{A,B}(s)\label{f1_1a}%
\end{equation}
of a positive measure $\mu_{A,B}$ on $\mathbb{R}_{+}=[0,\infty)$. In the
present article we prove this conjecture from 1975 and give a semi-explicit
expression for the measure $\mu_{A,B}$ (cf. Theorems \ref{sec1_thm1} and
\ref{sec1_thm2}, below).

Over the years different approaches and techniques have been tested for
proving the conjecture. Surveys are contained in \cite{Moussa00} and
\cite{Hillar07}. Recent publications are typically concerned with techniques
from non-commutative algebra and combinatorics (\cite{HillarJohnson05},
\cite{JohnsonLeichenauerMcNamaraCostas05}, \cite{Hansen06}, \cite{Haegele07},
\cite{Hillar07}, \cite{KlepSchweighofer08}, \cite{LandweberSpeer09},
\cite{CollinsDykemaTorres10}, \cite{FleischhackFriedland10}, \cite{Burgdorf11}%
). This direction of research was opened by a reformulation of the problem in
\cite{LiebSeiringer04}. Although our approach will follow a different line
of\ analysis, we nevertheless repeat the main assertions from
\cite{LiebSeiringer04} in the next subsection as points of reference for later discussions.

\subsection{\label{sec1_2}Reformulations of the Conjecture}

\begin{definition}
\label{sec1_def1}A function $f\in C^{\infty}(\mathbb{R}_{+})$ is called
\textbf{completely monotonic} if%
\[
(-1)^{m}f^{(m)}(t)\geq0\text{ \ \ for all \ \ }m\in\mathbb{N}\text{ \ and
\ }t\in\mathbb{R}_{+}.
\]

\end{definition}

By Bernstein's theorem about completely monotonic functions (cf.
\cite{Donoghue} or \cite[Chapter IV]{Widder}) this property is equivalent to
the existence of the Laplace transform (\ref{f1_1a}) with a positive measure
on $\mathbb{R}_{+}$. In this way, Definition \ref{sec1_def1} gives a first
reformulation of the BMV conjecture.\smallskip

In \cite{LiebSeiringer04} two other reformulations have been proved. It has
been shown that the conjecture is equivalent to each of the following two assertions:

\begin{enumerate}
\item[(i)] Let $A$ and $B$ be two positive semidefinite Hermitian matrices.
For each $m\in\mathbb{N}$ the polynomial $t\mapsto Tr(A+tB)^{m}$ has only
non-negative coefficients.

\item[(ii)] Let $A$ be a positive definite and $B$ a positive semidefinite
Hermitian matrix. For each $p>0$ the function $t\mapsto Tr(A+tB)^{-p}$ is the
Laplace transform of a positive measure on $\mathbb{R}_{+}$.
\end{enumerate}

Especially, reformulation (i) has paved the way for extensive research
activities with tools from non-commutative algebra; several of the papers have
been mentioned earlier. The parameter $m$ in assertion (i) introduces a new
and discrete gradation of the problem. Presently, assertion (i) has been
proved for $m\leq13$ (cf. \cite{Haegele07}, \cite{KlepSchweighofer08}). The
BMV-conjecture itself is still unproven, even for the general case of matrices
with a dimension as low as $n=3$. In the diploma thesis \cite{Grafendorfer07}
the case $n=3$ has been investigated very carefully by a combination of
numerical and analytical tools, but no counterexample could be found.

In \cite{LiebSeiringer04} one also finds a short review of the relevance of
the BMV conjecture in mathematical physics, the area from which the problem
arose originally.\footnote{Meanwhile, in a follow-up paper
\cite{LiebSeiringer12} to \cite{LiebSeiringer04}, the reformulations of the
BMV conjecture have been extended, and the conjecture itself has been
generalised by replacing the expression on the left-hand side of
(\ref{f1_1a0}) by elementary symmetric polynomials of order $m\in
\{1,\ldots,n\}$ of exponentials of the $n$ eigenvalues of the expression
$A-t\,B$. The expression in (\ref{f1_1a0}) with the trace operator then
corresponds to the case $m=1$.}\medskip

Among the earlier investigations of the conjecture, especially
\cite{MehtaKumar76} has been very impressive and fascinating for the author.
There, already in 1976, the conjecture was proved for a rather broad class of
matrices, including the two groups of examples with explicit solutions that we
will state next.

\subsection{Two Groups of Examples with Explicit Solutions\smallskip}

\subsubsection{\label{sec1_3_1}Commuting Matrices $A$ and $B$}

\qquad If the two matrices $A$ and $B$ commute, then they can be diagonalized
simultaneously, and\ consequently the BMV conjecture becomes solvable rather
easily; the measure $\mu_{A,B}$ in (\ref{f1_1a})\ is then given by%
\begin{equation}
\mu_{A,B}=\sum_{j=1}^{n}e^{a_{j}}\,\delta_{b_{j}}\label{f1_1b}%
\end{equation}
with $a_{1},\ldots,a_{n}$ and $b_{1},\ldots,b_{n}$ the eigenvalues of the two
matrices $A\ $and $B,$ respectively, and $\delta_{x}$ the Dirac measure at the
point $x$. Indeed, the trace of a matrix $M$ is invariant under similarity
transformations $M\mapsto T\,M\,T^{-1}$. Therefore, we can assume without loss
of generality that $A$ and $B$ are given in diagonal form, and measure
(\ref{f1_1b}) follows immediately.\smallskip

\subsubsection{Matrices of Dimension $n=2$}

\qquad We consider $2\times2$ Hermitian matrices $A$ and $B$ with $B$ assumed
to be positive semidefinite. In order to keep notations simple, we assume $B$
to be given in diagonal form $B=\operatorname*{diag}(b_{1},b_{2})$ with $0\leq
b_{1}\leq b_{2}$.

If $b_{1}=b_{2}$, then, without loss of generality, also the matrix $A$ can be
assumed to be given in diagonal form, and consequently the case is covered by
(\ref{f1_1b}). Thus, we have to consider only the situation that%
\begin{equation}
A=\left(
\begin{array}
[c]{cc}%
a_{11} & a_{12}\\
\overline{a}_{12} & a_{22}%
\end{array}
\right)  ,\text{ \ \ }B=\left(
\begin{array}
[c]{cc}%
b_{1} & 0\\
0 & b_{2}%
\end{array}
\right)  ,\text{ \ \ }0\leq b_{1}<b_{2}<\infty.\label{f1_1c}%
\end{equation}

\begin{proposition}
\label{sec1_prop1}If the matrices $A$ and $B$ are given by (\ref{f1_1c}), then
the function $t\mapsto\operatorname*{Tr}\exp(A-tB)$, $t\in\mathbb{R}_{+}$, in
(\ref{f1_1a0}) can be represented as a Laplace transform (\ref{f1_1b}) with
the positive measure%
\begin{equation}
d\mu_{A,B}(t)=e^{a_{11}}d\delta_{b_{1}}(t)+e^{a_{22}}d\delta_{b_{2}%
}(t)+w_{A,B}(t)\chi_{\left(  b_{1},b_{2}\right)  }(t)dt,\text{ \ \ }%
t\in\mathbb{R}_{+},\label{f1_1d}%
\end{equation}
where $\chi_{\left(  b_{1},b_{2}\right)  }$\ denotes the characteristic
function of the interval $\left(  b_{1},b_{2}\right)  ,$ and the density
function $w_{A,B}$ is given by%
\begin{align}
& w_{A,B}(t)=\frac{4}{(b_{2}-b_{1})\,\pi}\exp\left(  \frac{a_{11}%
(b_{2}-t)+a_{22}(t-b_{1})}{b_{2}-b_{1}}\right)  \times\medskip\label{f1_1e}\\
& \text{ \ \ \ \ \ \ \ \ \ \ \ \ \ \ \ \ \ \ \ \ \ \ }\times\int_{0}%
^{|a_{12}|}\cos\left(  \frac{b_{2}+b_{1}-2\,t}{b_{2}-b_{1}}\,u\right)
\,\sinh\left(  \sqrt{|a_{12}|^{2}-u^{2}}\right)  du.\ \ \nonumber
\end{align}
This density function is positive for all $b_{1}<t<b_{2}$.\smallskip
\end{proposition}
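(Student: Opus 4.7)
The plan is to verify (\ref{f1_1d})--(\ref{f1_1e}) by a direct computation: first reduce the claim to a single scalar analytic identity via the explicit $2\times 2$ matrix-exponential formula, and then establish that identity by Bessel-function expansions. Put $c:=|a_{12}|$, $\sigma(t):=\tfrac12\operatorname{tr}(A-tB)$, and $\delta(t):=\tfrac12\bigl((a_{11}-tb_1)-(a_{22}-tb_2)\bigr)$. The traceless part $N(t):=A-tB-\sigma(t)I$ satisfies $N(t)^2=(\delta(t)^2+c^2)I$, so
\[
\operatorname{Tr}\exp(A-tB)=2e^{\sigma(t)}\cosh r(t),\qquad r(t):=\sqrt{\delta(t)^2+c^2}.
\]
Since $e^{a_{11}-tb_1}+e^{a_{22}-tb_2}=2e^{\sigma(t)}\cosh\delta(t)$, Proposition \ref{sec1_prop1} is equivalent to the claim
\[
\int_{b_1}^{b_2}e^{-ts}w_{A,B}(s)\,ds=2e^{\sigma(t)}\bigl(\cosh r(t)-\cosh\delta(t)\bigr).
\]

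To prove this, I would substitute $s=b_1+\theta(b_2-b_1)$ on the left, interchange the order of the $\theta$- and $u$-integrations in (\ref{f1_1e}), and evaluate the resulting inner $\theta$-integral as
\[
\int_0^1 e^{(a_{11}-tb_1)(1-\theta)+(a_{22}-tb_2)\theta}\cos\bigl((1-2\theta)u\bigr)d\theta=e^{\sigma(t)}\operatorname{Re}\frac{\sinh(\delta(t)+iu)}{\delta(t)+iu}.
\]
After this reduction what remains is the parameter-free analytic identity
\[
\cosh\sqrt{\delta^2+c^2}-\cosh\delta=\frac{2}{\pi}\int_0^c\sinh\sqrt{c^2-u^2}\,\operatorname{Re}\frac{\sinh(\delta+iu)}{\delta+iu}\,du
\]
for $\delta\in\mathbb{R}$, $c>0$, which carries all the analytic content.

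For this identity I would use $\operatorname{Re}[\sinh(\delta+iu)/(\delta+iu)]=\tfrac12\int_{-1}^1 e^{\delta\eta}\cos(\eta u)d\eta$ together with Fubini to rewrite its right-hand side as $\pi^{-1}\int_{-1}^1 e^{\delta\eta}\Phi(\eta)d\eta$, where $\Phi(\eta):=\int_0^c\cos(\eta u)\sinh\sqrt{c^2-u^2}du$. Expanding $\sinh\sqrt{c^2-u^2}$ in its Taylor series and applying Poisson's integral for $J_{j+1}$, combined with the Bessel generating identity $e^{x(v+v^{-1})/2}=\sum_n I_n(x)v^n$ (via Schl\"afli's contour integral), produces the closed form
\[
\Phi(\eta)=\frac{\pi c\,I_1(c\sqrt{1-\eta^2})}{2\sqrt{1-\eta^2}},\qquad|\eta|<1.
\]
Substituting this back, expanding $e^{\delta\eta}$ against $(1-\eta^2)^k$ via the Poisson formula for $I_{k+1/2}$, and reorganizing the resulting double sum, both sides of the required identity collapse to $\sum_{N\geq 1}(2N)!^{-1}\sum_{j=1}^N\binom{N}{j}\delta^{2(N-j)}c^{2j}$, which is exactly $\cosh\sqrt{\delta^2+c^2}-\cosh\delta$.

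The strict positivity of $w_{A,B}$ on $(b_1,b_2)$ is then immediate from the closed form of $\Phi$: for $t\in(b_1,b_2)$ the argument $\eta=(b_1+b_2-2t)/(b_2-b_1)$ lies in $(-1,1)$, and $I_1(x)>0$ for $x>0$. I expect the principal obstacle to be the closed-form evaluation of $\Phi$; once that Bessel identity is in hand, every remaining step is routine series bookkeeping.
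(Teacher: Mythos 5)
Your proposal is correct, and it takes a genuinely different route from the paper. The paper obtains (\ref{f1_1d})--(\ref{f1_1e}) top--down, as a specialization of Theorem \ref{sec1_thm2}: the density is first written as the contour integral $\frac{1}{2\pi i}\oint_{C_{1}}e^{\lambda_{1}(\zeta)+x\zeta}d\zeta$ over the explicit square-root branch $\lambda_{1}$ of (\ref{f6_1a2}), and the contour is then collapsed onto the branch cut $[-i|a_{12}|,i|a_{12}|]$ to produce (\ref{f1_1e}); positivity is proved separately in Subsection \ref{sec6_2} by a level-curve and integration-by-parts argument that is a miniature of the general Section \ref{sec5} machinery. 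You instead verify the Laplace-transform identity bottom--up and entirely elementarily: the $2\times2$ exponential via the traceless decomposition reduces everything to the scalar identity $\cosh\sqrt{\delta^{2}+c^{2}}-\cosh\delta=\frac{2}{\pi}\int_{0}^{c}\sinh\sqrt{c^{2}-u^{2}}\,\operatorname{Re}\frac{\sinh(\delta+iu)}{\delta+iu}\,du$, which I have checked (your closed form $\Phi(\eta)=\pi c\,I_{1}(c\sqrt{1-\eta^{2}})/(2\sqrt{1-\eta^{2}})$ is exactly the series the paper computes in (\ref{f6_3a4}), and the double-sum recombination via the duplication formula does collapse to $\sum_{N\ge1}(2N)!^{-1}\sum_{j=1}^{N}\binom{N}{j}\delta^{2(N-j)}c^{2j}$). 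Your positivity argument is then immediate from $I_{1}>0$; in effect you have rederived the Mehta--Kumar density (\ref{f6_3a2}), which the paper only brings in afterwards, in Subsection \ref{sec6_3}, as a consistency check. What each approach buys: the paper's route exists precisely to illustrate the general contour/level-curve mechanism in the simplest nontrivial case, whereas yours is self-contained, avoids all Riemann-surface and contour considerations, and makes positivity manifest --- at the cost of not generalizing beyond $n=2$. Two small points to tidy up in a written version: state explicitly that $a_{12}\neq0$ (for $a_{12}=0$ the density vanishes identically, as in the commuting case of Subsection \ref{sec1_3_1}), and justify the Fubini interchanges, which is routine since all integrands are continuous on compact domains.
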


Proposition \ref{sec1_prop1} will be proved in Section \ref{sec6}. In
\cite{MehtaKumar76} an explicit solution has also been proved for dimension
$n=2$; there the density function looks rather different from (\ref{f1_1e}),
and it has the advantage that its positivity can be recognized immediately,
while in our case of (\ref{f1_1e}) a nontrivial proof of positivity is
required (cf. Subsection \ref{sec6_2}).\medskip

\subsection{\label{sec1_4}The Main Result}

\qquad We prove two theorems. In the first one it is just stated that the BMV
conjecture is true, while in the second one we give a semi-explicit
representation for the positive measure $\mu_{A,B}$ in the Laplace transform
(\ref{f1_1a}). In many respects this second theorem is a generalization of
Proposition \ref{sec1_prop1}.

\begin{theorem}
\label{sec1_thm1}If $A$ and $B$ are two Hermitian matrices with $B$ positive
semidefinite, then there exists a unique positive measure $\mu_{A,B}$ on
$[0,\infty)$ such that (\ref{f1_1b}) holds for $t\geq0$. In other words: the
BMV conjecture holds true.
\end{theorem}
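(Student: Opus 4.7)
The plan is to view $f(z) := \operatorname{Tr} e^{A-zB}$ as an entire function of the complex variable $z$ and to extract a Laplace representation from the structure of its extension to the Riemann surface of the eigenvalue function of $A-zB$. First, I would reduce to the generic case in which $B$ is strictly positive definite with $n$ pairwise distinct eigenvalues: the singular cases can be recovered by a weak-$\ast$ limit after establishing a uniform tightness bound on the measures $\mu_{A,B_\varepsilon}$ produced for a smooth perturbation $B_\varepsilon \to B$. The commuting case (\ref{f1_1b}) handles $n=1$ and provides the right template for the point-mass part.

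The eigenvalues $\lambda_1(z),\ldots,\lambda_n(z)$ of $A-zB$ are the $n$ branches of an algebraic function of $z$ determined by $\det(\lambda I - A + zB) = 0$. They live single-valuedly as a meromorphic function $\lambda$ on a compact Riemann surface $\mathcal{R}$ that covers the Riemann sphere $n$-to-one. Two structural properties should be established: (i) all branch points of $\mathcal{R}$ over $\mathbb{C}$ lie off the real axis (and occur in conjugate-symmetric pairs), because $A-tB$ is Hermitian with real eigenvalues for real $t$; (ii) for $\operatorname{Im} z > 0$, every branch satisfies $\operatorname{Im} \lambda_j(z) \leq 0$, since $\operatorname{Im}(A-zB) = -(\operatorname{Im} z)B$ is negative semidefinite. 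Property (ii) endows the branches $-\lambda_j$ with a Herglotz-type character on the upper half-plane, which will be the engine of the positivity argument.

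Next I would express $f(t)$ as a contour integral on $\mathcal{R}$ of $e^{\lambda}$ against a differential pulled back from $\mathbb{C}$ and then deform a large Bromwich-type contour to collapse onto a system of analytic cuts whose projections to $\mathbb{C}$ lie in $[0,\infty)$. The cut system should join conjugate pairs of branch points through arcs crossing the positive real axis, with the cut combinatorics dictated by the sheet structure of $\mathcal{R}$. The projections onto the real line would support the absolutely continuous part of $\mu_{A,B}$, with density given as a sum of jumps $e^{\lambda_j^+}-e^{\lambda_j^-}$ across the cuts; isolated point masses located at the eigenvalues of $B$, as in (\ref{f1_1b}) and (\ref{f1_1d}), would arise from residues at the $n$ points of $\mathcal{R}$ above $z=\infty$, whose local expansions take the form $\lambda_j(z) \sim a_{jj} - b_j z$ in a suitable ordering.

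The main obstacle is positivity of the resulting density. Proposition \ref{sec1_prop1} already shows that in the $n=2$ case positivity of $w_{A,B}$ is nontrivial and requires a dedicated integral-representation argument; one cannot expect it to be visible term by term. In general, I expect positivity to follow from property (ii) combined with a globally consistent choice of cuts across all sheets of $\mathcal{R}$, so that the oriented jumps $e^{\lambda_j^+}-e^{\lambda_j^-}$ contribute with a common sign on each cut. Organizing this compatibility in arbitrary dimension, where sheet permutations and monodromy along the branch-point configuration interact in intricate ways, is in my view the technical heart of the proof, and the step where the full Hermitian/positive-semidefinite hypothesis on $(A,B)$ must be used decisively rather than merely formally.
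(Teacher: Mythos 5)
Your skeleton is recognizably the one the paper uses: reduce to $B$ positive definite, view the eigenvalues $\lambda_1,\ldots,\lambda_n$ of $A-zB$ as branches of an algebraic function living on a compact $n$-sheeted cover $\mathcal{R}_\lambda$ of $\overline{\mathbb{C}}$, note that there are no branch points over $\mathbb{R}$, obtain the atoms of $\mu_{A,B}$ from the simple-pole behavior $\lambda_j(z)\sim a_{jj}-b_jz$ at the $n$ points over $\infty$, and write the absolutely continuous part as contour integrals of $e^{\lambda_j(\zeta)+t\zeta}$. Your observation that $\operatorname{Im}\lambda_j(z)\le 0$ on the upper half-plane (via the numerical range of $A-zB$) is correct and is indeed the seed of the positivity mechanism.

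However, the proposal has a genuine gap exactly where you yourself flag it: you do not actually prove positivity of the density, and the mechanism you propose for it would not work as stated. You envision deforming a Bromwich contour onto ``cuts joining conjugate pairs of branch points'' and arranging that the jumps $e^{\lambda_j^+}-e^{\lambda_j^-}$ have a common sign across each cut. But the density (\ref{f1_3b}) is not naturally a sum of jumps across sheet-cuts of $\mathcal{R}_\lambda$, and there is no reason the individual jump contributions should be single-signed. What actually carries the paper's positivity proof is a different geometric object: for fixed $t\in(b_I,b_{I+1})$, one takes the level set $\{\zeta\in\mathcal{R}_\lambda:\operatorname{Im}(\lambda(\zeta)+t\pi_\lambda(\zeta))=0\}$ bounding the open set $D=\operatorname{Int}\overline{D_+\cup D_-}$ with $D_\pm=\{\pm\operatorname{Im}\pi_\lambda>0,\ \pm\operatorname{Im}(\lambda+t\pi_\lambda)>0\}$. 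This boundary is a chain $\gamma=\gamma_1+\cdots+\gamma_K$ of closed curves symmetric under the antiholomorphic involution of $\mathcal{R}_\lambda$; along each $\gamma_k$ the integrand $e^{\lambda+t\pi_\lambda}$ is real, $\operatorname{Re}(\lambda+t\pi_\lambda)$ is strictly monotone on each half of $\gamma_k$, and an integration by parts in $\operatorname{Im}\pi_\lambda$ gives a strict sign for $\frac{1}{2\pi i}\oint_{\gamma_k}e^{\lambda+t\pi_\lambda}\,d\zeta$; Cauchy's theorem on $\mathcal{R}_\lambda$ then identifies $-\oint_\gamma$ with $\sum_{b_j<t}\oint_{C_j}$. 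Your Herglotz bound enters only in guaranteeing that near $\infty$ the correct $I$ sheets lie in $D$, not in producing sign-definite jumps. Without some argument of this level of precision, the positivity step --- which is the entire difficulty of BMV, as your own $n=2$ discussion shows --- remains open.

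A lesser point: reducing to the fully nondegenerate case (distinct $b_j$, irreducible characteristic polynomial) by a weak-$*$ limit is more delicate than it may appear, since you would also need to prove that the measures $\mu_{A,B_\varepsilon}$ converge to a limit whose Laplace transform is $\operatorname{Tr}e^{A-tB}$ and remains the claimed form; the paper avoids this by reducing semidefinite $B$ to definite $B$ via a translation of Laplace transforms ($B\mapsto B+\varepsilon I$ shifts the measure by $\varepsilon$), and then handling repeated $b_j$ and reducibility of $\det(\lambda I-(A-tB))$ directly, factoring the surface into components $\mathcal{R}_{\lambda,l}$ and running the positivity argument on each factor with $n_l>1$.
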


For the formulation of the second theorem we need some preparations.

\begin{lemma}
\label{sec1_lem1}Let $A$ and $B$ be the two matrices from Theorem
\ref{sec1_thm1}. Then there exists a unitary matrix $T_{0}$ such that the
transformed matrices $\widetilde{A}=(\widetilde{a}_{ij}):=T_{0}^{\ast}AT_{0} $
and $\widetilde{B}:=T_{0}^{\ast}BT_{0}$ satisfy%
\begin{equation}
\widetilde{B}=\operatorname*{diag}\left(  \widetilde{b}_{1},\ldots
,\widetilde{b}_{n}\right)  \text{ \ \ with \ \ }0\leq\widetilde{b}_{1}%
\leq\cdots\leq\widetilde{b}_{n},\label{f1_2b}%
\end{equation}
and%
\begin{equation}
\widetilde{a}_{ij}=0\text{ \ \ for all\ \ \ }i,j=1,\ldots,n,i\neq j\text{
\ with \ }\widetilde{b}_{i}=\widetilde{b}_{j}.\label{f1_2c}%
\end{equation}

\end{lemma}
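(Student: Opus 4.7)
My plan is to do two successive unitary conjugations: first diagonalize $B$ with its eigenvalues in increasing order, then use the residual unitary freedom that preserves this diagonal form of $B$ to put the relevant blocks of $A$ into diagonal form.

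Since $B$ is Hermitian and positive semidefinite, the spectral theorem gives a unitary $U$ such that $U^{\ast}BU=\operatorname{diag}(\widetilde{b}_1,\ldots,\widetilde{b}_n)$ with $0\le\widetilde{b}_1\le\cdots\le\widetilde{b}_n$; simply sort the eigenvalues and permute the columns of the eigenvector matrix accordingly. Set $A':=U^{\ast}AU$, which is again Hermitian. Now partition the index set $\{1,\ldots,n\}$ into the maximal consecutive blocks $I_1,\ldots,I_r$ on which $\widetilde{b}_i$ is constant, so that on each block $I_k$ the matrix $U^{\ast}BU$ acts as the scalar $\beta_k\cdot I_{|I_k|}$.

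For each block $I_k$, let $A'_k$ be the corresponding principal submatrix of $A'$; it is Hermitian, so there is a unitary $V_k$ of size $|I_k|\times|I_k|$ with $V_k^{\ast}A'_kV_k$ diagonal. Assemble these into a block-diagonal unitary $V:=\operatorname{diag}(V_1,\ldots,V_r)$. Because $V$ is block-diagonal with blocks matching the eigenspaces of $U^{\ast}BU$, and the restriction of $U^{\ast}BU$ to each block is scalar, $V$ commutes with $U^{\ast}BU$; hence $V^{\ast}(U^{\ast}BU)V=U^{\ast}BU$, preserving (\ref{f1_2b}). On the other hand $V^{\ast}A'V$ is block-diagonalized on each $I_k$, so all its off-diagonal entries $(i,j)$ with $i,j\in I_k$ (i.e.\ with $\widetilde{b}_i=\widetilde{b}_j$) vanish, which is exactly (\ref{f1_2c}). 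Entries connecting different blocks correspond to indices with $\widetilde{b}_i\neq\widetilde{b}_j$ and are unconstrained.

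Setting $T_0:=UV$, which is unitary as a product of unitaries, and defining $\widetilde{A}:=T_0^{\ast}AT_0=V^{\ast}A'V$ and $\widetilde{B}:=T_0^{\ast}BT_0=V^{\ast}(U^{\ast}BU)V$, both (\ref{f1_2b}) and (\ref{f1_2c}) are satisfied. There is no genuine obstacle here: the argument is pure linear algebra, and the only subtlety worth flagging is the observation that a unitary preserving $\operatorname{diag}(\widetilde{b}_1,\ldots,\widetilde{b}_n)$ under conjugation must be block-diagonal with respect to the partition by equal eigenvalues, which is precisely what gives us the needed freedom to diagonalize $A'$ within each eigenspace while fixing $B$.
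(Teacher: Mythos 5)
Your proof is correct and follows essentially the same route as the paper's: diagonalize $B$ with sorted eigenvalues, then use the residual block-diagonal unitary freedom on the eigenspaces of $B$ (the "rotation of the corresponding subspaces" in the paper's wording) to diagonalize the principal submatrices of $A$ there. You have simply written out explicitly what the paper leaves as a one-line remark.
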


\begin{proof}
The existence of a unitary matrix $T_{0}$ such that (\ref{f1_2b}) holds is
guaranteed by the assumption that $B$ is Hermitian and positive semidefinite.
If all $\widetilde{b}_{j}$ are pairwise different, then requirement
(\ref{f1_2c}) is void. If however several $\widetilde{b}_{j}$ are identical,
then one can rotate the corresponding subspaces in such a way that in addition
to (\ref{f1_2b}) also (\ref{f1_2c}) is satisfied.\smallskip
\end{proof}

Since the matrix $A-tB$ is Hermitian for $t\in\mathbb{R}_{+}$, there exists a
unitary matrix $T_{1}=T_{1}(t)$ such that%
\begin{equation}
T_{1}^{\ast}(A-t\,B)T_{1}=\operatorname*{diag}\left(  \lambda_{1}%
(t),\ldots,\lambda_{n}(t)\right)  .\label{f1_2d}%
\end{equation}
The $n$ functions $\lambda_{1},\ldots,\lambda_{n}$ in (\ref{f1_2d}) are
restrictions to $\mathbb{R}_{+}$ of branches of the solution $\lambda$ of the
polynomial equation%
\begin{equation}
g(\lambda,t):=\det\left(  \lambda\,I-(A-t\,B)\right)  =0,\label{f1_2e}%
\end{equation}
i.e., $\lambda_{j}$, $j=1,\ldots,n$, is a branch of the solution $\lambda$ if
the pair $(\lambda,t)=(\lambda_{j}(t),t)$ satisfies (\ref{f1_2e}) for each
$t\in\mathbb{C}$. The solution $\lambda$ is an algebraic function of degree
$n$ if the polynomial $g(\lambda,t)$ is irreducible, and it consists of
several algebraic functions otherwise. In the most extreme situation, the
polynomial $g(\lambda,t)$ can be factorized into $n$ linear factors, and this
is exactly the case when the two matrices $A$ and $B$ commute, which has been
discussed in Subsection \ref{sec1_3_1}.

In any case, the solution $\lambda$ of (\ref{f1_2e}) consists of one or
several multivalued functions of $t$ in $\mathbb{C}$, and the total number of
different branches $\lambda_{j},$ $j=1,\ldots,n$, is always exactly $n$. In
the next lemma, properties of the functions $\lambda_{j},$ $j=1,\ldots,n$, are
assembled, which are relevant for the formulation of Theorem \ref{sec1_thm2}.
The lemma will be proved in a slightly reformulated form as Lemma
\ref{sec2_Lem1} in Section \ref{sec2}.\smallskip

\begin{lemma}
\label{sec1_lem2}There exist $n$ different branches $\lambda_{j},$
$j=1,\ldots,n$, of the solution $\lambda$ of (\ref{f1_2e}). Each one can be
assumed to be analytic in a punctured neighborhood of infinity, none of them
has a branch point at infinity, and they can be numbered in such a way that we
have%
\begin{equation}
\lambda_{j}(t)\,=\,\widetilde{a}_{jj}-\widetilde{b}_{j}t+\text{O}\left(
1/t\right)  \text{ \ as \ }t\rightarrow\infty,\text{ \ }j=1,\ldots
,n,\label{f1_2f}%
\end{equation}
where the coefficients $\widetilde{a}_{jj},\widetilde{b}_{j}$, $j=1,\ldots,n
$, are elements of the matrices $\widetilde{A}$ and $\widetilde{B}%
$\ introduced in Lemma \ref{sec1_lem1}.\smallskip
\end{lemma}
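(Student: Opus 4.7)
My plan is to convert the question about the behavior of $\lambda$ at $t=\infty$ into a local question at $s=0$ via the substitution $s=1/t$, $\mu=\lambda/t$. Multiplying (\ref{f1_2e}) through by $s^{n}$ transforms the eigenvalue equation into
\[
h(\mu,s)\;:=\;\det\bigl(\mu\,I+B-s\,A\bigr)\;=\;0,
\]
a polynomial in $(\mu,s)$. Working in the $T_{0}$-basis of Lemma \ref{sec1_lem1} one has $h(\mu,0)=\prod_{j=1}^{n}(\mu+\widetilde{b}_{j})$. The desired statements about $\lambda$ at $t=\infty$ then translate into the assertions that the $n$ branches $\mu_{j}(s)$ with $\mu_{j}(0)\in\{-\widetilde{b}_{1},\dots,-\widetilde{b}_{n}\}$ are holomorphic at $s=0$ with Taylor expansions $\mu_{j}(s)=-\widetilde{b}_{j}+\widetilde{a}_{jj}\,s+\mathrm{O}(s^{2})$ for a suitable numbering, since setting $\lambda_{j}(t):=t\,\mu_{j}(1/t)$ then yields both the expansion (\ref{f1_2f}) and analyticity in a punctured neighborhood of $t=\infty$ with no branch point there.

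When the $\widetilde{b}_{j}$ are pairwise distinct, the discriminant of $h(\cdot,s)$ is nonzero at $s=0$ and the implicit function theorem directly produces $n$ holomorphic branches; the numbers $\widetilde{a}_{jj}$ appear as their first-order coefficients by the standard nondegenerate first-order formula. The real work is the degenerate case, where several branches share a common value $-\widetilde{b}_{j}$ at $s=0$ and are a priori only Puiseux series in some $\tau=s^{1/p}$. I would rule out any monodromy cycle of length $p\ge 2$ by Hermiticity: for $s\in\mathbb{R}$ the matrix $-\widetilde{B}+s\widetilde{A}$ is Hermitian, so its spectrum is real on both sides of $s=0$. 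But the leading nontrivial coefficient $c\,\tau^{k}$ of such a Puiseux branch, evaluated on the $p$-th roots of unity, would produce non-real values either for $s>0$ (if $p\ge 3$) or for $s<0$ (if $p=2$ and $c\in\mathbb{R}$), forcing $c=0$. Iterating on higher-order coefficients collapses the Puiseux series into an ordinary power series in $s$; this is essentially Rellich's theorem specialized to the finite-dimensional polynomial setting.

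With holomorphy in hand, I would identify $\mu_{j}'(0)$ by first-order degenerate perturbation theory: it is an eigenvalue of the compression $P\widetilde{A}P$, where $P$ is the spectral projection of $-\widetilde{B}$ onto the eigenspace of $-\widetilde{b}_{j}$. Condition (\ref{f1_2c}) of Lemma \ref{sec1_lem1} says exactly that this compression is the diagonal matrix with entries $\widetilde{a}_{ii}$ for the indices $i$ lying in the $\widetilde{b}$-block of $j$. Hence the multiset of first-order corrections matches $\{\widetilde{a}_{ii}\}_{i\text{ in block}}$, and a relabeling of the branches within each block, which the lemma statement permits, gives $\mu_{j}(0)=-\widetilde{b}_{j}$ and $\mu_{j}'(0)=\widetilde{a}_{jj}$ for every $j$, from which (\ref{f1_2f}) follows.

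The main obstacle is the second step above, the exclusion of Puiseux branch points in the degenerate case, since for a generic holomorphic matrix family such branch points do occur; it is the Hermiticity of $-\widetilde{B}+s\widetilde{A}$ for real $s$ that makes them vanish here. Apart from that, the proof is a synthesis of the implicit function theorem, first-order degenerate perturbation theory, and the block-diagonal structure guaranteed by Lemma \ref{sec1_lem1}.
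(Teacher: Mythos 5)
Your proposal is correct, but it follows a genuinely different route from the paper's. Both proofs begin with the same move, passing to the local variable $s=1/t$ and the auxiliary function $\mu=\lambda/t$ so that the eigenvalue problem becomes the Hermitian pencil $\mu I + \widetilde B - s\widetilde A$ near $s=0$; the divergence is in how the two proofs then establish holomorphy and extract the first-order coefficient. You invoke Rellich's theorem for one-parameter Hermitian families (with the Puiseux-elimination argument sketched as its mechanism) and then read off $\mu_j'(0)$ via first-order degenerate perturbation theory applied to the compression $P\widetilde A P$, which condition~(\ref{f1_2c}) renders diagonal. The paper instead argues more elementarily and in a more ad hoc fashion: the absence of a branch point at infinity is derived from the real-type property by a direct contradiction involving arguments of a cleaned-up Puiseux tail (the paper's Lemma~\ref{sec2_Lem3} adapted to $\infty$), and the expansion~(\ref{f1_2f}) is obtained through an explicit change of variables $w=1/(\lambda+b_1 t - a_{00})$, $u=1/t$, a diagonal scaling matrix $V(u)$, the Leibniz expansion of the determinant, and an application of Rouch\'e's theorem to locate the roots as $u\to 0$ — a computation that must then be iterated block by block over each distinct value of $\widetilde b_j$. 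Your version handles all blocks uniformly in one pass and is conceptually cleaner, at the cost of leaning on standard-but-nontrivial perturbation-theoretic facts (Rellich plus the formula that the first-order splitting is governed by $P\widetilde A P$); the paper keeps everything self-contained and elementary, at the cost of a longer computational argument. Both correctly locate the role of~(\ref{f1_2c}): in your framework it makes $P\widetilde A P$ diagonal in each block, while in the paper's computation it is what forces the off-diagonal entries of the scaled matrix within a block to vanish and hence lets the Leibniz product terms give an $\mathrm{O}(u)$ error. One small caution: in the Puiseux-elimination step you must, for $p=2$, check reality on \emph{both} sides of $s=0$, since a purely real leading coefficient $c$ is consistent with reality for $s>0$ but not for $s<0$, and conversely for purely imaginary $c$; your parenthetical ``$c\in\mathbb{R}$'' is shorthand for this two-sided check and should be spelled out if the argument is to be made rigorous.
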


With Lemmas \ref{sec1_lem1} and \ref{sec1_lem2} we are ready to formulate the
second theorem.\smallskip

\begin{theorem}
\label{sec1_thm2}For the measure $\mu_{A,B}$ in (\ref{f1_1b}) we have the
representation%
\begin{equation}
d\mu_{A,B}(t)\,=\,\sum_{j=1}^{n}e^{\widetilde{a}_{jj}}d\delta_{\widetilde
{b}_{j}}(t)+w_{A,B}(t)dt,\text{ \ \ }t\in\mathbb{R}_{+},\label{f1_3a}%
\end{equation}
with a density function $w_{A,B}$ that can be represented as%
\begin{equation}
w_{A,B}(t)\,=\,\sum_{\widetilde{b}_{j}<t}\frac{1}{2\pi i}\oint\nolimits_{C_{j}%
}e^{\lambda_{j}(\zeta)+t\,\zeta}d\zeta,\text{ \ \ for \ \ }t\in\mathbb{R}%
_{+},\label{f1_3b}%
\end{equation}
or equivalently as%
\begin{equation}
w_{A,B}(t)\,=\,-\,\sum_{\widetilde{b}_{j}>t}\frac{1}{2\pi i}\oint
\nolimits_{C_{j}}e^{\lambda_{j}(\zeta)+t\,\zeta}d\zeta,\text{ \ \ for
\ \ }t\in\mathbb{R}_{+},\label{f1_3c}%
\end{equation}
where each integration path $C_{j}$ is a positively oriented, rectifiable
Jordan curve in $\mathbb{C}$ with the property that the corresponding function
$\lambda_{j}$ is analytic on and outside of $C_{j}$. The values $\widetilde
{a}_{jj}$, $\widetilde{b}_{j}$, $j=1,\ldots,n$, have been introduced in Lemma
\ref{sec1_lem1}, and the functions $\lambda_{j}$, $j=1,\ldots,n$, in Lemma
\ref{sec1_lem2}.

The measure $\mu_{A,B}$ is positive, its support satisfies%
\begin{equation}
\operatorname*{supp}(\mu_{A,B})\subseteq\lbrack\widetilde{b}_{1},\widetilde
{b}_{n}],\smallskip\label{f1_3d}%
\end{equation}
and the density function $w_{A,B}$ is a restriction of an entire function in
each interval of $[\widetilde{b}_{1},\widetilde{b}_{n}]\diagdown
\{\widetilde{b}_{1},\ldots,\widetilde{b}_{n}\}$.
\end{theorem}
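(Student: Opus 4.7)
My plan is to obtain the representation (\ref{f1_3a}) from Lemma \ref{sec1_lem2} by essentially routine complex analysis, and then to isolate positivity of $w_{A,B}$ as the one substantial difficulty. For $t\geq 0$ I write $f(t)=\operatorname*{Tr}e^{A-tB}=\sum_{j=1}^{n}e^{\lambda_{j}(t)}$ with the branches $\lambda_{j}$ of Lemma \ref{sec1_lem2}, which satisfy $\lambda_{j}(\zeta)=\widetilde{a}_{jj}-\widetilde{b}_{j}\zeta+O(1/\zeta)$ at infinity, and set $I_{j}(s):=\frac{1}{2\pi i}\oint_{C_{j}}e^{\lambda_{j}(\zeta)+s\zeta}d\zeta$, a value independent of the admissible contour $C_{j}$ by Cauchy's theorem.

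The equivalence of (\ref{f1_3b}) and (\ref{f1_3c}) reduces to $\sum_{j=1}^{n}I_{j}(s)=0$. Choosing a common contour $C$ that encloses the branch points of all $\lambda_{j}$, this sum equals $\frac{1}{2\pi i}\oint_{C}\operatorname*{Tr}(e^{A-\zeta B})e^{s\zeta}d\zeta=0$, since the integrand is entire in $\zeta$. The support statement (\ref{f1_3d}) then follows, as (\ref{f1_3b}) is an empty sum for $t<\widetilde{b}_{1}$ and (\ref{f1_3c}) is an empty sum for $t>\widetilde{b}_{n}$. The analyticity of $w_{A,B}$ on each component of $(\widetilde{b}_{1},\widetilde{b}_{n})\setminus\{\widetilde{b}_{1},\ldots,\widetilde{b}_{n}\}$ is also immediate, since on such a component the index set in (\ref{f1_3b}) is constant and the integrand is entire in $t$.

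Next I would verify that the measure in (\ref{f1_3a}) realises $f$ as its Laplace transform, i.e.\ satisfies (\ref{f1_1a}). Placing $C_{j}$ inside the half-plane $\{\operatorname{Re}\zeta<t\}$ (possible once $t$ exceeds the radius of the branch locus of $\lambda_{j}$; the identity then extends to all $t\geq 0$ by analytic continuation), Fubini gives
\[
\int_{\widetilde{b}_{j}}^{\infty}I_{j}(s)e^{-ts}\,ds\,=\,\frac{1}{2\pi i}\oint_{C_{j}}\frac{e^{\lambda_{j}(\zeta)+\widetilde{b}_{j}(\zeta-t)}}{t-\zeta}\,d\zeta.
\]
Deforming $C_{j}$ outward to a large circle $C_{R}$ crosses the simple pole at $\zeta=t$ with residue $-e^{\lambda_{j}(t)}$; the integral over $C_{R}$ is computed via the residue at infinity, using $\lambda_{j}(\zeta)+\widetilde{b}_{j}\zeta=\widetilde{a}_{jj}+O(1/\zeta)$, to equal $-e^{\widetilde{a}_{jj}-\widetilde{b}_{j}t}$. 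Rearranging yields $\int_{\widetilde{b}_{j}}^{\infty}I_{j}(s)e^{-ts}ds=e^{\lambda_{j}(t)}-e^{\widetilde{a}_{jj}-\widetilde{b}_{j}t}$, and summing over $j$ gives precisely $\int_{0}^{\infty}e^{-ts}d\mu_{A,B}(s)=f(t)$ for the measure in (\ref{f1_3a}).

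The decisive obstacle is positivity. The argument above a priori furnishes only a signed measure, and the assertion $w_{A,B}\geq 0$ on $(\widetilde{b}_{1},\widetilde{b}_{n})$ is equivalent to the BMV conjecture itself (Theorem \ref{sec1_thm1}); this is where I expect the substantial work of the paper to live. A proof should require a much finer global study of the branches $\lambda_{j}$ — the geometry of their branch cuts, the way pairs of branches exchange across such a cut, and a collapse of the contour integral $I_{j}(t)$ onto its cuts — generalising the explicit $n=2$ computation behind the density (\ref{f1_1e}) of Proposition \ref{sec1_prop1}. The framework sketched above reduces BMV to this positivity statement, but does not on its own deliver it.
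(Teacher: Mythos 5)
The non-positivity parts of your proposal are correct and match Section~\ref{sec3} in substance: the equivalence of (\ref{f1_3b}) and (\ref{f1_3c}) via $\sum_j e^{\lambda_j(\zeta)+s\zeta}$ being entire (Lemma~\ref{sec3_lem1}); the support inclusion (\ref{f1_3d}) from the two empty sums; the piecewise analyticity of $w_{A,B}$; and the Laplace-transform check. Your version of that last check, organised per branch $j$ and closed with a residue at infinity, is a cleaner repackaging of the paper's computation (\ref{f3_2a})--(\ref{f3_2d}), which groups by interval $(b_k,b_{k+1})$ and tracks the remainders $r_k(\zeta)=\lambda_k(\zeta)+b_k\zeta-a_{kk}$ explicitly; the two are equivalent.

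The genuine gap is exactly where you flag it: positivity of $w_{A,B}$ is part of the statement of Theorem~\ref{sec1_thm2} and you do not prove it, so the theorem remains unproved. Your guess about the mechanism is also slightly off target: the paper (Section~\ref{sec5}) does not deform $C_j$ onto branch cuts of $\lambda_j$ in the $t$-plane, but works on the Riemann surface $\mathcal{R}_\lambda$ of the algebraic function $\lambda$, first under the assumption that $g(\lambda,t)$ is irreducible. For fixed $t\in(b_I,b_{I+1})$ one sets $D_\pm=\{\zeta\in\mathcal{R}_\lambda:\pm\operatorname*{Im}\pi_\lambda(\zeta)>0,\ \pm\operatorname*{Im}(\lambda(\zeta)+t\,\pi_\lambda(\zeta))>0\}$, $D=\operatorname*{Int}(\overline{D_+\cup D_-})$, and $\gamma=\partial D$. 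Because $\lambda$ is of real type, each component $\gamma_k$ of $\gamma$ is invariant under the antiholomorphic involution $\varrho$, and the integrand $e^{\lambda+t\pi_\lambda}$ is real-valued and strictly monotone along $\gamma_k$ between its two crossings of $\pi_\lambda^{-1}(\mathbb{R})$. An integration by parts (\ref{f5_2d4}) then yields $\frac{1}{2\pi i}\oint_{\gamma_k}e^{\lambda+t\pi_\lambda}\,d\zeta<0$, and Cauchy's theorem on $\mathcal{R}_\lambda$ identifies $\frac{1}{2\pi i}\oint_\gamma$ with $-\sum_{j\le I}\frac{1}{2\pi i}\oint_{C_j}e^{\lambda_j(\zeta)+t\zeta}\,d\zeta$, so $w_{A,B}(t)>0$. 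The reducible case is reduced to this one by factoring $g$ and applying the argument to each irreducible factor (Proposition~\ref{sec5_prop2}). None of this machinery appears in your proposal, and it is the actual content of the theorem.
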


Obviously, the non-negativity of the density function $w_{A,B}$ is, prima
vista, not evident from representation (\ref{f1_3b}) or (\ref{f1_3c}); its
proof will be the topic of Section \ref{sec5}.

The semi-explicit representation of the measure $\mu_{A,B}$ in Theorem
\ref{sec1_thm2} is of key importance for our strategy for a proof of the BMV
conjecture, but it probably possesses also independent value. In any case, it
already conveys some ideas about the nature of the solution.

\subsection{Outline of the Paper}

\qquad Theorem \ref{sec1_thm1} is practically a corollary of Theorem
\ref{sec1_thm2}, and the proof of Theorem \ref{sec1_thm2} is given\ in
Sections \ref{sec1b} through \ref{sec5b}.

We start in Section \ref{sec1b} with two technical assumptions, which simplify
the notation, but do not restrict the generality of the treatment. After that
in Section \ref{sec2} we compile and prove results concerning the solution
$\lambda$ of (\ref{f1_2e}) and the associated complex manifold $\mathcal{R}%
_{\lambda}$, which is the natural domain of definition for $\lambda$.

In Section \ref{sec3} all assertions in Theorem \ref{sec1_thm2} are proved,
except for the positivity of the measure $\mu_{A,B}$.

The proof of positivity of $\mu_{A,B}$ follows then in Section \ref{sec5}, and
everything concerning the proofs of the Theorems \ref{sec1_thm1} and
\ref{sec1_thm2} is summed up in Section \ref{sec5b}.

The proof of Proposition \ref{sec1_prop1} follows in Section \ref{sec6}.

\section{\label{sec1b}Technical Assumptions}

\noindent\textbf{Assumption 1}. \textit{Throughout Sections \ref{sec2} through
\ref{sec5b} we assume the matrices }$A$\textit{\ and }$B$\ to be given in the
form (\ref{f1_2b}) and (\ref{f1_2c}) of Lemma \ref{sec1_lem1}\textit{, i.e.,
we have}%
\begin{equation}
B=\operatorname*{diag}\left(  b_{1},\ldots,b_{n}\right)  \text{ \ with
\ }0\leq b_{1}\leq\cdots\leq b_{n}<\infty,\text{ \ and}\label{f1b_1a}%
\end{equation}%
\begin{equation}
a_{ij}=0\text{ \ \ for all\ \ \ }i,j=1,\ldots,n,i\neq j\text{ \ with \ }%
b_{i}=b_{j}.\label{f1b_1b}%
\end{equation}

\noindent\textbf{Assumption 2}. \textit{Further, we assume that}%
\begin{equation}
0\,<\,b_{1}\,\leq\,\cdots\,\leq\,b_{n},\label{f1b_2a}%
\end{equation}
\textit{i.e., the matrix }$B$\textit{\ is assumed to be positive
definite.}$\medskip$

Assumption 1 has the advantage that in the sequel we can write $a_{ij}$ and
$b_{j}$ instead of $\widetilde{a}_{ij}$ and $\widetilde{b}_{j}$,
$j=1,\ldots,n$.$\smallskip$

\begin{lemma}
\label{sec1b_lem1}The Assumptions 1 and 2 do not restrict the generality of
the proof of Theorems \ref{sec1_thm1} and \ref{sec1_thm2}.$\medskip$
\end{lemma}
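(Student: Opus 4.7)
The plan is to dispatch the two assumptions by two independent reductions, each exploiting an elementary invariance of the quantities appearing in Theorems \ref{sec1_thm1} and \ref{sec1_thm2}.

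For Assumption 1 I will appeal directly to Lemma \ref{sec1_lem1}: take the unitary $T_{0}$ produced there and set $\widetilde{A} := T_{0}^{\ast} A T_{0}$, $\widetilde{B} := T_{0}^{\ast} B T_{0}$. The trace identity
$$\operatorname{Tr} e^{A - tB} = \operatorname{Tr}\!\bigl(T_{0}^{\ast}\, e^{A - tB}\, T_{0}\bigr) = \operatorname{Tr} e^{\widetilde{A} - t \widetilde{B}}$$
shows that the function $f(t)$ of (\ref{f1_1a0}) is identical for the two pairs, and the defining polynomial $g(\lambda, t) = \det(\lambda I - (A - tB))$ of (\ref{f1_2e}) is unaffected by unitary conjugation, so the branches $\lambda_{j}$ of Lemma \ref{sec1_lem2}---together with the diagonal entries $\widetilde{a}_{jj}, \widetilde{b}_{j}$ they entail---are the same for $(A, B)$ as for $(\widetilde{A}, \widetilde{B})$. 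Consequently both the Laplace representation in Theorem \ref{sec1_thm1} and the semi-explicit formulas (\ref{f1_3a})--(\ref{f1_3c}) transcribe verbatim between the two pairs. After renaming $\widetilde{A} \mapsto A$, $\widetilde{B} \mapsto B$, Assumption 1 is in force.

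For Assumption 2 I will perturb $B$ by a scalar matrix. Given $(A, B)$ satisfying Assumption 1 (with possibly $b_{1} = 0$), fix any $\varepsilon > 0$ and set $B_{\varepsilon} := B + \varepsilon I$; a uniform shift does not disturb the ordering or the coincidence pattern of the $b_{j}$, so Assumption 1 persists, and $B_{\varepsilon}$ is now positive definite. Applying the theorems---assumed proved under both assumptions---to $(A, B_{\varepsilon})$ produces a positive measure $\mu_{A, B_{\varepsilon}}$ on $[b_{1} + \varepsilon, b_{n} + \varepsilon]$ of the stipulated form. Because $\varepsilon I$ commutes with everything, $\operatorname{Tr} e^{A - tB} = e^{t\varepsilon} \operatorname{Tr} e^{A - tB_{\varepsilon}}$, and pushing $\mu_{A, B_{\varepsilon}}$ forward under the translation $s' \mapsto s' - \varepsilon$ yields a positive measure $\mu$ on $[b_{1}, b_{n}] \subset [0, \infty)$ satisfying $\operatorname{Tr} e^{A - tB} = \int e^{-ts}\, d\mu(s)$, which is Theorem \ref{sec1_thm1} for $(A, B)$. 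For Theorem \ref{sec1_thm2} one observes that $A - t B_{\varepsilon} = (A - tB) - t \varepsilon I$, so the branches transform as $\lambda_{j}^{(\varepsilon)}(t) = \lambda_{j}(t) - \varepsilon t$; the contour integrand in (\ref{f1_3b}) thereby acquires only a factor $e^{-\varepsilon \zeta}$, and the point masses shift uniformly from $b_{j}$ to $b_{j} + \varepsilon$. Translating the entire measure back by $-\varepsilon$ then recovers (\ref{f1_3a})--(\ref{f1_3c}) exactly for $(A, B)$. Neither reduction poses a real obstacle---both are essentially bookkeeping that verifies the compatibility of the trace and the contour formulas with unitary conjugation and with scalar shifts---so the genuine difficulty of the paper will lie further on, in constructing the branches $\lambda_{j}$ on a global Riemann surface and in proving positivity of the density $w_{A, B}$.
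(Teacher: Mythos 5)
Your proof is correct and follows essentially the same route as the paper: Assumption~1 is dispatched by unitary invariance of the trace (and of the characteristic polynomial), and Assumption~2 by replacing $B$ with $B+\varepsilon I$ and observing that $f$ acquires the factor $e^{\varepsilon t}$, so the measure transforms by the translation $s\mapsto s-\varepsilon$. The only difference is that you spell out more explicitly how the branches $\lambda_j$ and the contour formulas transform under each reduction, which the paper leaves mostly implicit.
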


\begin{proof}
In Lemma \ref{sec1_lem1} it has been shown that there exists a similarity
transformation $M\mapsto T_{0}^{\ast}MT_{0}$ with $T_{0}$ a unitary matrix
such that any admissible pair of matrices $A$ and $B$ is transformed\ into
matrices $\widetilde{A}$ and $\widetilde{B}$ that have the special form of
(\ref{f1b_1a}) and (\ref{f1b_1b}). Since the trace of a matrix is invariant
under such similarity transformations, we have%
\[
f(t)=\operatorname*{Tr}e^{A-tB}=\operatorname*{Tr}T_{0}^{\ast}e^{A-tB}%
T_{0}=\operatorname*{Tr}e^{T_{0}^{\ast}AT_{0}-t\,T_{0}^{\ast}BT_{0}}%
\]
for all $t\in\mathbb{R}_{+}$, which shows that the function $f$ in
(\ref{f1_1a0}) remains invariant, and consequently the generality of the
proofs of Theorems \ref{sec1_thm1} and \ref{sec1_thm2} is not restricted by
Assumption 1.

If (\ref{f1b_2a}) is not satisfied, then the matrix $\widetilde{B}%
:=B+\varepsilon I=\operatorname*{diag}\left(  \widetilde{b}_{1},\ldots
,\widetilde{b}_{n}\right)  $ with $\varepsilon>0$ satisfies Assumption 2. We
have $\widetilde{b}_{j}=b_{j}+\varepsilon$, $j=1,\ldots,n$, and it follows
from (\ref{f1_1a0}) that%
\begin{equation}
\widetilde{f}(t):=\operatorname*{Tr}e^{A-t\widetilde{B}}=e^{-\varepsilon
t}\operatorname*{Tr}e^{A-tB}=e^{-\varepsilon t}f(t)\text{ \ for \ }%
t\geq0.\label{f1b_3a}%
\end{equation}

From (\ref{f1b_3a}) and the translation property of Laplace transforms, we
deduce that the measure $\mu_{A,B}$ in (\ref{f1_1a}) for the function $f$ is
the image of the measure $\mu_{A,\widetilde{B}}$ for the function
$\widetilde{f}$\ under the translation $t\mapsto t-\varepsilon$. Consequently,
the proofs of Theorems \ref{sec1_thm1} and \ref{sec1_thm2} for the matrices
$A$ and $\widetilde{B}$ carries over to the situation with the original
matrices $A$ and $B.$\
\end{proof}

\section{\label{sec2}Preparatory Results}

\qquad In the present section we compile some results and definitions that are
concerned with the solution $\lambda$ of the polynomial equation
(\ref{f1_2e}), and in addition we introduce a complex manifold $\mathcal{R}%
_{\lambda}$, which is the natural domain of definition of $\lambda$.

\subsection{\label{sec2_1}The Branch Functions $\lambda_{1},\ldots,\lambda
_{n}$}

\qquad The solution $\lambda$ of the polynomial equation (\ref{f1_2e}) is a
multivalued function with $n$ branches $\lambda_{j}$, $j=1,\ldots,n$, defined
in $\overline{\mathbb{C}}$. Each pair $(\lambda,t)=(\lambda_{j}(t),t)$ with
$t\in\overline{\mathbb{C}}$, $j=1,\ldots,n$, \ satisfies the equation%
\begin{equation}
0\,=\,g(\lambda,t)\,:=\,\det\left(  \lambda\,I-(A-t\,B)\right)  \,=\,g_{(1)}%
(\lambda,t)\cdots g_{(m)}(\lambda,t),\label{f2_1a2}%
\end{equation}
which is identical with (\ref{f1_2e}), only that we now have added the
polynomials $g_{(l)}(\lambda,t)\in\mathbb{C}\left[  \lambda,t\right]  $,
$l=1,\ldots,m$, which are assumed to be irreducible. If the polynomial
$g(\lambda,t)$ itself is irreducible, then we have $m=1,$ $g(\lambda
,t)=g_{(1)}(\lambda,t)$, and $\lambda$ is an algebraic function of order $n $.
Otherwise, in case $m>1$, $\lambda$ consists of $m$ algebraic functions
$\lambda_{(l)}$, $l=1,\ldots,m$, which are defined by the $m$ polynomial
equations%
\begin{equation}
g_{(l)}(\lambda_{(l)},t)\,=\,0,\text{ \ \ \ }l=1,\ldots,m.\label{f2_1a3}%
\end{equation}
Hence, $\lambda$ consists either of a single algebraic function or of several
such functions, depending on whether $g(\lambda,t)$ is irreducible or not. In
any case, the total number of branches $\lambda_{j}$ is always exactly $n$.

Obviously, for each $t\in\mathbb{C}$, the numbers $\lambda_{1}(t),\ldots
,\lambda_{n}(t)$ are eigenvalues of the matrix $A-t\,B$, as has already been
stated in (\ref{f1_2d}). Since $A-t\,B$ is an Hermitian matrix for
$t\in\mathbb{R}$, the restriction of each branch $\lambda_{j}$, $j=1,\ldots
,n$, to $\mathbb{R}$ is a real function.

From (\ref{f2_1a2}) and the Leibniz formula for determinants we deduce that%
\begin{equation}
g(\lambda,t)\,=\,\sum_{j=0}^{n}p_{j}(t)\,\lambda^{j}\label{f2_1a5}%
\end{equation}
with $p_{j}\in\mathbb{C}\left[  t\right]  $, $\deg p_{j}\leq n-j$ for
$j=0,\ldots,n$, $p_{n}\equiv1$, and $p_{n-1}(t)=t\,\operatorname*{Tr}%
(B)-\operatorname*{Tr}(A)$. If $m>1$, then we assume the polynomials $g_{(l)}$
normalized by%
\begin{equation}
g_{(l)}(\lambda,t)\,=\,\lambda^{n_{l}}\,+\,\text{lower terms in }%
\lambda,\text{ \ \ \ }l=1,\ldots,m,\label{f2_1a4}%
\end{equation}
and we have $n_{1}+\ldots+n_{m}=n$. In situations, where we have to deal with
individual algebraic functions $\lambda_{(l)}$,\ $l=1,\ldots,m$, which will,
however, not often be the case, we denote the elements of a complete set of
branches of the algebraic function $\lambda_{(l)},$ $l=1,\ldots,m,$ by
$\lambda_{l,i}$, $i=1,\ldots,n_{l}$. There exists an obvious one-to-one
correspondence $j:\,\{\,(l,i)$, $i=1,\ldots,n_{l}$, $l=1,\ldots
,m\,\}\,\longrightarrow\,\{\,1,\ldots,n\,\}$ such that the set of functions
$\{\,\lambda_{l,i}$, $i=1,\ldots,n_{l}$, $l=1,\ldots,m\,\}$ corresponds to
$\{\,\lambda_{j},j=1,\ldots,n\,\}$ bijectively.

It belongs to the nature of branches of a multi-valued function that their
domains of definition possesses a great degree of arbitrariness. Assumptions
for limiting this freedom will be addressed in Definition \ref{sec2_def2} in
the next subsection.

Since the solution $\lambda$ of (\ref{f2_1a2}) consists either of a single or
of several algebraic functions, it is obvious that $\lambda$ possesses only
finitely many branch points over $\overline{\mathbb{C}}$.

\begin{lemma}
\label{sec2_Lem2}All branches $\lambda_{j},j=1,\ldots,n$, of the solution
$\lambda$ of (\ref{f2_1a2}) can be chosen such that they are of real type,
i.e., any function $\lambda_{j}$, which is analytic in a domain $D_{0}%
\subset\mathbb{C}$, is also analytic in the domain $D_{0}\cup
\{\,z\,|\,\overline{z}\in D_{0}\,\}$, and we have $\lambda_{j}(\overline
{t})=\overline{\lambda_{j}(t)}$ for all $t\in D_{0}$.
\end{lemma}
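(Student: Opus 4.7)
\medskip
\noindent\textbf{Proof plan.} The plan hinges on the observation that $g(\lambda,t) = \det(\lambda I - (A-tB))$ has real coefficients in both variables. For every real $t$, the matrix $A - tB$ is Hermitian, so its characteristic polynomial in $\lambda$ has real coefficients. Writing $g(\lambda,t) = \sum_{j=0}^{n} p_j(t)\lambda^j$ with $p_j \in \mathbb{C}[t]$, this forces $p_j(\mathbb{R}) \subset \mathbb{R}$ and hence $p_j \in \mathbb{R}[t]$, so $g \in \mathbb{R}[\lambda, t]$. In particular one has the identity $g(\bar\lambda, \bar t) = \overline{g(\lambda, t)}$, and the algebraic set $\{g=0\}$ is invariant under the anti-holomorphic involution $(\lambda,t) \mapsto (\bar\lambda, \bar t)$.

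To set up a coherent system of branches I would first pick a non-branch point $t^{\ast} \in \mathbb{R}$; the discriminant of $g$ in $\lambda$ is a polynomial in $t$ with only finitely many real zeros, so such points are dense in $\mathbb{R}$. Since $A - t^{\ast}B$ is Hermitian, its eigenvalues $\lambda_1^{\ast},\ldots,\lambda_n^{\ast}$ are real. Next, choose a system of cuts $S \subset \mathbb{C}$ containing all branch points of $\lambda$, symmetric under complex conjugation, and such that $\Omega := \mathbb{C}\setminus S$ is connected and contains $t^{\ast}$. On $\Omega$ the equation $g(\lambda,t)=0$ admits $n$ single-valued analytic solutions, which I normalize by $\lambda_j(t^{\ast}) = \lambda_j^{\ast}$, $j=1,\ldots,n$; these serve as the initial branches.

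The central step is verifying the real-type property. For any domain $D_0 \subset \mathbb{C}$ on which (a continuation of) $\lambda_j$ is analytic, define $\tilde\lambda_j(t) := \overline{\lambda_j(\bar t)}$ on the mirror $\bar D_0 := \{t : \bar t \in D_0\}$. Then $\tilde\lambda_j$ is holomorphic and, by the reality of $g$,
\[
g(\tilde\lambda_j(t),t) \;=\; \overline{g(\lambda_j(\bar t), \bar t)} \;=\; 0,
\]
so $\tilde\lambda_j$ is itself a branch of $\lambda$. At $t=t^{\ast}$ we have $\tilde\lambda_j(t^{\ast}) = \overline{\lambda_j^{\ast}} = \lambda_j^{\ast}$, and the uniqueness of the normalization forces $\tilde\lambda_j \equiv \lambda_j$ on $\Omega$. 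When $\lambda_j$ is continued along a path $\gamma$ out of $\Omega$, $\tilde\lambda_j$ is simultaneously continued along the conjugate path $\bar\gamma$, so the identity $\tilde\lambda_j = \lambda_j$ persists by uniqueness of analytic continuation. This delivers both the domain symmetry $D_0 \cup \bar D_0$ and the reflection identity $\lambda_j(\bar t) = \overline{\lambda_j(t)}$.

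The one delicate point will be the monodromy bookkeeping: if $\lambda_j$ is continued around a branch point the result may be a different branch $\lambda_k$, and one must verify that the real-type labeling is consistent under such permutations. Because the branching locus is itself invariant under $t \mapsto \bar t$ (a direct consequence of $g \in \mathbb{R}[\lambda,t]$), monodromies along $\gamma$ and along $\bar\gamma$ induce conjugate permutations of the branch set, which is precisely what is needed for the real-type property to be preserved sheet-by-sheet.
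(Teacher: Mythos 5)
Your proposal is correct and follows essentially the same route as the paper: both rest on the identity $\overline{g(\lambda,t)}=g(\overline{\lambda},\overline{t})$ (coming from $A^{\ast}=A$ and $B$ real diagonal) together with the reality of the eigenvalues for real $t$, and then conclude by Schwarz reflection that $\overline{\lambda_j(\overline{t})}$ continues $\lambda_j$. You merely make explicit the anchoring at a real non-branch point and the conjugate-path monodromy bookkeeping, which the paper compresses into a single sentence.
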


\begin{proof}
The relation $\lambda_{j}(\overline{t})=\overline{\lambda_{j}(t)}$\ follows
from the identity%
\[
\overline{g(\lambda,t)}=\det\left(  \overline{\lambda}\,I-(\overline
{A}-\overline{t}\,B)\right)  =\det\left(  \overline{\lambda}\,I-(\overline
{A}^{t}-\overline{t}\,B)\right)  =g(\overline{\lambda},\overline{t}),
\]
which is a consequence of $\overline{A}^{t}=A^{\ast}=A$ and of $B$ being
diagonal. Since the restriction of $\lambda_{j}$ to $\mathbb{R}$ is real,
$\overline{\lambda_{j}(\overline{t})}$ is an analytic continuation of
$\lambda_{j}$ across $\mathbb{R}$.
\end{proof}

\begin{lemma}
\label{sec2_Lem3}The solution $\lambda$ of (\ref{f2_1a2}) has no branch points
over $\mathbb{R}$.
\end{lemma}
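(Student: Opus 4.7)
The plan is to argue by contradiction using a local Puiseux expansion at any putative real branch point, exploiting the fact that the eigenvalues of a Hermitian matrix are real.

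First I would record the elementary observation that for $t \in \mathbb{R}$ the matrix $A - tB$ is Hermitian, so all of its eigenvalues are real. Consequently each branch $\lambda_j$ takes only real values on the part of $\mathbb{R}$ where it is defined; this is the only input from linear algebra that the argument needs (and it is already implicit in the paragraph following (\ref{f2_1a3})).

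Suppose now, aiming for a contradiction, that some $t_0 \in \mathbb{R}$ is a branch point of $\lambda$, and let a cycle of length $p \geq 2$ be formed there by branches $\lambda_{j_1}, \ldots, \lambda_{j_p}$. The Puiseux theorem then supplies a function $\Lambda(\tau) = \lambda_0 + \sum_{m \geq 1} c_m \tau^m$ holomorphic in a neighborhood of $\tau = 0$ such that
$$\lambda_{j_k}(t) \,=\, \Lambda(\omega^{k-1} \tau_0(t)), \qquad k = 1, \ldots, p,$$
where $\omega = e^{2\pi i / p}$ and $\tau_0(t)$ is any fixed $p$-th root of $t - t_0$ on a punctured neighborhood of $t_0$. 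As $t$ ranges over a real neighborhood of $t_0$, the collection $\{\omega^{k-1} \tau_0(t) : k = 1, \ldots, p\}$ is precisely the set of $p$-th roots of the real number $t - t_0$, which as $t$ varies over both sides of $t_0$ fills out the $2p$ rays $e^{i\pi l/p} \mathbb{R}_{\geq 0}$, $l = 0, 1, \ldots, 2p-1$, in the $\tau$-plane.

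The reality constraint from the first step then forces $\Lambda(e^{i\pi l/p} \sigma) \in \mathbb{R}$ for every $l \in \{0, 1, \ldots, 2p-1\}$ and all sufficiently small $\sigma > 0$. Expanding in powers of $\sigma$ and matching coefficients yields $c_m e^{i\pi l m/p} \in \mathbb{R}$ for every $m \geq 1$ and every such $l$. Taking $l = 0$ gives $c_m \in \mathbb{R}$, and taking $l = 1$ then shows that $c_m = 0$ whenever $e^{i\pi m/p}$ is not real, i.e.\ whenever $p \nmid m$. Hence $\Lambda(\tau)$ is in fact a power series in $\tau^p = t - t_0$, which means that the supposed $p$-cycle is really a disjoint collection of $p$ branches each of which is holomorphic at $t_0$. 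This contradicts the assumption that $t_0$ is a branch point.

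The only genuinely delicate step is the Puiseux bookkeeping: one must track carefully the $2p$ rays that arise from the two sides of $t_0$ and confirm that the reality condition on the $p$ branches (for both $t > t_0$ and $t < t_0$) translates into reality of $\Lambda$ on all of them. An alternative would be to invoke the Rellich--Kato theorem on analytic diagonalisability of real-analytic Hermitian families, which directly produces holomorphic branches on $\mathbb{R}$; the self-contained Puiseux argument is, however, more in keeping with the rest of the paper and sharpens the statement at no cost.
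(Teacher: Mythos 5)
Your argument is correct and is essentially the paper's proof: both proceed by a local Puiseux expansion at the putative real branch point and exploit the reality of the branches on $\mathbb{R}$ (here obtained directly from the Hermitian structure of $A-tB$, in the paper from the real-type property of Lemma \ref{sec2_Lem2}) to kill the fractional-exponent Puiseux coefficients. The paper phrases the contradiction as an argument estimate on the leading fractional term after subtracting off the holomorphic part, while you match coefficients directly on the $2p$ rays coming from the two sides of $t_0$ -- the same mechanism, with somewhat cleaner bookkeeping.
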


\begin{proof}
The lemma is a consequence of the fact that the functions $\lambda
_{j},j=1,\ldots,n$, are of real type. We give an indirect proof, and assume
that $x_{0}\in\mathbb{R}$ is a branch point of order $k\geq1$ of a branch
$\lambda_{j}$, $j\in\{\,1,\ldots,n\,\}$, which we can assume to be analytic in
a slit neighborhood $V\diagdown\left(  i\mathbb{R}_{-}+x_{0}\right)  $ of
$x_{0}$. Using a local coordinate at $x_{0}$ leads to the function
$g(u):=\lambda_{j}(x_{0}+u^{k+1})$, which is analytic in a neighborhood of
$u=0$. Obviously, the function $g$ is also of real type. Let $l_{0}%
\in\mathbb{N}$ be the smallest index in the development $g(u)=\sum_{l}%
c_{l}u^{l}$ such that $c_{l_{0}}\neq0$ and $l_{0}\not \equiv 0$
$\operatorname{mod}(k+1)$, which means that there exists $0<l_{1}\leq k$ with
$l_{0}=m(k+1)+l_{1},$ $m\in\mathbb{N}$. Like $\lambda_{j}(z)=g((z-x_{0}%
)^{1/(k+1)})$, so also the modified function%
\[
\widetilde{\lambda}_{j}(z):=\left[  g((z-x_{0})^{1/(k+1)})-\sum_{l=0}%
^{m}c_{l(k+1)}(z-x_{0})^{l}\right]  \,(z-x_{0})^{-m}%
\]
has a branch point of order $k$ at $x_{0}$, and it is of real type. We have%
\[
\widetilde{\lambda}_{j}(z)=c_{l_{0}}\,(z-x_{0})^{l_{1}/(k+1)}\,+\,\text{O}%
((z-x_{0})^{(l_{1}+1)/(k+1)})\text{ \ \ as \ \ }z\rightarrow x_{0},
\]
and consequently for $r>0$ sufficiently small we have%
\[
\left\vert \arg\widetilde{\lambda}_{j}(x_{0}+r\,e^{it})-\arg c_{l_{0}}%
-\frac{l_{1}}{k+1}\,t\,\right\vert \,\leq\,\frac{\pi}{4(k+1)}\text{ \ for all
\ }0\leq t\leq\pi,
\]
which implies that%
\begin{equation}
0\,<\,\frac{l_{1}-1/2}{(k+1)}\pi\leq\left\vert \arg\widetilde{\lambda}%
_{j}(x_{0}+r)-\arg\widetilde{\lambda}_{j}(x_{0}-r)\right\vert \leq\frac
{l_{1}+1/2}{(k+1)}\pi<\pi.\text{ }\label{f2_1c}%
\end{equation}
Since the function $\widetilde{\lambda}_{j}$ is of real type, we have
$\arg\widetilde{\lambda}_{j}(x_{0}+r)\equiv0$ $\operatorname{mod}$ $\pi$ and
$\arg\widetilde{\lambda}_{j}(x_{0}-r)\equiv0$ $\operatorname{mod}$ $\pi$,
which contradicts (\ref{f2_1c}).\medskip
\end{proof}

Next, we investigate the behavior of the functions $\lambda_{j}$,
$j=1,\ldots,n$, in the neighborhood of infinity.

\begin{lemma}
\label{sec2_Lem1}Let $\lambda_{j}$, $j=1,\ldots,n$, denote $n$ different
branches of the solution $\lambda$ of (\ref{f2_1a2}). This system of branches
can be chosen in such a way that there exists a simply connected domain
$U_{\lambda}\subset\overline{\mathbb{C}}$ with $\infty\in U_{\lambda}$ such
that the following assertions hold true:

\begin{enumerate}
\item[(i)] Each function $\lambda_{j}$, $j=1,\ldots,n$, is defined throughout
$U_{\lambda}$, and none of them has a branch point in $U_{\lambda}$.

\item[(ii)] The $n$ functions $\lambda_{j}$, $j=1,\ldots,n$, can be enumerated
in such a way that at infinity we have%
\begin{equation}
\lambda_{j}(t)=a_{jj}-b_{j}t+\text{O}\left(  1/t\right)  \text{ \ \ as
\ \ }t\rightarrow\infty\label{f2_1a1}%
\end{equation}
with $a_{jj}$ and $b_{j}$, $j=1,\ldots,n$, the diagonal elements of the
matrices $A$ and $B$, respectively, of (\ref{f1b_1a}) and (\ref{f1b_1b}) in
Assumption 1.\smallskip
\end{enumerate}
\end{lemma}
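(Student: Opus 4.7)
The plan is to combine algebraicity of $g(\lambda,t)$, the Hermitian structure already exploited in Lemma \ref{sec2_Lem3}, and a degenerate perturbation argument at $\infty$ that uses Assumption 1 in an essential way. Since $g(\lambda,t)\in\mathbb{C}[\lambda,t]$, the solution $\lambda$ is algebraic and its discriminant with respect to $\lambda$ is a nonzero polynomial in $t$; hence $\lambda$ has only finitely many branch points in $\mathbb{C}$, and by Lemma \ref{sec2_Lem3} none of them lies on $\mathbb{R}$. Accordingly, if I can also show that $\infty$ is not a branch point and that the $n$ germs at $\infty$ admit the expansion (\ref{f2_1a1}), then I may take $U_{\lambda}:=\{\,t\in\overline{\mathbb{C}}:|t|>R\,\}$ with $R$ large enough to exclude all finite branch points; this $U_{\lambda}$ is a simply connected neighborhood of $\infty$ in $\overline{\mathbb{C}}$ and satisfies (i) and (ii).

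For the behavior at infinity I change variables to $s:=1/t$ and consider the pencil $M(s):=sA-B$, which is entire in $s$ and Hermitian for $s\in\mathbb{R}$. The eigenvalues of $A-tB$ are then $\lambda_{j}(t)=t\,\mu_{j}(1/t)$, where $\mu_{j}(s)$ are the eigenvalues of $M(s)$. At $s=0$ the matrix $M(0)=-B$ has the eigenvalues $-b_{j}$, grouped according to the distinct values $\beta_{1}<\cdots<\beta_{m}$ of $\{b_{1},\ldots,b_{n}\}$ into eigenspaces $E_{k}=\operatorname{span}\{\,e_{j}:b_{j}=\beta_{k}\,\}$. The crucial observation is that Assumption 1 (\ref{f1b_1b}) forces the restriction of $A$ to each $E_{k}$ to be diagonal with diagonal entries $a_{jj}$, $j$ in the block, so the first-order perturbation of $-B$ inside each eigenspace is already in diagonal form. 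Rellich's theorem for analytic self-adjoint families therefore yields $n$ real-analytic eigenvalue functions $\mu_{j}(s)$ near $s=0$ with
\[
\mu_{j}(s)\,=\,-b_{j}+a_{jj}\,s+\text{O}(s^{2}),\qquad j=1,\ldots,n;
\]
multiplying by $t$ and substituting $s=1/t$ gives precisely (\ref{f2_1a1}).

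Each $\mu_{j}$ extends from its real-analytic Taylor series to a holomorphic function on some disk $\{\,|s|<\varepsilon\,\}$, and the identity theorem applied to $\det(\mu_{j}(s)I-M(s))$ shows that this extension remains an eigenvalue of $M(s)$ for every complex $s$ in that disk. Hence $s=0$ is not a branch point of $\lambda$, or equivalently $t=\infty$ is not; in spirit this is the same mechanism that ruled out real branch points in Lemma \ref{sec2_Lem3}, since a genuine Puiseux expansion with non-integer exponent at the real point $s=0$ would be incompatible with the real-valuedness of $\mu_{j}$ on $\mathbb{R}$. Enlarging $R$ if necessary so that all $n$ branches extend single-valuedly throughout $\{\,|t|>R\,\}$ produces the required $U_{\lambda}$, and the enumeration in (ii) is fixed once and for all by the leading asymptotics.

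The main obstacle I anticipate is the degenerate case in which several diagonal entries $a_{jj}$ within one block $I_{k}:=\{\,j:b_{j}=\beta_{k}\,\}$ coincide: the first-order splitting inside $E_{k}$ is then incomplete and one must pass to higher-order perturbation theory to separate the corresponding branches. Rellich's theorem still supplies an analytic choice of eigenvalues along $\mathbb{R}$, but verifying that the enumeration is compatible with the expansion (\ref{f2_1a1}) is the delicate point. Assumption 1 is precisely what guarantees that the leading-order coefficients in (\ref{f2_1a1}) are the diagonal entries $a_{jj}$ themselves rather than eigenvalues of some non-diagonal sub-block of $A$, and it is this feature which I expect to be the technical crux of the proof.
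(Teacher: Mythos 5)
Your argument is correct, but it follows a genuinely different route from the paper. The paper stays entirely within elementary function theory and determinant manipulations: it rules out a branch point at infinity by the ``real type'' argument already used for Lemma \ref{sec2_Lem3}, then proves (\ref{f2_1a1}) in two steps --- first a growth estimate via the Leibniz formula forcing the polynomial part of each branch to be $c_{0}-c_{1}t$ with $c_{1}\in\{b_{1},\ldots,b_{n}\}$, then the substitution $u=1/t$, $w=1/(\lambda+b_{1}t-a_{00})$ together with the auxiliary determinant $\widetilde{g}(w,u)$ built from the diagonal matrix $V(u)$ and an application of Rouch\'e's theorem, iterated block by block over the distinct values of $b_{j}$. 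You instead pass to the Hermitian pencil $M(s)=sA-B$ and invoke Rellich's theorem on analytic self-adjoint families: the compressed first-order perturbation $P_{k}AP_{k}|_{E_{k}}$ is diagonal precisely because of Assumption 1, so every Rellich branch satisfies $\mu_{j}(s)=-b_{j}+a_{jj}s+\mathrm{O}(s^{2})$, and all blocks are handled simultaneously; single-valuedness of the holomorphically extended $\mu_{j}$ also disposes of the branch point at infinity. Your version is shorter and conceptually cleaner, at the price of importing Rellich's theorem as a black box, whereas the paper's longer argument is self-contained and its real-type machinery is reused elsewhere (Lemmas \ref{sec2_Lem2}, \ref{sec2_Lem3}, \ref{sec2_Lem5c}). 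Two small points: the discriminant of $g$ itself can vanish identically when $g$ has repeated irreducible factors (e.g.\ commuting $A,B$ with coinciding eigenvalue branches), so finiteness of the branch locus should be argued via the squarefree irreducible factors $g_{(l)}$ rather than via the discriminant of $g$; and the ``obstacle'' you flag at the end is not actually one --- coincident $a_{jj}$ within a block only affects higher-order splitting, while (\ref{f2_1a1}) asserts nothing beyond the first-order coefficients, whose multiset within each block equals the spectrum of the compressed perturbation regardless of further degeneracy.
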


\begin{remark}
Assumption 1 from Section \ref{sec1b} is decisive for the concrete form of
(\ref{f2_1a1}), and (\ref{f2_1a1}) is decisive for the verification of the
representation of the measure $\mu_{A,B}$ in Theorem \ref{sec1_thm2}, which
will follow in Subsection \ref{sec3_2} below. Notice that the similarity
transformation $(A,B)\mapsto(\widetilde{A},\widetilde{B})$ from Lemma
\ref{sec1_lem1} in general changes the diagonal elements $a_{jj},$
$j=1,\ldots,n $, of the matrix $A$, while it leaves the polynomial equation
(\ref{f2_1a2}) and also the branches $\lambda_{j}$, $j=1,\ldots,n$, invariant.
For an illustration of the changes of the $a_{jj},$ $j=1,\ldots,n$, one may
consult (\ref{f6_1a4}), where the simple case of $2\times2$ matrices has been analyzed.
\end{remark}

\begin{remark}
With Assumption 1 from Section \ref{sec1b} it is obvious that Lemma
\ref{sec1_lem2} in Section \ref{sec1_4} is a reformulation of Lemma
\ref{sec2_Lem1}.
\end{remark}

\begin{proof}
[Proof of Lemma \ref{sec2_Lem1}]We first prove that the solution $\lambda$ of
(\ref{f2_1a2}) has no branch point over infinity, which then leads to a proof
of assertion (i). The proof of assertion (ii) is more involved.\smallskip

\textit{Proof of (i)}: \ As in the proof of Lemma \ref{sec2_Lem3} we prove the
absence of a branch point at infinity indirectly, and assume that some
function $\lambda_{j}$, $j\in\{\,1,\ldots,n\,\}$, has a branch point of order
$k\geq1$ at infinity. The function $\lambda_{j}$ is of real type, and as a
branch of an algebraic function, it has at most polynomial growth for
$t\rightarrow\infty$. Hence, there exists $m_{0}\in\mathbb{N}$ such that the
function%
\[
\lambda_{0}(z)\,:=\,z^{m_{0}}\lambda_{j}(1/z)
\]
is bounded in a neighborhood of $x_{0}=0$. The function $\lambda_{0}$ is again
of real type, and it has a branch point of order $k\geq1$ at $x_{0}=0$.

After these preparations we can copy the reasoning in the proof of Lemma
\ref{sec2_Lem3} line by line in order to show that our assumption leads to a contradiction.

From equation (\ref{f2_1a2}) together with (\ref{f2_1a5}) we further deduce
that all $n$ functions $\lambda_{j}$, $j=1,\ldots,n$, are finite in
$\mathbb{C}$.

Since the solution $\lambda$ of (\ref{f2_1a2}) possesses only finitely many
branch points and none at infinity, the branches $\lambda_{1},\ldots
,\lambda_{n}$ can be chosen in such a way that there exists a punctured
neighborhood of infinity in which all $n$ functions $\lambda_{j}$,
$j=1,\ldots,n$, are defined and analytic, which concludes the proof of
assertion (i).

At infinity the functions $\lambda_{j}$, $j=1,\ldots,n$, may have a pole. In
the next part of the proof we shall see that this is indeed the case, and the
pole is always simple.\smallskip

\textit{Proof of (ii)}: \ The proof of (\ref{f2_1a1}) will be done in two
steps. In the first one we determine a condition that has to be satisfied by
the leading coefficient of the development of the function $\lambda_{j}$,
$j=1,\ldots,n$, at infinity.

Let $\lambda_{0}$ denote one of the functions $\lambda_{1},\ldots,\lambda_{n}%
$. From part (i) we know that there exists an open, simply connected
neighborhood $U_{0}\subset\overline{\mathbb{C}}$ of $\infty$ such that
$\lambda_{0}$ is analytic in $U_{0}\diagdown\{\infty\}$ and meromorphic in
$U_{0}$. Hence, $\lambda_{0}$ can be represented as%
\begin{equation}
\lambda_{0}\,=\,p\,+\,v\label{f2_1b2}%
\end{equation}
with $p$ a polynomial and $v$ a function analytic in $U_{0}$ with
$v(\infty)=0$. We will show that the polynomial $p$ is necessarily of the form%
\begin{equation}
p(t)\,=\,c_{0}-c_{1}t\text{ \ \ \ with \ \ \ }c_{1}\in\{\,b_{1},\ldots
,b_{n}\,\}.\label{f2_1b3}%
\end{equation}
The proof will be done indirectly, and we assume that%
\begin{equation}
\deg p\,\neq\,1\text{ \ \ or \ \ }p(t)=c_{0}-c_{1}t\text{\ \ with \ }%
c_{1}\notin\{\,b_{1},\ldots,b_{n}\,\}.\label{f2_1b4}%
\end{equation}
From (\ref{f2_1b4}) and the assumption made with respect to $v$ after
(\ref{f2_1b2}), it follows that%
\begin{equation}
\left\vert p(t)+b_{j}t-a_{jj}+v(t)\right\vert \rightarrow\infty\text{ \ \ as
\ }t\rightarrow\infty\text{ \ for each \ }j=1,\ldots,n.\label{f2_1b5}%
\end{equation}

From the definition of $g(\lambda,t)$ in (\ref{f2_1a2}) and the Leibniz
formula for determinants we deduce that%
\begin{align}
& g(\lambda_{0}(t),t)\,=\,\prod_{j=0}^{n}\left(  p(t)+b_{j}t-a_{jj}%
+v(t)\right)  \,+\,\label{f2_1b6}\\
& \text{ \ \ \ \ \ \ \ \ \ \ \ \ \ \ \ \ \ \ \ \ \ }+\,\text{O}\left(
\max_{j=1,\ldots,n}\left\vert p(t)+b_{j}t-a_{jj}+v(t)\right\vert
^{n-2}\right)  \text{ \ as \ }t\rightarrow\infty.\nonumber
\end{align}
Indeed, the product in (\ref{f2_1b6}) is built from the diagonal elements of
the matrix $\lambda_{0}(t)\,I-(A-t\,B)$, and any other term in the Leibniz
formula contains at least two off-diagonal elements as factors, which leads to
the error term in the second line of (\ref{f2_1b6}). From (\ref{f2_1b4}),
(\ref{f2_1b5}), and Assumption 2 in Section \ref{sec1b} we deduce that%
\[
\lim_{t\rightarrow\infty}\frac{\left\vert p(t)+b_{k}t-a_{kk}+v(t)\right\vert
}{\max_{j=1,\ldots,n}\left\vert p(t)+b_{j}t-a_{jj}+v(t)\right\vert
}\,>\,0\text{ \ \ \ for each \ \ }k=1,\ldots,n,
\]
which implies that%
\begin{equation}
\max_{j=1,\ldots,n}\left\vert p(t)+b_{j}t-a_{jj}+v(t)\right\vert ^{2-n}%
\prod_{j=0}^{n}\left\vert p(t)+b_{j}t-a_{jj}+v(t)\right\vert \rightarrow
\infty\label{f2_1b7}%
\end{equation}
as $t\rightarrow\infty$. From (\ref{f2_1b6}) together with (\ref{f2_1b5}) and
(\ref{f2_1b7}) it\ then follows that $g(\lambda_{0}(t),t)\rightarrow\infty$ as
$t\rightarrow\infty$. But this contradicts $g(\lambda_{0}(t),t)=0$ for $t\in
U_{0}$, and the contradiction proves the assertion made in (\ref{f2_1b3}%
).\medskip

We now come to the second step of the proof of (ii). Because of (\ref{f2_1b3})
we can make the ansatz%
\begin{align}
\lambda_{j}\,  & =\,p_{j}\,+\,v_{j}\text{ \ \ for \ \ }j=1,\ldots
,n,\medskip\label{f2_1c1}\\
p_{j}(t)\,  & =\,c_{0j}-c_{1j}t\text{ \ \ \ with \ \ \ }c_{1j}\in
\{\,b_{1},\ldots,b_{n}\,\},\nonumber
\end{align}
$v_{j}$ analytic in a neighborhood $U_{0}$ of infinity, and $v_{j}(\infty)=0
$. We shall show that the functions $\lambda_{1},\ldots,\lambda_{n}$ can be
enumerated in such a way that we have%
\[
c_{1j}\,=\,b_{j}\text{ \ \ and \ \ }c_{0j}=a_{jj}\text{\ \ \ for each
\ \ }j=1,\ldots,n,
\]
which proves (\ref{f2_1a1}).

A transformation of the variables $\lambda$ and $t$ into $w$ and $u$ is
introduced by%
\begin{equation}
u\,:=\,1/t\text{ \ \ and \ \ }w\,:=\,\frac{1}{\lambda+b_{1}t-a_{00}%
}\label{f2_1c2}%
\end{equation}
with%
\begin{equation}
a_{00}\,:=\,\min\left(  \{\,c_{11},\ldots,c_{1n}\,\}\cup\{\,b_{1},\ldots
,b_{n}\,\}\right)  \,-\,2.\label{f2_1c3}%
\end{equation}

From (\ref{f2_1c2}) it follows that%
\begin{equation}
\lambda\,=\,\frac{1}{w}-b_{1}t+a_{00}\,=\,\frac{1}{w}-\frac{b_{1}}{u}%
+a_{00}.\label{f2_1c4}%
\end{equation}

There exists an obvious one-to-one correspondence between the $n$ functions
$\lambda_{j}$, $j=1,\ldots,n$, and the $n$ functions%
\begin{equation}
w_{j}(u)\,:=\,\frac{1}{\lambda_{j}(1/u)+b_{1}/u-a_{00}},\text{ \ \ }%
j=1,\ldots,n\text{.}\label{f2_1c5}%
\end{equation}
The functions $w_{j}$, $j=1,\ldots,n$, are meromorphic in a neighborhood
$\widetilde{U}_{0}$ of the origin. From (\ref{f2_1c1}) and (\ref{f2_1c5}) we
deduce that%
\begin{equation}
w_{j}(0)\,=\,\left\{
\begin{array}
[c]{lll}%
0\medskip & \text{ \ for \ } & c_{1j}\neq b_{1}\\
\displaystyle\frac{1}{c_{0j}-a_{00}}\,\leq\,\frac{1}{2} & \text{ \ for \ } &
c_{1j}=b_{1},
\end{array}
\right. \label{f2_1c6}%
\end{equation}
and therefore we can choose $\widetilde{U}_{0}$ so small that%
\begin{equation}
0\,<\,|w_{j}(u)|\,\leq\,1\text{ \ \ for \ \ }u\in\widetilde{U}_{0}%
\diagdown\{0\},\label{f2_1c7}%
\end{equation}
which implies that all $w_{j}$, $j=1,\ldots,n$, are analytic in $\widetilde
{U}_{0}$.$\smallskip$

By $V(u)$, $u\in\mathbb{C}\diagdown\{0\}$, we denote the $n\times n$ diagonal
matrix%
\begin{equation}
V(u)\,:=\,\operatorname*{diag}(\,\underset{m_{1}}{\underbrace{1,\ldots,1}%
},\underset{n-m_{1}}{\underbrace{\sqrt{u},\ldots,\sqrt{u}}}\,),\label{f2_1c8}%
\end{equation}
where $m_{1}$ is the number of appearances of $b_{1}$ in the multiset
$\{\,b_{1},\ldots,b_{n}\,\}=\{\,b_{j},j=1,\ldots,n\,\}$, and we define%
\begin{equation}
\widetilde{g}(w,u)\,:=\,\det\left(  V(u)^{2}+w\,(B-b_{1}I)-w\,V(u)(A-a_{00}%
I)V(u)\right)  .\label{f2_1c9}%
\end{equation}
We then deduce that%
\begin{align}
\widetilde{g}(w,u)  & =\det\left(  V(u)\left(  I+\frac{w}{u}\,(B-b_{1}%
I)-w\,(A-a_{00}I)\right)  V(u)\right)  \medskip\nonumber\\
& =w^{n}u^{n-m_{1}}\det\left(  \frac{1}{w}I+\frac{1}{u}\,(B-b_{1}%
I)-(A-a_{00}I)\right)  \medskip\label{f2_1c10}\\
& =w^{n}u^{n-m_{1}}\det\left(  \left(  \frac{1}{w}-\frac{b_{1}}{u}%
+a_{00}\right)  \,I-\left(  A-\frac{1}{u}B\right)  \right)  \medskip
\nonumber\\
& =w^{n}u^{n-m_{1}}g(\lambda,\frac{1}{u}).\nonumber
\end{align}
Indeed, the first equality is obvious if we take into account that
$B-b_{1}I\,=\,\operatorname*{diag}(\,0,\ldots,0,b_{m_{1}+1}-b_{1},\ldots
,b_{n}-b_{1}\,)$ with exactly $m_{1}$ zeros in its diagonal. The next three
equations result from elementary transformations.

Directly from (\ref{f2_1c9}), but also from (\ref{f2_1a5}) and (\ref{f2_1c10})
together with (\ref{f2_1c4}) we deduce that $\widetilde{g}(w,u)$ is a
polynomial in $w$ and $u$, and is of order $n$ in $w$.

From (\ref{f2_1c9}) together with properties used in (\ref{f2_1c10}) and the
Leibniz formula for determinants it follows that%
\begin{align}
& \widetilde{g}(w,u)\,=\,\prod_{j=1}^{m_{1}}\left(  1-w(a_{jj}-a_{00})\right)
\prod_{j=m_{1}+1}^{n}\left(  u-w(b_{j}-b_{1})-w\,u(a_{jj}-a_{00})\right)
\times\medskip\nonumber\\
& \text{
\ \ \ \ \ \ \ \ \ \ \ \ \ \ \ \ \ \ \ \ \ \ \ \ \ \ \ \ \ \ \ \ \ \ \ \ \ \ \ \ \ \ \ \ \ \ \ \ \ \ \ \ \ \ \ }%
\times(1\,+\,\text{O}\left(  u\right)  )\text{ \ as \ }u\rightarrow
0.\label{f2_1c11}%
\end{align}
Indeed, the product in (\ref{f2_1c11}) is formed by the diagonal elements of
the matrix $M:=V(u)^{2}+w\,(B-b_{1}I)-w\,V(u)(A-a_{00}I)V(u)$, and the error
term O$\left(  u\right)  $ in the second line of (\ref{f2_1c11}) results from
the fact that each other term in the Leibniz formula includes at least two
off-diagonal elements of the matrix $M$ as factors. Each off-diagonal element
of $M$ contains the factor $\sqrt{u}$, or it is zero since from Assumption 1
in Section \ref{sec1b} it follows that for all elements $m_{ij}$ of
$M=(m_{ij})$ with $i,j=1,\ldots,m_{1},$ $i\neq j$, we have $m_{ij}%
=0$.$\medskip$

With (\ref{f2_1c11}) we are prepared to describe the behavior of the functions
$w_{1},\ldots,w_{n}$ near $u=0$, which then translates into a proof of the
first part of (\ref{f2_1a1}).

For each $u\in\mathbb{C}$ the $n$ values $w_{1}(u),\ldots,w_{n}(u)$ are the
zeros of the polynomial $\widetilde{g}(w,u)\in\mathbb{C}\left[  w\right]  $.
From (\ref{f2_1c11}) we know that%
\[
\widetilde{g}(w,u)\,\rightarrow\,w^{n-m_{1}}\prod_{j=1}^{m_{1}}\left(
1-w(a_{jj}-a_{00})\right)  \prod_{j=m_{1}+1}^{n}(b_{j}-b_{1})\text{ \ \ as
\ \ }u\rightarrow0.
\]
Therefore it follows by Rouch\'{e}'s Theorem that with an appropriate
enumeration of the functions $w_{j}$, $j=1,\ldots,n$, we have%
\begin{equation}
\lim_{u\rightarrow0}w_{j}(u)\,=\,\left\{
\begin{array}
[c]{lll}%
\displaystyle\frac{1}{a_{jj}-a_{00}}\medskip & \text{ \ for \ } &
j=1,\ldots,m_{1}\\
0 & \text{ \ for \ } & j=m_{1}+1,\ldots,n,
\end{array}
\right. \label{f2_1d1}%
\end{equation}
which is a concretization of (\ref{f2_1c6}). Since we know from (\ref{f2_1c7})
that all functions $w_{j}$, $j=1,\ldots,n$, are analytic in a neighborhood
$\widetilde{U}_{0}$ of the origin, it follows from (\ref{f2_1d1}) that%
\begin{equation}
w_{j}(u)\,=\,\frac{1}{a_{jj}-a_{00}}+\text{O}(u)\text{ \ \ as \ \ }%
u\rightarrow0\text{ \ for \ }j=1,\ldots,m_{1}.\label{f2_1d2}%
\end{equation}

From the correspondence (\ref{f2_1c5}) between the functions $w_{j}$,
$j=1,\ldots,n$, and $\lambda_{j}$, $j=1,\ldots,n$, it then follows from
(\ref{f2_1d2}) that%
\begin{align}
\lambda_{j}(t)  & =\frac{1}{w_{j}(1/t)}-b_{1}t+a_{00}\medskip\nonumber\\
& =a_{jj}-a_{00}-b_{1}t+a_{00}+\text{O}(\frac{1}{t})\medskip\label{f2_1d3}\\
& =a_{jj}-b_{j}t+\text{O}(\frac{1}{t})\text{ \ \ as \ \ }t\rightarrow
\infty\text{ \ for \ }j=1,\ldots,m_{1}.\nonumber
\end{align}
The last equation is a consequence of $b_{j}=b_{1}$ for $j=1,\ldots,m_{1}$.
With (\ref{f2_1d3}) we have proved relation (\ref{f2_1a1}) for $j=1,\ldots
,m_{1}$.

By the definition of $m_{1}$ and the ordering in (\ref{f1b_2a}) we have%
\[
b_{m_{1}+1}\,>\,b_{m_{1}}=\cdots=b_{1}.
\]
Let now $m_{2}$ denote the number of appearances of the value$\ b_{m_{1}+1}$
in the multiset $\{\,b_{j},j=1,\ldots,n\,\}$. In order to prove relation
(\ref{f2_1d3}) for $j=m_{1}+1,\ldots,m_{1}+m_{2}$, we repeat the analysis from
(\ref{f2_1c2}) until (\ref{f2_1d3}) with, $b_{1}$ replaced by $b_{m_{1}+1}$
and $m_{1}$ by $m_{2}$, which then leads to the verification of (\ref{f2_1d3})
for $j=m_{1}+1,\ldots,m_{1}+m_{2}$.

Repeating this cycle for each different value$\,b_{j}$ in the multiset
$\{\,b_{j},$ $\allowbreak j=1,\ldots,n\,\}$ proves relation (\ref{f2_1d3}) for
all $j=1,\ldots,n$, which completes the proof of (\ref{f2_1a1}), and concludes
the proof of assertion (ii).

We would like to add as a short remark that if all $b_{j}$, $j=1,\ldots,n$,
are pairwise different, then the analysis in these last cycles could be
considerably shortened since in such a case one could proceed rather directly
from (\ref{f2_1c6}) to the conclusion (\ref{f2_1d3}).\smallskip
\end{proof}

\subsection{\label{sec2_2}The Complex Manifold $\mathcal{R}_{\lambda}$}

\qquad If the polynomial $g(\lambda,t)$ in (\ref{f2_1a2}) is irreducible, then
the solution $\lambda$ of (\ref{f2_1a2}) is an algebraic function of order
$n$, and its natural domain of definition is a compact Riemann surface with
$n$ sheets over $\overline{\mathbb{C}}$ (cf. \cite[Theorem IV.11.4]%
{FarkasKra92}). We denote this surface by $\mathcal{R}_{\lambda}$.

If, however, the polynomial $g(\lambda,t)$ is reducible, then we have seen in
(\ref{f2_1a2}) and (\ref{f2_1a3}) that the solution $\lambda$ of
(\ref{f2_1a2}) consists of $m$ algebraic functions $\lambda_{(l)}$,
$l=1,\ldots,m$. Each $\lambda_{(l)}$ has a compact Riemann surface
$\mathcal{R}_{\lambda,l}$, $l=1,\ldots,m$, as its natural domain of
definition, and therefore the complex manifold%
\begin{equation}
\mathcal{R}_{\lambda}\,:=\,\mathcal{R}_{\lambda,1}\,\cup\cdots\cup
\,\mathcal{R}_{\lambda,m}\label{f2_2a1}%
\end{equation}
is the natural domain of definition for the multivalued function $\lambda$. In
each of the two cases, $\mathcal{R}_{\lambda}$ is a covering of $\overline
{\mathbb{C}}$ with exactly $n$ sheets, except that in the later case
$\mathcal{R}_{\lambda}$ is no longer connected. By $\pi_{\lambda}%
:\mathcal{R}_{\lambda}\longrightarrow\overline{\mathbb{C}}$ we denote the
canonical projection of $\mathcal{R}_{\lambda}$.

A collection of subsets $\left\{  S_{\lambda}^{(j)}\subset\mathcal{R}%
_{\lambda},\text{ }j=1,\ldots,n\right\}  $ is called a system of sheets on
$\mathcal{R}_{\lambda}$ if the following three requirements are
satisfied:\medskip

\noindent(i) \ \ The restriction $\left.  \pi_{\lambda}\right\vert
_{S_{\lambda}^{(j)}}:S_{\lambda}^{(j)}\longrightarrow\overline{\mathbb{C}} $
of the canonical projection $\pi_{\lambda}$ is a bijection for each
$j=1,\ldots,n$.\medskip

\noindent(ii) \ We have $\bigcup\nolimits_{j=1}^{n}S_{\lambda}^{(j)}%
=\mathcal{R}_{\lambda}$.\medskip

\noindent(iii) The interior points of each sheet $S_{\lambda}^{(j)}%
\subset\mathcal{R}_{\lambda}$, $j=1,\ldots,n$, form a domain. Different sheets
are disjoint except for branch points. A branch point of order $k\geq1$
belongs to exactly $k+1$ sheets.\medskip

Because of requirement (i) each sheet $S_{\lambda}^{(j)}$ can be identified
with $\overline{\mathbb{C}}$, however, formally we consider it as a subset of
$\mathcal{R}_{\lambda}$.

While the association of branch points and sheets is specified completely in
requirement (iii), there remains freedom with respect to the other boundary
points of the sheets. \ We assume that this association is done in a pragmatic
way. It is only required that each boundary point belongs to exactly one sheet
if it is not a branch point.

Requirement (i) justifies the notational convention that a point of
$S_{\lambda}^{(j)}$ is denoted by $t^{(j)}$ if $\pi_{\lambda}(t^{(j)}%
)=t\in\overline{\mathbb{C}}$.\smallskip

The requirements (i) - (iii) give considerable freedom for choosing a system
of sheets on $\mathcal{R}_{\lambda}$. In order to get unambiguity up to
boundary associations, we define a standard system of sheets by the following
additional requirement.\medskip

\noindent(iv) \ \ The cuts, which separate different sheets $S_{\lambda}%
^{(j)}$ in $\mathcal{R}_{\lambda}$, lie over lines in $\mathbb{C}$ that are
perpendicular to $\mathbb{R}$. Each cut is chosen in a minimal way. Hence, it
begins and ends with a branch point.\smallskip

\begin{lemma}
\label{sec2_Lem4}There exists a system of sheets $S_{\lambda}^{(j)}%
\subset\mathcal{R}_{\lambda}$, $j=1,\ldots,n$, that satisfies the requirements
(i) through (iv). Such a system is essentially unique, i.e., unique up to the
association of boundary points that are not branch points. The domain
$U_{\lambda}$ from Lemma \ref{sec2_Lem1} can be chosen in such a way that each
sheet $S_{\lambda}^{(j)}$, $j=1,\ldots,n$, of the standard system covers
$U_{\lambda}$, i.e., we have%
\begin{equation}
\pi_{\lambda}(\operatorname*{Int}(S_{\lambda}^{(j)}))\supset U_{\lambda
}.\label{f2_2a2}%
\end{equation}

\end{lemma}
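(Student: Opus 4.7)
The plan is to realize the cuts as explicit vertical segments joining pairs of branch points, to exploit the real-type symmetry from Lemma~\ref{sec2_Lem2} to show that the monodromy around every closed curve in the cut plane is trivial, and then to identify the sheets as the connected components of the resulting cut surface. I would begin by collecting the branch data: the set $B\subset\mathcal{R}_{\lambda}$ of branch points of $\lambda$ is finite, its projection $\pi_{\lambda}(B)$ lies in $\mathbb{C}\setminus\mathbb{R}$ by Lemmas~\ref{sec2_Lem1}(i) and~\ref{sec2_Lem3}, and this projection is symmetric under complex conjugation by Lemma~\ref{sec2_Lem2}. On each vertical line $L_{c}=c+i\,\mathbb{R}$ meeting $\pi_{\lambda}(B)$, I order the branch-point projections on $L_{c}$ by increasing imaginary part as $\zeta_{1}<\cdots<\zeta_{k}$ and set $\Sigma_{c}:=\bigcup_{i=1}^{k-1}[\zeta_{i},\zeta_{i+1}]$; finally $\Sigma:=\bigcup_{c}\Sigma_{c}$. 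By construction $\Sigma$ is a finite union of vertical segments each of whose endpoints is a branch-point projection and whose interior contains no further branch point, which is precisely requirement (iv).

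The central step---and the main technical obstacle---is to verify that every branch $\lambda_{j}$ continues single-valuedly and analytically across $\overline{\mathbb{C}}\setminus\Sigma$. Each connected component of $\Sigma$ is a full segment $[\zeta_{1},\zeta_{k}]$ on some $L_{c}$; by conjugation symmetry its endpoints relabel as $\overline{\eta_{r}},\ldots,\overline{\eta_{1}},\eta_{1},\ldots,\eta_{r}$. A positively oriented loop around this component, decomposed into small loops winding once around each enclosed branch point in order from bottom to top, realises the monodromy $\sigma_{\eta_{r}}\cdots\sigma_{\eta_{1}}\sigma_{\overline{\eta_{1}}}\cdots\sigma_{\overline{\eta_{r}}}$, where $\sigma_{\beta}$ denotes the permutation of the branches induced by a small positively oriented loop around $\beta$. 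By Lemma~\ref{sec2_Lem2}, reflecting the plane across $\mathbb{R}$ sends a positively oriented loop around $\eta_{i}$ to a negatively oriented loop around $\overline{\eta_{i}}$ while preserving the labels of the branches, so $\sigma_{\overline{\eta_{i}}}=\sigma_{\eta_{i}}^{-1}$; the product then telescopes to the identity. A loop enclosing several chain components of $\Sigma$ is homotopic to a composition of such individual loops and is therefore also trivial. The choice of $\Sigma$ is tuned precisely so that this telescoping is available.

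With trivial monodromy on $\overline{\mathbb{C}}\setminus\Sigma$, the preimage $\pi_{\lambda}^{-1}(\overline{\mathbb{C}}\setminus\Sigma)$ splits into $n$ connected open pieces $D^{(j)}$, each mapped bijectively by $\pi_{\lambda}$ onto $\overline{\mathbb{C}}\setminus\Sigma$; in the reducible case this argument is applied inside each algebraic component $\mathcal{R}_{\lambda,l}$ separately and the counts add correctly to $n_{1}+\cdots+n_{m}=n$. I take $S_{\lambda}^{(j)}$ to be the closure of $D^{(j)}$ in $\mathcal{R}_{\lambda}$, with each non-branch boundary point pragmatically assigned to a single sheet; requirements (i)--(iv) follow by construction. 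Essential uniqueness is then the observation that requirement (iv) pins down $\Sigma$ exactly: omitting a consecutive segment $[\zeta_{i},\zeta_{i+1}]$ would destroy the telescoping on a smaller loop and produce non-trivial monodromy, while extending a segment past a branch point would violate minimality. Hence the open sheets $D^{(j)}$ are uniquely determined, and only the pragmatic assignment of non-branch boundary points remains free. Finally, because $\Sigma$ is bounded, choosing any $R$ with $\Sigma\subset\{|z|<R\}$ and taking $U_{\lambda}:=\{t\in\overline{\mathbb{C}}:|t|>R\}$ yields a simply connected neighbourhood of $\infty$ disjoint from $\Sigma$ on which all branches are analytic, so $U_{\lambda}$ satisfies the conclusions of Lemma~\ref{sec2_Lem1} and $\pi_{\lambda}(D^{(j)})=\overline{\mathbb{C}}\setminus\Sigma\supset U_{\lambda}$, which is~(\ref{f2_2a2}).
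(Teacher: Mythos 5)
Your existence construction is sound and is a more explicit version of the paper's own proof, which uses the same essential ingredients (vertical cuts, the real-type symmetry from Lemma~\ref{sec2_Lem2}, the absence of real branch points from Lemma~\ref{sec2_Lem3}) but describes the sheets by continuation along vertical rays from $\infty$ and simply asserts that the resulting system is consistent; your telescoping monodromy computation supplies the verification that the paper leaves implicit.

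The uniqueness argument, however, has a genuine gap. You claim that omitting any consecutive segment $[\zeta_i,\zeta_{i+1}]$ from $\Sigma_c$ must produce non-trivial monodromy, but this is not forced. If, say, two upper-half-plane branch points $\eta_1,\eta_2$ on $L_c$ satisfy $\sigma_{\eta_2}=\sigma_{\eta_1}^{-1}$ (for instance, both are the same transposition), then the loop encircling the sub-chain $[\eta_1,\eta_2]$ already carries trivial monodromy, and by conjugation so does the loop around $[\overline{\eta_2},\overline{\eta_1}]$; the smaller cut set $\{[\overline{\eta_2},\overline{\eta_1}],\,[\eta_1,\eta_2]\}$, obtained by omitting your middle segment $[\overline{\eta_1},\eta_1]$, already renders all branches single-valued on its complement. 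Thus $\Sigma$ is not pinned down by requirement~(iv), and the open pieces $D^{(j)}$ genuinely depend on the choice. What remains true, and what the essential-uniqueness claim actually needs, is that the \emph{closed} sheets $\overline{D^{(j)}}$ are independent of whether such a monodromy-free segment belongs to $\Sigma$: the branches continue analytically across it, so both edges of each of its lifts lie in the same component $D^{(j)}$, and the lifted segment is interior to $\overline{D^{(j)}}$ in either case. The genuine cuts (segments across which branches actually jump) are then determined by the monodromy data together with requirement~(iii), and it is their determinacy, not that of $\Sigma$, that gives the essential uniqueness asserted in the lemma. As written, your argument establishes that your particular $\Sigma$ produces a valid system, not that the resulting system is forced.
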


\begin{proof}
From part (i) of Lemma \ref{sec2_Lem1} it is evident that there exist $n$
unramified subdomains in $\mathcal{R}_{\lambda}$ over the domain $U_{\lambda}%
$; they are given by the set $\pi_{\lambda}^{-1}(U_{\lambda})$. We can choose
$U_{\lambda}\subset\overline{\mathbb{C}}$ as a disc around $\infty$. Because
of Lemmas \ref{sec2_Lem2} and \ref{sec2_Lem3} it is then always possible to
start an analytic continuation of a given branch $\lambda_{j}$, $j=1,\ldots
,n$, at $\infty$ and continue along rays that are perpendicular to
$\mathbb{R}$ until one hits a branch point or the real axis. The earlier case
can happen only finitely many times. Each of these continuations then defines
a sheet $S_{\lambda}^{(j)}$, and the whole system satisfies the requirements
(i) through (iv), and also (\ref{f2_2a2}) is satisfied.\medskip
\end{proof}

Each system $\left\{  S_{\lambda}^{(j)}\subset\mathcal{R}_{\lambda},\text{
}j=1,\ldots,n\right\}  $ of sheets corresponds to a complete system of
branches $\lambda_{j}$, $j=1,\ldots,n$, of the solution $\lambda$ of
(\ref{f2_1a2}) if we define the functions $\lambda_{j}$ by%
\begin{equation}
\lambda_{j}:=\lambda\circ\pi_{t,j}^{-1},\text{ \ \ }j=1,\ldots
,n,\label{f2_2b1}%
\end{equation}
with $\pi_{\lambda,j}^{-1}$ denoting the inverse of $\left.  \pi_{\lambda
}\right\vert _{S_{\lambda}^{(j)}}$, which exists because of requirement (i).
If we use the standard system of sheets, then the branches $\lambda_{j} $,
$j=1,\ldots,n$, are uniquely defined functions.\smallskip

\begin{definition}
\label{sec2_def2}In the sequel we denote by $\lambda_{j}$, $j=1,\ldots,n$, the
$n$ branches of the solution $\lambda$ of equation (\ref{f2_1a2}) that are
defined by (\ref{f2_2b1}) with the standard system $\left\{  S_{\lambda}%
^{(j)}\right\}  $ of sheets.\smallskip
\end{definition}

The next Lemma is an immediate consequence of the Monodromy Theorem.

\begin{lemma}
\label{sec2_Lem5}Let $\lambda_{j}$, $j=1,\ldots,n$, be the functions from
Definition \ref{sec2_def2}. Then for any entire function $g$ the function%
\[
G(t)\,=\,\sum_{j=1}^{n}g(\lambda_{j}(t))\text{, \ \ \ \ }t\in\mathbb{C}%
\text{,}%
\]
is analytic and single-valued throughout $\mathbb{C}$.
\end{lemma}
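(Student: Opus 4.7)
The plan is to show that $G$ extends to an entire function by combining three observations: local analyticity on $\mathbb{C}$ off a finite exceptional set, single-valuedness via monodromy and symmetry, and removability of the remaining isolated singularities.

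First, I would invoke Lemma \ref{sec2_Lem1} to record that $\lambda$ has only finitely many branch points, whose $t$-projections form a finite set $T_0 \subset \mathbb{C}$, and that every $\lambda_j$ is finite-valued on $\mathbb{C}$. On $\mathbb{C}\setminus T_0$ each $\lambda_j$ is locally analytic, so $g\circ\lambda_j$ is locally analytic for the entire function $g$, and hence so is the finite sum $G(t)=\sum_{j=1}^{n}g(\lambda_{j}(t))$.

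Second, I would establish single-valuedness on $\mathbb{C}\setminus T_0$ by a monodromy and symmetry argument. Analytic continuation of the complete family $\{\lambda_1,\ldots,\lambda_n\}$ along any closed loop $\gamma\subset\mathbb{C}\setminus T_0$ permutes these $n$ functions, because for every $t$ the multiset $\{\lambda_j(t)\}_{j=1}^{n}$ is precisely the full root set of $g(\,\cdot\,,t)=0$ (in the reducible case (\ref{f2_1a2}), continuation preserves each irreducible factor $g_{(l)}$ and only permutes its $n_l$ branches among themselves, so globally we land in $S_{n_1}\times\cdots\times S_{n_m}\subset S_n$). Since the map $(x_1,\ldots,x_n)\mapsto \sum_j g(x_j)$ is symmetric in its arguments, $G$ is invariant under any such permutation, and the Monodromy Theorem then produces a single-valued analytic function $G$ on all of $\mathbb{C}\setminus T_0$.

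Third, I would remove the singularities at the points of $T_0$. Around any $t_0\in T_0$, the $n$ branches $\lambda_j$ remain bounded on a small punctured disc, because they are finite-valued on $\mathbb{C}$; continuity of the entire function $g$ then bounds $G$ on that punctured disc, and Riemann's removable singularity theorem extends $G$ analytically across $t_0$. Performing this at each of the finitely many points of $T_0$ produces the desired entire function on $\mathbb{C}$. I do not anticipate a genuine obstacle; the only detail to confirm carefully is that the monodromy action really takes values in $S_n$, which is automatic from the algebraic characterization of the $\lambda_j$ as the complete root set of $g(\,\cdot\,,t)=0$ and is exactly the content of the Monodromy Theorem in the algebraic-function setting.
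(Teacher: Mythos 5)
Your proposal is correct and uses the same underlying idea that the paper invokes: the paper offers no proof beyond the remark that the lemma is ``an immediate consequence of the Monodromy Theorem,'' and your argument is precisely the expansion of that remark. You correctly identify the three ingredients needed to make ``immediate'' into a proof: (a) local analyticity away from the finite set $T_0$ lying over the branch points (Lemma \ref{sec2_Lem1} guarantees this set is finite and that each $\lambda_j$ is finite on $\mathbb{C}$); (b) loop-invariance of $G$ on $\mathbb{C}\setminus T_0$ because analytic continuation permutes the complete root multiset $\{\lambda_j(t)\}$ of $g(\,\cdot\,,t)=0$ and $G$ is a symmetric function of its $n$ arguments; and (c) removability of the singularities at $T_0$, which follows because the $\lambda_j$ are roots of the monic polynomial $g(\lambda,t)$ with polynomial coefficients $p_j(t)$, hence locally bounded, so $G$ is locally bounded as well. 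Your aside about the reducible case (monodromy acting in $S_{n_1}\times\cdots\times S_{n_m}\subset S_n$) is also accurate and is consistent with the paper's decomposition (\ref{f2_1a2})--(\ref{f2_1a3}). One small point of wording: the classical Monodromy Theorem is stated for simply connected domains, whereas $\mathbb{C}\setminus T_0$ is not simply connected; what you are actually using is the slightly more general observation that if all loop-continuations return the same germ, then path-independence follows and the function is single-valued on the multiply connected domain. This is exactly the role your symmetry argument plays, so the proof is sound; it would just be cleaner to say ``by path-independence of continuation'' rather than attributing the conclusion to the Monodromy Theorem directly.
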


With the functions $\lambda_{j}$, $j=1,\ldots,n$, we get a very helpful
representation of the function $f$ from (\ref{f1_1a0}) and also of the
determinant $\det\left(  \zeta I-(A-tB)\right)  $.

\begin{lemma}
\label{sec2_Lem5a}With the functions $\lambda_{j}$, $j=1,\ldots,n$, from
Definition \ref{sec2_def2}, the function $f$ from (\ref{f1_1a0}) can be
represented as%
\begin{equation}
f(t)\,=\,\operatorname*{Tr}e^{A-tB}\,=\,\sum_{j=1}^{n}e^{\lambda_{j}(t)}\text{
\ \ for \ \ }t\in\mathbb{C}.\label{f2_2b2}%
\end{equation}
It follows from Lemma \ref{sec2_Lem5} that $f$ is an entire function.
\end{lemma}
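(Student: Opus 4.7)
The plan is to establish (\ref{f2_2b2}) first on the real axis, where the Hermitian structure of $A-tB$ makes it almost immediate, and then to propagate the identity to all of $\mathbb{C}$ by the identity principle.

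First I would handle $t\in\mathbb{R}$. Since $A$ and $B$ are Hermitian, so is $A-tB$ for real $t$, and the unitary $T_{1}(t)$ provided by (\ref{f1_2d}) diagonalizes it with diagonal entries $\lambda_{1}(t),\ldots,\lambda_{n}(t)$. Elementary functional calculus yields $e^{A-tB}=T_{1}(t)\operatorname*{diag}(e^{\lambda_{1}(t)},\ldots,e^{\lambda_{n}(t)})T_{1}(t)^{\ast}$, and taking the trace gives (\ref{f2_2b2}) for every $t\in\mathbb{R}$.

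Next I would extend to $t\in\mathbb{C}$ by analytic continuation. The left-hand side of (\ref{f2_2b2}) is entire in $t$ because the series $\sum_{k\geq0}(A-tB)^{k}/k!$ converges locally uniformly in the operator norm and the trace is a continuous linear functional. The right-hand side is precisely the function $G$ of Lemma \ref{sec2_Lem5} with the entire function $g=\exp$, hence also single-valued and entire on $\mathbb{C}$. Two entire functions that coincide on $\mathbb{R}$ must agree throughout $\mathbb{C}$, so (\ref{f2_2b2}) holds on the whole plane, and the final assertion of the lemma is then a direct restatement of what has just been established.

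The only point requiring attention is the very meaning of the right-hand side of (\ref{f2_2b2}): the individual branches $\lambda_{j}$ are globally defined only on the manifold $\mathcal{R}_{\lambda}$, and even with the standard sheet system of Lemma \ref{sec2_Lem4} they carry cuts in $\mathbb{C}$. The symmetric combination $\sum_{j=1}^{n}e^{\lambda_{j}(t)}$ nevertheless descends to an entire, single-valued function of $t$, which is exactly the content of Lemma \ref{sec2_Lem5} (an appeal to the Monodromy Theorem applied to a symmetric function of the $n$ branches). Without that observation the analytic-continuation step would have no legitimate target; with it, the remainder is a two-line reduction to the Hermitian case.
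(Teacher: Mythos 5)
Your proposal is correct, and it reaches the conclusion by a route that differs in one respect from the paper's. The paper does not restrict to the real axis at all: it observes that for every $t\in\mathbb{C}$ outside the finite set $V_{\lambda}$ where two branches collide, the matrix $A-tB$ has $n$ distinct eigenvalues $\lambda_{1}(t),\ldots,\lambda_{n}(t)$ and is therefore diagonalizable by an eigenbasis (not necessarily unitary), so the trace identity holds directly on $\mathbb{C}\setminus V_{\lambda}$, and the remaining finitely many points are filled in by continuity. You instead prove the identity only for real $t$, where Hermiticity gives a unitary diagonalization, and then invoke the identity theorem, using that both sides are entire --- the left side via the locally uniformly convergent exponential series, the right side via Lemma \ref{sec2_Lem5}. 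Both arguments lean on Lemma \ref{sec2_Lem5} for the single-valuedness and analyticity of $\sum_{j}e^{\lambda_{j}(t)}$ (the paper needs at least its continuity for the limiting step at $V_{\lambda}$), and you are right to flag this as the point that makes the right-hand side meaningful at all. Your version buys a slightly more elementary linear-algebra input (only the spectral theorem for Hermitian matrices, no discussion of diagonalizability of non-normal complex matrices), at the cost of invoking the identity principle; the paper's version is more direct and purely local. There is no gap in either.
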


\begin{proof}
From equation (\ref{f2_1a2}) it follows that for any $t\in\mathbb{C}$ the $n $
numbers $\lambda_{1}(t),\ldots,\lambda_{n}(t)$ are the eigenvalues of the the
matrix $A-t\,B$. Let $V_{\lambda}\subset\mathbb{C}$ be the set of all
$t\in\mathbb{C}$ such that not all $\lambda_{1}(t),\ldots,\lambda_{n}(t)$ are
pairwise different. This set is finite. For every $t\in\mathbb{C}\diagdown
V_{\lambda}$ the $n$ eigenvectors corresponding to $\lambda_{1}(t),\ldots
,\lambda_{n}(t)$ form an eigenbasis. The $n\times n$ matrix $T_{0}=T_{0}(t)$
with these vectors as columns satisfies%
\begin{equation}
T_{0}^{-1}(A-t\,B)T_{0}=\operatorname*{diag}\left(  \lambda_{1}(t),\ldots
,\lambda_{n}(t)\right)  .\label{f2_2b3}%
\end{equation}
Since the trace of a square matrix is invariant under similarity
transformations, (\ref{f2_2b2}) follows from (\ref{f2_2b3}) and (\ref{f1_1a0})
for $t\notin V_{\lambda}$, and by continuity for all $t\in\mathbb{C}$.\medskip
\end{proof}

\begin{lemma}
\label{sec2_Lem5b}With the functions $\lambda_{j}$, $j=1,\ldots,n$, from
Definition \ref{sec2_def2} we have%
\begin{equation}
\prod_{j=1}^{n}(\zeta-\lambda_{j}(t))\,=\,\det\left(  \zeta I-(A-tB)\right)
\text{ \ \ for \ \ }\zeta,t\in\mathbb{C}.\label{f2_2b4}%
\end{equation}

\end{lemma}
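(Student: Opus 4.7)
The identity is essentially the statement that the $\lambda_j(t)$ are the roots (with multiplicity) of the characteristic polynomial of $A-tB$, so my plan is to verify this first on a dense open subset of $t$ and then extend by continuity.

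First, I would fix $t\in\mathbb{C}$ and regard both sides of (\ref{f2_2b4}) as polynomials in $\zeta$. By the Leibniz expansion recorded in (\ref{f2_1a5}), the right-hand side $P_{t}(\zeta):=\det(\zeta I-(A-tB))$ is a monic polynomial of degree $n$ in $\zeta$. The defining relation (\ref{f2_1a2}) gives $P_{t}(\lambda_{j}(t))=g(\lambda_{j}(t),t)=0$ for each $j=1,\dots,n$, so every $\lambda_{j}(t)$ is a root of $P_{t}$.

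Next I would restrict $t$ to the complement of the finite exceptional set $V_{\lambda}\subset\mathbb{C}$ introduced in the proof of Lemma~\ref{sec2_Lem5a}, on which the values $\lambda_{1}(t),\dots,\lambda_{n}(t)$ are pairwise distinct. For such $t$ the monic polynomial $P_{t}$ of degree $n$ possesses the $n$ distinct roots $\lambda_{1}(t),\dots,\lambda_{n}(t)$, hence these are \emph{all} its roots, and consequently
\[
P_{t}(\zeta)\,=\,\prod_{j=1}^{n}(\zeta-\lambda_{j}(t))\qquad\text{for all }\zeta\in\mathbb{C},\ t\in\mathbb{C}\setminus V_{\lambda}.
\]

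Finally I would extend the identity to $t\in V_{\lambda}$ by continuity. For each fixed $\zeta$, the left-hand side is a polynomial (hence entire) in $t$. For the right-hand side, expanding
\[
\prod_{j=1}^{n}(\zeta-\lambda_{j}(t))\,=\,\sum_{k=0}^{n}(-1)^{k}\,e_{k}(\lambda_{1}(t),\dots,\lambda_{n}(t))\,\zeta^{n-k},
\]
each elementary symmetric function $e_{k}$ is a polynomial in the Newton power sums $\sum_{j}\lambda_{j}(t)^{\ell}$, and the latter are entire in $t$ by Lemma~\ref{sec2_Lem5} applied to $g(\lambda)=\lambda^{\ell}$. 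Hence the right-hand side is entire in $t$ as well. Two entire functions of $t$ agreeing on the open dense set $\mathbb{C}\setminus V_{\lambda}$ agree on $\mathbb{C}$, which proves (\ref{f2_2b4}).

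There is no serious obstacle here: the only subtlety is the passage through the exceptional set $V_{\lambda}$ where eigenvalues coalesce, and that is handled entirely by the continuity (really, the entirety) of the symmetric functions of the branches, already supplied by Lemma~\ref{sec2_Lem5}.
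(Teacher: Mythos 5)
Your proof is correct and follows essentially the same route as the paper: establish the factorization for $t$ outside the finite exceptional set $V_{\lambda}$, then extend to all of $\mathbb{C}$ by continuity. The only cosmetic difference is that the paper obtains the generic-case factorization by conjugating $\zeta I-(A-tB)$ with the eigenvector matrix $T_{0}$ from (\ref{f2_2b3}) and taking determinants, whereas you count roots of the monic characteristic polynomial; your explicit continuity argument via Lemma \ref{sec2_Lem5} and Newton's identities also spells out a step the paper leaves implicit.
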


\begin{proof}
From (\ref{f2_2b3}) we deduce that%
\[
T_{0}^{-1}\left(  \zeta I-(A-tB)\right)  T_{0}=\operatorname*{diag}\left(
\zeta-\lambda_{1}(t),\ldots,\zeta-\lambda_{n}(t)\right)
\]
for each $\zeta\in\mathbb{C}$ and $t\in\mathbb{C}\diagdown V_{\lambda}$, which
then proves (\ref{f2_2b4}).\medskip
\end{proof}

In the last lemma of the present section we lift the complex conjugation from
$\overline{\mathbb{C}}$ to $\mathcal{R}_{\lambda}$.

\begin{lemma}
\label{sec2_Lem5c}There exists a unique anti-holomorphic mapping
$\rho:\mathcal{R}_{\lambda}\longrightarrow\mathcal{R}_{\lambda}$ such that we
have%
\begin{equation}
\pi_{\lambda}\circ\rho(z)\,=\,\overline{\pi_{\lambda}(z)}\text{ \ \ for all
\ \ }z\in\mathcal{R}_{\lambda}\label{f2_2c1}%
\end{equation}
and that $\rho|_{\pi_{\lambda}^{-1}(\mathbb{R})}$ is the identity.
\end{lemma}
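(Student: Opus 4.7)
The plan is to lift the ordinary complex conjugation on $\overline{\mathbb{C}}$ to $\mathcal{R}_{\lambda}$ by exploiting that the defining polynomial $g(\lambda,t)=\det(\lambda I-(A-tB))$ has real coefficients. The identity $\overline{g(\lambda,t)}=g(\overline{\lambda},\overline{t})$ was already used in the proof of Lemma \ref{sec2_Lem2}. Consequently, the affine variety $V:=\{(\lambda,t)\in\mathbb{C}^{2}:g(\lambda,t)=0\}$ is invariant under the anti-holomorphic involution $\sigma:(\lambda,t)\mapsto(\overline{\lambda},\overline{t})$ of $\mathbb{C}^{2}$, and $\mathcal{R}_{\lambda}$ is nothing but the desingularized projective completion of $V$; the task reduces to checking that $\sigma$ extends uniquely from $V$ to this completion.

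First I would define $\rho$ away from the branch locus. Let $z\in\mathcal{R}_{\lambda}$ lie over a non-branch value $t_{0}=\pi_{\lambda}(z)\in\mathbb{C}$; then $t$ is a local coordinate at $z$ and some branch $\lambda_{j}$ is analytic in a neighborhood with $\lambda_{j}(t_{0})$ being the $\lambda$-coordinate of $z$. Using that the $\lambda_{j}$ can be chosen of real type (Lemma \ref{sec2_Lem2}), the function $\widetilde{\lambda}(t):=\overline{\lambda_{j}(\overline{t})}$ is an analytic branch of $\lambda$ near $\overline{t_{0}}$, and I set $\rho(z)$ to be the point of $\mathcal{R}_{\lambda}$ over $\overline{t_{0}}$ determined by this branch. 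In the local coordinates $t$ and $\overline{t}$ on the two sheets, $\rho$ is simply $t\mapsto\overline{t}$, which is anti-holomorphic. Next I would extend $\rho$ across the remaining points. By Lemma \ref{sec2_Lem3} no branch point lies over $\mathbb{R}$ and by Lemma \ref{sec2_Lem1}(i) none lies at $\infty$, so every branch point $z_{0}$ over $t_{0}\notin\mathbb{R}$ has its image at a genuine conjugate branch point over $\overline{t_{0}}\neq t_{0}$ of the same order, and the local coordinates $(t-t_{0})^{1/(k+1)}$ and $(\overline{t}-\overline{t_{0}})^{1/(k+1)}$ make the continuous extension anti-holomorphic. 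The point over $\infty$ is handled analogously through the coordinate $1/t$. If $g$ is reducible, then $\mathcal{R}_{\lambda}=\bigcup_{l}\mathcal{R}_{\lambda,l}$; since each factor $\lambda_{(l)}$ takes real values on $\mathbb{R}$, the real-type argument of Lemma \ref{sec2_Lem2} applied to $\lambda_{(l)}$ shows $\overline{\lambda_{(l)}(\overline{t})}$ analytically continues $\lambda_{(l)}$, so $\rho$ sends each $\mathcal{R}_{\lambda,l}$ into itself. The property $\pi_{\lambda}\circ\rho=\overline{\pi_{\lambda}}$ is built into the construction, and for $t\in\mathbb{R}$ every eigenvalue $\lambda_{j}(t)$ of the Hermitian matrix $A-tB$ is real, so $\widetilde{\lambda}(t)=\lambda_{j}(t)$ and $\rho$ fixes the fiber $\pi_{\lambda}^{-1}(\mathbb{R})$.

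The main obstacle is uniqueness, because $\rho$ is prescribed only on the totally real curve $\pi_{\lambda}^{-1}(\mathbb{R})$. I would argue it as follows: if $\rho_{1}$ and $\rho_{2}$ both satisfy the two conclusions, then for any $z\in\pi_{\lambda}^{-1}(\mathbb{R})$, in a local chart around $z$ given by the coordinate $t$ (no branch point by Lemma \ref{sec2_Lem3}), the maps $\overline{\rho_{1}}$ and $\overline{\rho_{2}}$ are holomorphic $\mathcal{R}_{\lambda}$-valued functions coinciding on a real interval, hence identical on a neighborhood of $z$. Since each connected component of $\mathcal{R}_{\lambda}$ meets $\pi_{\lambda}^{-1}(\mathbb{R})$ (the restrictions of the branches $\lambda_{j}$ to $\mathbb{R}$ are real, giving real points on every component), analytic continuation propagates the equality $\rho_{1}=\rho_{2}$ over all of $\mathcal{R}_{\lambda}$. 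A small amount of bookkeeping is required at branch points and between different components $\mathcal{R}_{\lambda,l}$, but the argument introduces no new ideas beyond the ones already in play.
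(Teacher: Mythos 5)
Your proof is correct, and the two key ideas --- the conjugation symmetry $\overline{g(\lambda,t)}=g(\overline{\lambda},\overline{t})$ (equivalently, the real-type property from Lemma \ref{sec2_Lem2}) for existence, and analytic continuation off the one-real-dimensional fixed set $\pi_{\lambda}^{-1}(\mathbb{R})$ for uniqueness --- are exactly those used in the paper. The constructions of $\rho$ differ in packaging: the paper defines $\rho$ sheet-by-sheet, using the standard system of sheets from Lemma \ref{sec2_Lem4} whose cuts are perpendicular to $\mathbb{R}$, so that each $\pi_{\lambda}(S_{\lambda}^{(j)})$ is conjugation-invariant and $\rho$ can simply be the transplanted conjugation on each $S_{\lambda}^{(j)}$; you instead build $\rho$ from germs of branches, first away from the branch locus and then by explicit local extensions at branch points (using Lemmas \ref{sec2_Lem3} and \ref{sec2_Lem1}(i) to ensure these sit over $\mathbb{C}\setminus\mathbb{R}$) and at $\infty$. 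The paper's version is terser; yours is more explicit about the anti-holomorphic extension at branch points, and it makes precise two things the paper leaves implicit: that $\rho$ maps each component $\mathcal{R}_{\lambda,l}$ into itself when $g$ is reducible, and that $\pi_{\lambda}^{-1}(\mathbb{R})$ meets every component, which the uniqueness step genuinely needs since $\mathcal{R}_{\lambda}$ may be disconnected. One small notational infelicity in your uniqueness argument: $\overline{\rho_{1}}$ and $\overline{\rho_{2}}$ are not intrinsically defined, since $\mathcal{R}_{\lambda}$ carries no ambient conjugation to apply to the values; the paper's device of comparing the two holomorphic self-maps $\rho_{1}\circ\rho_{1}$ and $\rho_{1}\circ\rho_{2}$, each equal to the identity on $\pi_{\lambda}^{-1}(\mathbb{R})$ and hence (by the identity theorem, once one knows every component is reached) equal to the identity everywhere, expresses the same idea without that ambiguity.
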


\begin{proof}
We start with the problem of existence. Because of requirement (i) of the
standard system of sheets $\{S_{\lambda}^{(j)}\}$ on $\mathcal{R}_{\lambda} $,
we can define $\rho$ on each $S_{\lambda}^{(j)},j=1,\ldots,n$, by a direct
transfer of the complex conjugation from $\overline{\mathbb{C}}$ to
$S_{\lambda}^{(j)}$. Notice that each $\pi_{\lambda}(S_{\lambda}%
^{(j)}),j=1,\ldots,n$, is invariant under complex conjugation because of
requirement (iv) and since each $\lambda_{j}$ is of real type. It is not
difficult to see that this piecewise definition of $\rho$ is well defined
throughout $\mathcal{R}_{\lambda}$, and possesses the required properties.

The uniqueness of $\rho$ is a consequence of the fact that $\rho
|_{\pi_{\lambda}^{-1}(\mathbb{R})}$ is the identity map. Indeed, let $\rho
_{1}$ and $\rho_{2}$ be two maps with the required properties. Then $\rho
_{1}\circ\rho_{1}$ and $\rho_{1}\circ\rho_{2}$ are both analytic maps from
$\mathcal{R}_{\lambda}$ to $\mathcal{R}_{\lambda}$. On $\pi_{\lambda}%
^{-1}(\mathbb{R})$ both maps are the identity, and consequently $\rho_{1}%
\circ\rho_{1}$ and $\rho_{1}\circ\rho_{2}$ are both the identity map on
$\mathcal{R}_{\lambda}$, which proves $\rho_{1}=\rho_{2}$.
\end{proof}

\section{\label{sec3}First Part of the Proof of Theorem \ref{sec1_thm2}}

\qquad In the present section we prove all assertions of Theorem
\ref{sec1_thm2} except for the positivity of the measure $\mu_{A,B}$, which
will be the topic of the next section.

\subsection{\label{sec3_1}Equivalence of (\ref{f1_3b}) and (\ref{f1_3c})}

\begin{lemma}
\label{sec3_lem1}For each $t>0$ we have%
\begin{equation}
\sum_{j=1}^{n}\frac{1}{2\pi i}\oint\nolimits_{C_{j}}e^{\lambda_{j}\left(
\zeta\right)  +t\,\zeta}d\zeta\,=\,0\label{f3_1a}%
\end{equation}
with $C_{j}$ and $\lambda_{j}$ as specified in Theorem \ref{sec1_thm2}.
\end{lemma}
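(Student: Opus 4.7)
The plan is to collapse all $n$ contour integrals into a single integral over a common large circle and then to exploit Lemma \ref{sec2_Lem5a}, which states that $\sum_{j=1}^{n} e^{\lambda_j(\zeta)} = \operatorname{Tr} e^{A-\zeta B}$ is an entire function of $\zeta$.

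First, I would pick $R>0$ large enough that the circle $C_R := \{|\zeta| = R\}$ contains every curve $C_j$ in its interior. Since the solution $\lambda$ of (\ref{f2_1a2}) is algebraic and has only finitely many branch points in $\mathbb{C}$ (Lemma \ref{sec2_Lem1}), such an $R$ certainly exists. By hypothesis each $\lambda_j$ is analytic on and outside of $C_j$, and hence in particular single-valued and holomorphic on the closed annulus bounded by $C_j$ and $C_R$. Applying Cauchy's theorem to the holomorphic integrand $\zeta \mapsto e^{\lambda_j(\zeta)+t\zeta}$ on that annulus, I would replace each original integral by one over the common circle:
\[
\frac{1}{2\pi i}\oint_{C_j} e^{\lambda_j(\zeta)+t\zeta}\,d\zeta \;=\; \frac{1}{2\pi i}\oint_{C_R} e^{\lambda_j(\zeta)+t\zeta}\,d\zeta, \qquad j=1,\ldots,n.
\]

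Next I would sum over $j$ and interchange the finite sum with the integral, obtaining
\[
\sum_{j=1}^{n}\frac{1}{2\pi i}\oint_{C_j} e^{\lambda_j(\zeta)+t\zeta}\,d\zeta \;=\; \frac{1}{2\pi i}\oint_{C_R} e^{t\zeta}\sum_{j=1}^{n} e^{\lambda_j(\zeta)}\,d\zeta.
\]
By Lemma \ref{sec2_Lem5a} the bracketed sum equals $\operatorname{Tr} e^{A-\zeta B}$, which is entire in $\zeta$. Consequently $\zeta \mapsto e^{t\zeta}\operatorname{Tr} e^{A-\zeta B}$ is entire, and Cauchy's theorem forces its integral around the closed curve $C_R$ to vanish, which is exactly (\ref{f3_1a}). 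Notice that $t>0$ plays no actual role in the argument; the identity holds for every $t \in \mathbb{C}$.

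The one point I would be most careful about is the deformation step. One is not free to write down $\oint_{C_R} e^{\lambda_j(\zeta)+t\zeta}\,d\zeta$ as a stand-alone object unless that particular branch $\lambda_j$ is defined and single-valued on a neighbourhood of $C_R$; a single branch need not extend analytically across the branch points enclosed inside $C_j$. The deformation from $C_j$ outward to $C_R$ takes place inside the unbounded component of $\mathbb{C}\setminus C_j$, where by hypothesis $\lambda_j$ \emph{is} analytic and single-valued, so the replacement is legitimate. After this, the proof is just the observation that the $j$-th integrand individually is problematic but the symmetric combination $\sum_j e^{\lambda_j}$ is entire---the standard trick by which symmetric functions of branches of an algebraic function become single-valued.
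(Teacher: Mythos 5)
Your proof is correct and follows essentially the same route as the paper: collapse the $n$ contours onto a common curve (you deform each $C_j$ to a large circle $C_R$; the paper simply chooses a single common curve $C$ from the outset), interchange sum and integral, and invoke the entirety of $\sum_j e^{\lambda_j(\zeta)}$ (the paper cites Lemma \ref{sec2_Lem5} directly, you cite its consequence Lemma \ref{sec2_Lem5a}) together with Cauchy's theorem. Your closing remark that the identity holds for all $t\in\mathbb{C}$ is also correct and implicit in the paper's argument.
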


\begin{proof}
From Lemma \ref{sec2_Lem1} it is obvious that we can choose all $C_{j}$,
$j=1,\ldots,n$, to be identical with a single curve $C\subseteq\mathbb{C}$
such that all $\lambda_{1},\ldots,\lambda_{n}$ are analytic on and outside of
$C$. We interchange summation and integration in (\ref{f3_1a}), and deduce
from Lemma \ref{sec2_Lem5} that $\sum_{j=1}^{n}e^{\lambda_{j}\left(
\zeta\right)  +t\,\zeta}\,=\,e^{t\,\zeta}\sum_{j=1}^{n}e^{\lambda_{j}\left(
\zeta\right)  }$ is an entire function, which proves (\ref{f3_1a}).
\end{proof}

From (\ref{f3_1a}) it follows immediately that the representations
(\ref{f1_3b}) and (\ref{f1_3c}) in Theorem \ref{sec1_thm2} for the density
function $w_{A,B}$ are equivalent.

\subsection{\label{sec3_2}Proof of (\ref{f1_3a}), (\ref{f1_3b}), and
(\ref{f1_3c})}

\qquad We use (\ref{f1_3a}) and (\ref{f1_3b}) in Theorem \ref{sec1_thm2} as an
ansatz for a measure $\mu_{A,B}$ and show by direct calculations that this
measure satisfies (\ref{f1_1a}).

From (\ref{f1_3b}) it is evident that $w_{A,B}(t)=0$ for $0\leq t<b_{1}$; and
since we know from the last subsection that (\ref{f1_3b}) and (\ref{f1_3c})
are equivalent representations, we further deduce from (\ref{f1_3c}) that also
$w_{A,B}(t)=0$ for $t>b_{n}$. From (\ref{f1_3a}) and (\ref{f1_3b}) we then get%
\begin{equation}
\int e^{-t\,s}d\mu_{A,B}(s)\,=\,\sum_{j=1}^{n}e^{a_{jj}}e^{-t\,b_{j}}%
+\sum_{k=1}^{n-1}I_{k}(t)\text{ \ \ with}\label{f3_2a}%
\end{equation}%
\begin{equation}
I_{k}(t)\,=\,\int_{b_{k}}^{b_{k+1}}\sum_{j=1}^{k}\frac{1}{2\pi i}%
\oint\nolimits_{C_{j}}e^{\lambda_{j}(\zeta)+s(\zeta-t)}d\zeta ds,\text{
\ \ }k=1,\ldots,n-1.\label{f3_2b}%
\end{equation}
As in the proof of Lemma \ref{sec3_lem1} we assume again that all integration
paths $C_{j}$, $j=1,\ldots,n$, in (\ref{f3_2b})\ are identical with a single
curve $C\subseteq\mathbb{C}$ such that all $\lambda_{1},\ldots,\lambda_{n}$
are analytic on and outside of $C$ with a simple pole at infinity. Because of
Lemma \ref{sec2_Lem3} we can assume that%
\begin{equation}
\mathbb{R}_{+}\,\subset\,\operatorname*{Ext}(C).\label{f3_2c}%
\end{equation}
After these preparations we deduce from (\ref{f3_2b}) that%
\begin{align}
\sum_{k=1}^{n-1}I_{k}(t)\,  & =\,\sum_{k=1}^{n-1}\frac{1}{2\pi i}%
\oint\nolimits_{C}e^{\lambda_{k}(\zeta)}\int_{b_{k}}^{b_{n}}e^{s(\zeta
-t)}dsd\zeta\text{ \ \ \ \ \ \ \ \ \ \ \ \ }\nonumber\\
\text{\ \ \ \ \ \ \ \ \ \ \ \ }  & =\,\sum_{k=1}^{n-1}\frac{1}{2\pi i}%
\oint\nolimits_{C}e^{\lambda_{k}(\zeta)}\left[  e^{b_{n}(\zeta-t)}%
-e^{b_{k}(\zeta-t)}\right]  \frac{d\zeta}{\zeta-t}\text{ \ \ \ \ \ \ \ }%
\nonumber\\
\text{\ \ \ \ \ \ \ \ \ \ \ \ }  & =\,\sum_{k=1}^{n}\frac{-1}{2\pi i}%
\oint\nolimits_{C}e^{\lambda_{k}(\zeta)}e^{b_{k}(\zeta-t)}\frac{d\zeta}%
{\zeta-t}\label{f3_2d}\\
\text{\ \ \ \ \ \ \ \ \ \ \ \ }  & =\,\sum_{k=1}^{n}\left(  e^{\lambda_{k}%
(t)}\,-\,e^{a_{kk}-t\,b_{k})}\right)  .\nonumber
\end{align}
Indeed, the first equality in (\ref{f3_2d}) is a consequence of Fubini's
Theorem and (\ref{f3_2b}), the second one follows from elementary integration,
and the third one follows in the same way as the conclusion in the proof of
Lemma \ref{sec3_lem1}. We give some more details, and deduce with the help of
Lemma \ref{sec2_Lem5} that%
\[
\sum_{k=1}^{n}\frac{1}{2\pi i}\oint\nolimits_{C}e^{\lambda_{k}(\zeta)}%
e^{b_{n}(\zeta-t)}\frac{d\zeta}{\zeta-t}\,=\,\frac{1}{2\pi i}\oint
\nolimits_{C}e^{b_{n}(\zeta-t)}\sum_{k=1}^{n}e^{\lambda_{k}(\zeta)}%
\frac{d\zeta}{\zeta-t}\,=\,0,
\]
which then proves the third equality in (\ref{f3_2d}). Notice that
$t\in\operatorname*{Ext}(C)$. For a verification of the last equality in
(\ref{f3_2d}) we define the functions $r_{k}$, $k=1,\ldots,n$, by%
\[
\lambda_{k}(z)+b_{k}z\,=\,a_{kk}+r_{k}(z).
\]
It then follows from (\ref{f2_1a1}) in Lemma \ref{sec2_Lem1} that
$r_{k}(\infty)=0$ for $k=1,\ldots,n$, and obviously each $r_{k}$ is analytic
on and outside of $C$. Since $C$ is positively oriented, it follows from
Cauchy's formula that%
\begin{align*}
& \frac{-1}{2\pi i}\oint\nolimits_{C}e^{\lambda_{k}(\zeta)}e^{b_{k}(\zeta
-t)}\frac{d\zeta}{\zeta-t}\,=\,\frac{-e^{-t\,b_{k}}}{2\pi i}\oint
\nolimits_{C}e^{\lambda_{k}(\zeta)+b_{k}\zeta}\frac{d\zeta}{\zeta-t}\medskip\\
& \text{ \ \ \ \ \ }=\,\frac{-e^{a_{kk}-t\,b_{k}}}{2\pi i}\oint\nolimits_{C}%
e^{r_{k}(\zeta)}\frac{d\zeta}{\zeta-t}\medskip\\
& \text{ \ \ \ \ \ }=\text{\thinspace}e^{a_{kk}-t\,b_{k}}\left(  e^{r_{k}%
(t)}\,-\,1\right)  \,=\,e^{\lambda_{k}(t)}\,-\,e^{a_{kk}-t\,b_{k}}%
\end{align*}
for each $k=1,\ldots,n$, which completes the verification of the last equality
in (\ref{f3_2d}).

By putting (\ref{f3_2a}) and (\ref{f3_2d}) together we arrive at
(\ref{f1_1a}), which proves that (\ref{f1_3a}) and (\ref{f1_3b}) is a
representation of the measure $\mu_{A,B}$ that satisfies (\ref{f1_1a}). From
Subsection \ref{sec3_1} it then follows that also (\ref{f1_3a}) in combination
with (\ref{f1_3c}) defines the same measure $\mu_{A,B}$.

\subsection{\label{sec3_3}Proof of the Inclusion (\ref{f1_3d})}

\qquad Since before (\ref{f3_2a}) we have verified that $w_{A,B}(t)=0$ for
$0\leq t<b_{1}$ and for $t>b_{n}$, inclusion (\ref{f1_3d}) in Theorem
\ref{sec1_thm2} follows from (\ref{f1_3a}).

From (\ref{f3_2b}) it is immediately obvious that the density function
$w_{A,B}$ is the restriction of an entire function in each interval of the set
$[b_{1},b_{n}]\diagdown\{b_{1},\ldots,b_{n}\}$.

\subsection{Remark about the Proof of (\ref{f1_3a}), (\ref{f1_3b}), and
(\ref{f1_3c})}

\qquad In Subsection \ref{sec3_2} the representation of the measure $\mu
_{A,B}$ in Theorem \ref{sec1_thm2} has been proved with the help of an ansatz.
This strategy is very effective, but it gives no hints how one can
systematically find such an ansatz. Actually, the expressions in (\ref{f1_3a})
and (\ref{f1_3b}) were only found after a lengthy asymptotic analysis of the
function (\ref{f1_1a0}) with a subsequent application of the Post-Widder
formulae for the inversion of Laplace transforms. This systematic, but
laborious approach is posted at the ArXiv under \cite[Version 2]{Stahl11}.

\section{\label{sec5}The Proof of Positivity}

For the completion of the proof of Theorem \ref{sec1_thm2} it remains only to
show that the measure $\mu_{A,B}$ is positive, which is done in the present
section. The essential problem is to show that the density function $w_{A,B}$
given by (\ref{f1_3b}) or by (\ref{f1_3c}) in Theorem \ref{sec1_thm2} is
non-negative in $\left[  b_{1},b_{n}\right]  \diagdown\{b_{1},\ldots,b_{n}\}$.

\subsection{\label{sec5_1}A Preliminary Assumption}

\qquad In a first version of the proof of positivity we make the following
additional assumption, which will afterwards, in Subsection \ref{sec5_4}, be
shown to be superfluous.$\medskip$

\noindent\textbf{Assumption 3}. \textit{We assume that the polynomial
}$g(\lambda,t)$\textit{\ in equation (\ref{f2_1a2}), which is identical with
the polynomial in (\ref{f1_2e}), is irreducible.}$\medskip$

For the convenience of the reader we list definitions from Section \ref{sec2}
that will be especially important in the next subsection. Some of them now
have special properties because of Assumption 3.

\begin{enumerate}
\item[(i)] The solution $\lambda$ of equation\textit{\ }(\ref{f2_1a2}) is an
algebraic function of degree $n$ (cf. Subsection \ref{sec2_1}).

\item[(ii)] The covering manifold $\mathcal{R}_{\lambda}$ over $\overline
{\mathbb{C}}$\ from Subsection \ref{sec2_2} is now a compact Riemann surface
with $n$ sheets over $\overline{\mathbb{C}}$. As before, by $\pi_{\lambda
}:\mathcal{R}_{\lambda}\longrightarrow\overline{\mathbb{C}}$ we denote its
canonical projection.

\item[(iii)] The $n$ functions $\lambda_{j}$, $j=1,\ldots,n$, from Definition
\ref{sec2_def2} in Subsection \ref{sec2_2} are $n$ branches of the single
algebraic function $\lambda$.

\item[(iv)] By $C_{j}$, $j=1,\ldots,n$, we denote $n$ Jordan curves that are
all identical with a single curve $C\subset\mathbb{C}$, and this curve is
assumed to be smooth, positively oriented, and chosen in such a way that each
function $\lambda_{j}$, $j=1,\ldots,n$, is analytic on and outside of $C$.

\item[(v)] The reflection function $\varrho:\mathcal{R}_{\lambda
}\longrightarrow\mathcal{R}_{\lambda}$ from Lemma \ref{sec2_Lem5c} in
Subsection \ref{sec2_2} is the lifting of the complex conjugation from
$\overline{\mathbb{C}}$ onto $\mathcal{R}_{\lambda}$, i.e., we have
$\pi_{\lambda}(\varrho(\zeta))=\overline{\pi_{\lambda}(\zeta)}$ for all
$\zeta\in\mathcal{R}_{\lambda}$. By $\mathcal{R}_{+}\subset\mathcal{R}%
_{\lambda}$ we denote the subsurface $\mathcal{R}_{+}:=\{\,z\in\mathcal{R}%
_{\lambda}\,|\,\operatorname*{Im}\pi_{\lambda}(z)>0\,\}$, and by
$\mathcal{R}_{-}\subset\mathcal{R}_{\lambda}$ the corresponding subsurface
defined over base points with a negative imaginary part; $\mathcal{R}_{+}$ and
$\mathcal{R}_{-}$ are bordered Riemann surfaces over $\{\,\operatorname*{Im}%
\,z>0\,\}$ and $\{\,\operatorname*{Im}\,z<0\,\}$, respectively.
\end{enumerate}

\subsection{\label{sec5_2}The Main Proposition}

\qquad The proof of positivity under Assumption 3 is based on assertions that
are formulated and proved in the next proposition.

\begin{proposition}
\label{sec5_prop1}Under Assumption 3 for any $t\in(b_{I},b_{I+1})$ with
$I\in\{1,\ldots,n\allowbreak-1\}$ there exists a chain $\gamma$ of finitely
many closed integration paths on the Riemann surface $\mathcal{R}_{\lambda}$
such that%
\begin{equation}
\operatorname*{Im}e^{\lambda(\zeta)+t\,\pi_{\lambda}(\zeta)}\,=\,0\text{
\ \ \ for all \ \ \ }\zeta\in\gamma,\medskip\label{f5_2a1}%
\end{equation}%
\begin{equation}
\frac{1}{2\pi i}\oint\nolimits_{\gamma}e^{\lambda(\zeta)+t\,\pi_{\lambda
}(\zeta)}d\zeta\,<\,0,\smallskip\label{f5_2a2}%
\end{equation}%
\begin{equation}
\frac{1}{2\pi i}\oint\nolimits_{\gamma}e^{\lambda(\zeta)+t\,\pi_{\lambda
}(\zeta)}d\zeta\,=\,-\,\sum_{j=1}^{I}\frac{1}{2\pi i}\oint\nolimits_{C_{j}%
}e^{\lambda_{j}\left(  z\right)  +t\,z}dz,\label{f5_2a3}%
\end{equation}
and as a consequence of (\ref{f5_2a2}) and (\ref{f5_2a3}) we have%
\begin{equation}
\sum_{b_{j}<t}\frac{1}{2\pi i}\oint\nolimits_{C_{j}}e^{\lambda_{j}%
(z)+t\,z}dz\,>\,0.\label{f5_2a4}%
\end{equation}
The definition of the objects $\pi_{\lambda}$, $\lambda$, $\lambda_{j}$,
$C_{j}$, $j=1,\ldots,I$, in (\ref{f5_2a1}) through (\ref{f5_2a4}) were listed
in (ii), (i), (iii) and (iv) in the last subsection.$\medskip$
\end{proposition}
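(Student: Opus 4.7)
The plan is to treat $h_t(\zeta) := \lambda(\zeta) + t\,\pi_\lambda(\zeta)$ as a single-valued meromorphic function on the compact Riemann surface $\mathcal{R}_\lambda$ (possible by Assumption 3), and to exhibit $\gamma$ as a closed chain inside the level set $L_t := \{\zeta \in \mathcal{R}_\lambda : \operatorname*{Im} h_t(\zeta) \in \pi\mathbb{Z}\}$, on which $e^{h_t}$ is automatically real, so that (\ref{f5_2a1}) holds by construction. By Lemma \ref{sec2_Lem1}, $h_t$ has $n$ simple poles over $\infty$ with polar part $(t - b_j)\pi_\lambda$ on the sheet $S_\lambda^{(j)}$, so the hypothesis $t \in (b_I, b_{I+1})$ partitions the poles into $I$ \emph{outgoing} ones (where $\operatorname*{Re} h_t \to +\infty$ along the positive real axis) and $n - I$ \emph{incoming} ones.

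First, I would lift $\sum_{j=1}^{I} C_j$ to a $1$-cycle $\tilde\gamma_0$ on $\mathcal{R}_\lambda$ concentrated on the sheets $S_\lambda^{(j)}$ for $j \leq I$. Using the Morse-theoretic structure of the harmonic function $\operatorname*{Im} h_t$---whose critical points are the finitely many zeros of the meromorphic differential $dh_t$---I would show that $L_t$ contains a collection of closed arcs whose union $\gamma$, suitably oriented, is homologous to $-\tilde\gamma_0$ in $\mathcal{R}_\lambda \setminus \pi_\lambda^{-1}(\infty)$. The arcs are obtained by following trajectories of $\operatorname*{Im} h_t \equiv 0 \pmod \pi$ that emanate from each outgoing pole into the region $\{\operatorname*{Re} h_t > R\}$; passing across saddles, they close up into Jordan curves on $\mathcal{R}_\lambda$. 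Then Cauchy's theorem applied to the holomorphic differential $e^{h_t}\,d\pi_\lambda$ on $\mathcal{R}_\lambda \setminus \pi_\lambda^{-1}(\infty)$ gives (\ref{f5_2a3}).

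For (\ref{f5_2a2}) I would exploit the reflection $\varrho$ of Lemma \ref{sec2_Lem5c}. Since $\operatorname*{Im}(h_t \circ \varrho) = -\operatorname*{Im} h_t$, both $L_t$ and $\gamma$ are $\varrho$-invariant. Decomposing $\gamma = \gamma_+ \cup \gamma_-$ with $\gamma_\pm \subset \mathcal{R}_\pm$ (the intersection $\gamma \cap \pi_\lambda^{-1}(\mathbb{R})$ contributes nothing since $d\pi_\lambda$ is real there), applying the change of variables $\zeta \mapsto \varrho(\zeta)$ in the $\gamma_-$-integral, and using that $e^{h_t}$ is real on $\gamma$, one obtains
\[
\frac{1}{2\pi i}\oint_\gamma e^{h_t(\zeta)}\, d\pi_\lambda(\zeta) \;=\; \frac{1}{\pi}\, \operatorname*{Im}\!\int_{\gamma_+} e^{h_t(\zeta)}\, d\pi_\lambda(\zeta),
\]
which is manifestly real. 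The orientation inherited from the positive orientation of the $C_j$ (via the homology $\gamma \sim -\tilde\gamma_0$) forces $\gamma_+$ to traverse $\mathcal{R}_+$ with net decreasing $\operatorname*{Im} \pi_\lambda$, and, because $e^{h_t}$ is positive along the principal component of $\gamma_+$, the right-hand side is strictly negative. Assertion (\ref{f5_2a4}) is then immediate upon substituting (\ref{f1_3b}) for $w_{A,B}$.

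The main obstacle, I expect, will be making the construction of $\gamma$ rigorous: one must verify that the arcs of $L_t$ emerging from each outgoing pole genuinely close up, possibly via saddles, into a chain with the correct homology class, and that this holds uniformly for every $t \in (b_I, b_{I+1})$. The topology of $L_t$ may bifurcate as $t$ crosses values at which a saddle critical value of $h_t$ hits $\pi\mathbb{Z}$ in its imaginary part, but the $\varrho$-symmetry together with the separation of outgoing from incoming poles by the real axis constrain these bifurcations sufficiently that the claimed homology class persists throughout the interval.
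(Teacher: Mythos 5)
Your plan shares the skeleton of the paper's argument (level lines of $\operatorname{Im}(\lambda+t\,\pi_{\lambda})$, the reflection $\varrho$, and Cauchy's theorem for (\ref{f5_2a3})), but two steps would fail as written. First, working with $L_{t}=\{\operatorname{Im}h_{t}\in\pi\mathbb{Z}\}$ admits arcs on which $e^{h_{t}}$ is negative, which destroys any sign argument; the paper uses only the zero level set, on which $e^{h_{t}}=e^{\operatorname{Re}h_{t}}>0$ everywhere. Second, and more seriously, your mechanism for (\ref{f5_2a2}) --- that $\gamma_{+}$ traverses $\mathcal{R}_{+}$ with ``net decreasing $\operatorname{Im}\pi_{\lambda}$'' --- cannot work: by the $\varrho$-symmetry each component of $\gamma$ meets $\pi_{\lambda}^{-1}(\mathbb{R})$ in exactly two points, so its upper half begins and ends over the real axis and the net change of $\operatorname{Im}\pi_{\lambda}$ along it is zero. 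Positivity of $e^{h_{t}}$ plus an orientation alone does not determine the sign of $\operatorname{Im}\int_{\gamma_{+}}e^{h_{t}}\,d\pi_{\lambda}$, because $\operatorname{Im}\,d\pi_{\lambda}$ changes sign along the arc. The missing ingredient is that $\operatorname{Re}h_{t}$ is \emph{monotonically increasing} along each upper half-arc (this comes from the Cauchy--Riemann equations once the arc is realized as boundary of a region on whose one side $\operatorname{Im}h_{t}$ has a definite sign); an integration by parts, using that the endpoints lie over $\mathbb{R}$, then gives $-\frac{1}{\pi}\int_{0}^{s_{k}}(e^{h_{t}}\circ\gamma_{k})'(s)\,\operatorname{Im}(\pi_{\lambda}\circ\gamma_{k}(s))\,ds<0$. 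That is the actual source of the strict inequality, and it is absent from your proposal.

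The ``main obstacle'' you flag --- that the arcs emanating from the outgoing poles close up into a chain of the right homology class, uniformly in $t$ --- is dissolved rather than solved in the paper: instead of tracing trajectories from the poles and worrying about bifurcations at saddles, one defines $D_{\pm}$ as the set where $\operatorname{Im}\pi_{\lambda}$ and $\operatorname{Im}(\lambda+t\,\pi_{\lambda})$ are both positive (resp.\ both negative), puts $D:=\operatorname{Int}(\overline{D_{+}\cup D_{-}})$, and takes $\gamma:=\partial D$. Compactness of $\mathcal{R}_{\lambda}$ gives finiteness of the chain; the coupling of the two sign conditions forces each component to be $\varrho$-symmetric, to cross $\pi_{\lambda}^{-1}(\mathbb{R})$ exactly twice, and to carry the monotonicity just described; and the asymptotics (\ref{f2_1a1}) show that precisely the $I$ lifted exteriors $\widehat{C}_{j}$ with $b_{j}<t$ lie inside $D$, so Cauchy's theorem applied to $D$ minus these sets yields (\ref{f5_2a3}) with no case analysis in $t$ whatsoever. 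Without this boundary-of-a-region definition (or an equivalent device), both the closing-up of your arcs and the strict negativity in (\ref{f5_2a2}) remain unproved.
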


The proof of Proposition \ref{sec5_prop1} will be prepared by two lemmas and
several technical definitions. Throughout the present subsection the numbers
$t\in(b_{I},b_{I+1})$ and $I\in\{1,\ldots,n-1\}$ are kept fixed, and
Assumption 3 is effective.

We define%
\begin{align}
& D_{\pm}\,:=\,\{\,\zeta\in\mathcal{R}_{\lambda}\,|\,\pm\operatorname*{Im}%
(\pi_{\lambda}(\zeta))>0,\text{ }\pm\operatorname*{Im}(\lambda(\zeta
)+t\,\pi_{\lambda}(\zeta))>0\,\},\bigskip\nonumber\\
& D\,:=\,\operatorname*{Int}\left(  \overline{D_{+}\cup D_{-}}\right)
.\label{f5_2b1}%
\end{align}
The set $D\subset\mathcal{R}_{\lambda}$ is open, but not necessarily
connected. Since the algebraic function $\lambda$ is of real type, we have
$\varrho(D_{\pm})=D_{\mp}$ and $D_{\pm}\subset\mathcal{R}_{\pm}$\ with the
reflection function $\varrho$ and Riemann surfaces $\mathcal{R}_{+}$ and
$\mathcal{R}_{-}$\ from (v) in the listing in the last subsection.

By $Cr\subset\mathcal{R}_{\lambda}$ we denote the set of critical points of
the function $\operatorname*{Im}(\lambda+t\,\pi_{\lambda})$, which are at the
same time the critical points of $\operatorname{Re}(\lambda+t\,\pi_{\lambda}%
)$, and the zeros of the derivative $(\lambda+t\,\pi_{\lambda})^{\prime}$.
Since $\mathcal{R}_{\lambda}$ is compact, it follows that $Cr$ is
finite.$\medskip$

\begin{lemma}
\label{sec5_Lem1}(i) The boundary $\partial D\subset\mathcal{R}_{\lambda}$
consists of a chain%
\begin{equation}
\gamma=\gamma_{1}+\cdots+\gamma_{K}\label{f5_2c1}%
\end{equation}
of $K$ piecewise analytic Jordan curves $\gamma_{k}$, $k=1,\ldots,K$. The
orientation of each $\gamma_{k}$, $k=1,\ldots,K,$ is chosen in such a way that
the domain $D$ lies to its left. The curves $\gamma_{k}$, $k=1,\ldots,K$, are
not necessarily disjoint, however, intersections are possible only at critical
points $\zeta\in Cr$.$\medskip$

(ii) The choice of the Jordan curves $\gamma_{k}$, $k=1,\ldots,K$, in
(\ref{f5_2c1})\ can be done in such a way that each of them is invariant under
the reflection function $\varrho$ except for its orientation, i.e., we have
$\varrho(\gamma_{k})=-\gamma_{k}$ for $k=1,\ldots,K$.$\medskip$

(iii) Let $2s_{k}$ be the length of the Jordan curve $\gamma_{k}$,
$k=1,\ldots,K$; with a parameterization by arc length we then have $\gamma
_{k}:\left[  0,2s_{k}\right]  \longrightarrow\partial D\subset\mathcal{R}%
_{\lambda}$. The starting point $\gamma_{k}(0)$ can be chosen in such a way
that%
\begin{equation}
\gamma_{k}(\left(  0,s_{k}\right)  )\subset\partial D_{+}\diagdown\pi
_{\lambda}^{-1}(\mathbb{R})\text{ \ and \ }\gamma_{k}(\left(  s_{k}%
,2s_{k}\right)  )\subset\partial D_{-}\diagdown\pi_{\lambda}^{-1}%
(\mathbb{R}).\smallskip\label{f5_2c2}%
\end{equation}

(iv) The function $\operatorname{Re}\left(  \lambda\circ\gamma_{k}+t\,\left(
\pi_{\lambda}\circ\gamma_{k}\right)  \right)  $ is monotonically increasing on
$(0,\allowbreak s_{k})$, mo\-no\-tonically decreasing on $\left(  s_{k}%
,2s_{k}\right)  $, and these monotonicities are strict at each $\zeta\in
\gamma_{k}\diagdown(Cr\cup\pi_{\lambda}^{-1}(\mathbb{R}))$.$\medskip$
\end{lemma}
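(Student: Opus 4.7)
My plan is to analyse the meromorphic function $F := \lambda + t\,\pi_{\lambda}$ on the compact Riemann surface $\mathcal{R}_{\lambda}$. First I would collect the relevant facts: $F$ has only finitely many critical points $Cr=\{F'=0\}$ and $n$ simple poles at the points of $\pi_{\lambda}^{-1}(\infty)$, and neither $Cr$ nor these poles meet $\pi_{\lambda}^{-1}(\mathbb{R})$ (by Lemma \ref{sec2_Lem3} and because $t\neq b_{j}$ for every $j$). I would then describe the zero level set $L := \{\operatorname{Im} F = 0\}$ as a real-analytic graph embedded in $\mathcal{R}_{\lambda}$: away from $Cr$ it is a smooth real $1$-submanifold, while near a point $\zeta_{0}\in Cr\cap L$ with $F(\zeta)-F(\zeta_{0}) \sim c(\zeta-\zeta_{0})^{k}$ it consists of $2k$ analytic rays crossing transversally at $\zeta_{0}$. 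Since $\pi_{\lambda}^{-1}(\mathbb{R})\subset L$ by real type and $\varrho$ from Lemma \ref{sec2_Lem5c} satisfies $F\circ\varrho = \overline{F}$, one has $\varrho(D_{\pm})=D_{\mp}$, so that both $D$ and $\partial D$ are $\varrho$-invariant.

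For part (i) I would identify $\partial D$ with a subgraph of $L$: a non-critical point $\zeta\in L$ lies in $\partial D$ iff one of its two adjacent sides is in $D$ and the other is not. At a vertex of $L$, going once around cyclically orders the $2k$ local sectors as ``in $D$'' or ``not in $D$'', and the number of sign-changes is necessarily even, so the valence of every vertex of $\partial D$ is even. A standard Eulerian-type argument then decomposes $\partial D$ as an edge-disjoint chain $\gamma = \gamma_{1}+\cdots+\gamma_{K}$ of piecewise analytic Jordan curves meeting only at critical points, with orientation fixed by requiring $D$ to lie on the left.

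For part (ii) I would argue that every $\gamma_{k}$ meets $\pi_{\lambda}^{-1}(\mathbb{R})$. The argument is by contradiction: if $\gamma_{k}\subset\mathcal{R}_{+}$ then one side of $\gamma_{k}$ would be a relatively compact region $R\subset\mathcal{R}_{+}$ with $\operatorname{Im}F\equiv 0$ on $\partial R$; if $R$ contains no pole of $F$, the maximum principle applied to the harmonic function $\operatorname{Im}F$ forces $\operatorname{Im}F\equiv 0$ on $R$, contradicting $R\subset\mathcal{R}_{+}\setminus L$, and if $R$ contains a pole $\infty^{(j)}$, then $\operatorname{Im}F$ takes both signs in every punctured neighbourhood of $\infty^{(j)}$, contradicting the constant sign of $\operatorname{Im}F$ on $R$. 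Hence each $\gamma_{k}$ meets $\pi_{\lambda}^{-1}(\mathbb{R})$ and, as a connected component of the $\varrho$-invariant set $\partial D$, is itself $\varrho$-invariant; the orientation-reversal of $\varrho$ yields $\varrho(\gamma_{k})=-\gamma_{k}$. I expect the polar case of this maximum-principle argument to be the only genuinely delicate step in the whole proof.

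For parts (iii) and (iv), the restriction of $\varrho$ to the topological circle $\gamma_{k}$ is an orientation-reversing involution, so it has exactly two fixed points, both in $\pi_{\lambda}^{-1}(\mathbb{R})$; these split $\gamma_{k}$ into one arc in $\overline{\mathcal{R}_{+}}$ and one in $\overline{\mathcal{R}_{-}}$, and starting the arc-length parameterisation at one of the fixed points yields (\ref{f5_2c2}). For the monotonicity of $\operatorname{Re}F$ along $\gamma_{k}$ I would invoke the Cauchy--Riemann identity $\nabla\operatorname{Re}F = -i\,\nabla\operatorname{Im}F$: since $\operatorname{Im}F$ is constant along $\gamma_{k}$, the unit tangent of $\gamma_{k}$ is orthogonal to $\nabla\operatorname{Im}F$ and hence collinear with $\nabla\operatorname{Re}F$; the sign is pinned down by ``$D$ on the left'' together with $\operatorname{Im}F>0$ in $D_{+}$ and $\operatorname{Im}F<0$ in $D_{-}$, selecting $+\nabla\operatorname{Re}F$ on $\gamma_{k}((0,s_{k}))$ and $-\nabla\operatorname{Re}F$ on $\gamma_{k}((s_{k},2s_{k}))$. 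Strict monotonicity fails only where $\nabla\operatorname{Re}F$ vanishes, i.e.\ at points of $Cr$, or at the two points of $\gamma_{k}\cap\pi_{\lambda}^{-1}(\mathbb{R})$ where the parameterisation switches half-surfaces.
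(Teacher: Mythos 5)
Your treatment of parts (i), (iii) and (iv) follows essentially the same route as the paper: (i) by the local structure of level lines of the harmonic function $\operatorname{Im}(\lambda+t\,\pi_\lambda)$ together with a finiteness/exhaustion argument; (iii) by the fact that $\varrho|_{\gamma_k}$ is an orientation-reversing involution of a topological circle with fixed set $\gamma_k\cap\pi_\lambda^{-1}(\mathbb{R})$; and (iv) by the Cauchy--Riemann relation between the tangential derivative of $\operatorname{Re}(\lambda+t\,\pi_\lambda)$ and the normal derivative of $\operatorname{Im}(\lambda+t\,\pi_\lambda)$, the latter having a definite sign on $\partial D$ by the very definition of $D$.

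Where you depart from the paper is in part (ii), and this is where there is a genuine gap. To show that no $\gamma_k$ lies entirely in $\mathcal{R}_+$ you assume that if $\gamma_k\subset\mathcal{R}_+$ then ``one side of $\gamma_k$ would be a relatively compact region $R\subset\mathcal{R}_+$'' on which you can run a maximum-principle argument for the harmonic function $\operatorname{Im}F$. But $\mathcal{R}_\lambda$ is a compact Riemann surface that may have positive genus, and $\mathcal{R}_+=\pi_\lambda^{-1}(\{\operatorname{Im}z>0\})$ is a branched cover of the open upper half-plane that may likewise carry nontrivial topology. A Jordan curve $\gamma_k$ in $\mathcal{R}_+$ need not separate $\mathcal{R}_\lambda$ at all (it could be a non-separating cycle), and even if it does separate, there is no reason why either side should be contained in $\mathcal{R}_+$. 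You flag the polar case as ``the only genuinely delicate step,'' but the delicate step is really the existence of the region $R$ itself, and as stated it is unproved.

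The paper sidesteps this entirely. Its argument for (ii) is purely local and uses exactly the quantitative input you already establish for (iv): from the definition of $D$ one gets $\frac{\partial}{\partial n}\operatorname{Im}(\lambda+t\,\pi_\lambda)>0$ along $\gamma_k\cap(\mathcal{R}_+\setminus Cr)$, hence by Cauchy--Riemann the tangential derivative of $\operatorname{Re}(\lambda+t\,\pi_\lambda)$ along $\gamma_k$ is strictly positive there (and strictly negative in $\mathcal{R}_-$). If $\gamma_k\subset\overline{\mathcal{R}}_+$, then $\operatorname{Re}(\lambda+t\,\pi_\lambda)$ would be continuous, nondecreasing, and strictly increasing on a dense subset of the closed curve $\gamma_k$, which is impossible. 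This forces every $\gamma_k$ to meet both $\mathcal{R}_+$ and $\mathcal{R}_-$; the $\varrho$-symmetry of $\partial D$ then lets one choose the $\gamma_k$ with $\varrho(\gamma_k)=-\gamma_k$. I would recommend replacing your maximum-principle paragraph by this monotonicity argument: you already have all the ingredients in your write-up of (iv), so the change costs nothing, and it removes the unjustified topological assumption.
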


\begin{proof}
The function $\operatorname*{Im}(\lambda+t\,\pi_{\lambda})$ is harmonic in
$\mathcal{R}_{\lambda}\diagdown\allowbreak\pi_{\lambda}^{-1}(\{\infty\}) $. As
a system of level lines of an harmonic function, $\partial D$ consists of
piecewise analytic arcs, and their orientation can be chosen in such a way
that the domain $D$ lies to the left of $\partial D$. Since $\partial
D\diagdown Cr$ consists of analytic arcs, locally each $\zeta\in\partial
D\diagdown Cr$ touches only two components of $\mathcal{R}_{\lambda}%
\diagdown\partial D$, and locally it belongs only to one of the analytic
Jordan subarcs of $\partial D\diagdown Cr$. Globally, for each $\zeta
\in\partial D$ there exists at least one Jordan curve $\widetilde{\gamma}$ in
$\partial D$ with $\zeta\in\widetilde{\gamma}$, but this association is in
general not unique, different choices may be possible, and the cuts that are
candidates for such a choice bifurcate only at points in $Cr$. By a stepwise
exhaustion it follows that $\partial D$ is the union of Jordan curves, i.e.,
we have%
\begin{equation}
\partial D\,=\,\gamma\,=\,\gamma_{1}+\gamma_{2}+\cdots\label{f5_2c1b}%
\end{equation}
Different curves $\gamma_{k}$ may intersect, but because of the Implicit
Function Theorem, intersections are possible only at points in $Cr$.

After these considerations it remains only to show in assertion (i) that the
number of Jordan curves $\gamma_{k}$ in (\ref{f5_2c1b}) is finite; basically
this follows from the compactness of $\mathcal{R}_{\lambda}$. If we assume
that there exist infinitely many curves $\gamma_{k}$ in (\ref{f5_2c1b}), then
there exists at least one cluster point $z^{\ast}\in\mathcal{R}_{\lambda}$
such that any neighborhood of $z^{\ast}$ intersects infinitely many curves
$\gamma_{k}$ from (\ref{f5_2c1b}). Obviously, $z^{\ast}\in\pi_{\lambda}%
^{-1}(\{\infty\})$ is impossible. Let $z:V\longrightarrow\mathbb{D}$ be a
local coordinate of $z^{\ast}$ that maps a neighborhood $V$ of $z^{\ast}%
$\ conformally onto the unit disk $\mathbb{D}$ with $z(z^{\ast})=0$. The
function $g:=\operatorname*{Im}(\lambda+t\,\pi_{\lambda})\circ z^{-1}$ is
harmonic in $\mathbb{D}$ and not identically constant. If $g$ has a critical
point of order $m$ at the origin, then, because of the local structure of
level lines near a critical point, small neighborhoods of the origin can
intersect only with at most $m$ elements of the set $\left\{  \,z(\gamma
_{k}|_{V});\,k=1,2,\ldots\,\right\}  $. If, on the other hand, $g$ has no
critical point at the origin, then it follows from the Implicit Function
Theorem that small neighborhoods of the origin can intersect with at most one
element of the set $\left\{  \,z(\gamma_{k}|_{V});\,k=1,2,\ldots\,\right\}  $.
Hence, the assumption that $z^{\ast} $ is a cluster point of curves
$\gamma_{k}$ from (\ref{f5_2c1b}) is impossible, and the finiteness of the sum
in (\ref{f5_2c1b}) is proved, which completes the proof of assertion
(i).$\smallskip$

For each Jordan curve $\gamma_{k},$ $k=1,\ldots,K,$ in (\ref{f5_2c1}) we
deduce from (\ref{f5_2b1})\ that%
\begin{equation}
\frac{\partial}{\partial n}\operatorname*{Im}(\lambda(\zeta)+t\,\pi_{\lambda
}(\zeta))\,>\,0\text{ \ for each \ }\zeta\in\gamma_{k}\cap(\mathcal{R}%
_{+}\diagdown Cr),\label{f5_2c5}%
\end{equation}
and since the orientation of $\partial D=\gamma$ has been chosen such that $D$
lies to the left of each $\gamma_{k}$, we further have%
\begin{equation}
\frac{\partial}{\partial t}\operatorname{Re}(\lambda(\zeta)+t\,\pi_{\lambda
}(\zeta))\,>\,0\text{ \ for each \ }\zeta\in\gamma_{k}\cap(\mathcal{R}%
_{+}\diagdown Cr)\label{f5_2c4}%
\end{equation}
by the Cauchy-Riemann differential equations. In (\ref{f5_2c5}),
$\partial/\partial n$ denotes the normal derivative on $\gamma_{k}$ pointing
into $D$, and in (\ref{f5_2c4}), $\partial/\partial t$ denotes the tangential
derivative. In $\mathcal{R}_{-}$, we get the corresponding inequality%
\begin{equation}
\frac{\partial}{\partial t}\operatorname{Re}(\lambda(\zeta)+t\,\pi_{\lambda
}(\zeta))\,<\,0\text{ \ for each \ }\zeta\in\gamma_{k}\cap(\mathcal{R}%
_{-}\diagdown Cr).\label{f5_2c6}%
\end{equation}

Since $\lambda$ is a function of real type, we deduce with the help of the
reflection function $\varrho$ that%
\[
\left(  \lambda\circ\varrho\right)  (\zeta)+t\,\left(  \pi_{\lambda}%
\circ\varrho\right)  (\zeta)=\overline{\lambda(\zeta)+t\,\pi_{\lambda}(\zeta
)}\text{ \ \ for \ \ }\zeta\in\mathcal{R}_{\lambda},
\]
and therefore also that%
\begin{equation}
\varrho(\partial D)=\partial D.\label{f5_2c3}%
\end{equation}

As a first consequence of (\ref{f5_2c4}) and (\ref{f5_2c6}) we conclude that
none of the Jordan curves $\gamma_{k}$ in (\ref{f5_2c1}) can be contained
completely in $\overline{\mathcal{R}}_{+}$ or $\overline{\mathcal{R}}_{-}$.
Indeed, if we assume that some $\gamma_{k}$ is contained in $\overline
{\mathcal{R}}_{+}$, then it would follow from (\ref{f5_2c4}) that
$\operatorname{Re}(\lambda+t\,\pi_{\lambda})$ could not be continues along the
whole curve $\gamma_{k}$.

Since each $\gamma_{k},$ $k=1,\ldots,K,$ in (\ref{f5_2c1}) intersects at the
same time $\mathcal{R}_{+}$ and $\mathcal{R}_{-}$, it follows that all curves
$\gamma_{k}$ can be chosen from $\partial D$ in the exhaustion process in the
proof of assertion (i) in such a way that $\varrho(\gamma_{k})=-\gamma_{k}$
for each $k=1,\ldots,K$, which proves assertion (ii). We remark that a choice
between different options for a selection of the $\gamma_{k},$ $k=1,\ldots,K,$
exists only if points of the intersection $\gamma_{k}\cap\pi_{\lambda}%
^{-1}(\mathbb{R})$ belong to $Cr$.$\smallskip$

From the fact that each $\gamma_{k}$ in (\ref{f5_2c1}) is a Jordan curve,
which is neither fully contained in $\overline{\mathcal{R}}_{+}$ nor in
$\overline{\mathcal{R}}_{-}$ and that we have $\varrho(\gamma_{k})=-\gamma
_{k}$, we deduce that $\gamma_{k}\cap\pi_{\lambda}^{-1}(\mathbb{R})$ consists
of exactly two points. By an appropriate choice of the starting point of the
parameterization of $\gamma_{k}$ in $\gamma_{k}\cap\pi_{\lambda}%
^{-1}(\mathbb{R})$ it follows that (\ref{f5_2c2}) is satisfied, which proves
assertion (iii).$\smallskip$

The monotonicity statements in assertion (iv) are immediate consequences of
(\ref{f5_2c4}) and (\ref{f5_2c6}), which completes the proof of Lemma
\ref{sec5_Lem1}.$\medskip$
\end{proof}

\begin{lemma}
\label{sec5_Lem2}We have%
\begin{equation}
\frac{1}{2\pi i}\oint\nolimits_{\gamma_{k}}e^{\lambda(\zeta)+t\,\pi_{\lambda
}(\zeta)}d\zeta\,<\,0\ \ \ \text{for each }\ k=1,\ldots,K.\smallskip
\label{f5_2d1}%
\end{equation}

\end{lemma}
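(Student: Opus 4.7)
The idea is to combine two structural facts---the function $G(\zeta) := \lambda(\zeta) + t\,\pi_\lambda(\zeta)$ is real along $\gamma_k$, and the reflection $\varrho(\gamma_k) = -\gamma_k$ pairs upper-half points with their conjugates below---to rewrite the integral so that the sign is manifest. First I would verify that $G$ is real on $\gamma_k$: on $\gamma_k \setminus \pi_\lambda^{-1}(\mathbb{R})$ this is exactly the defining condition of $\partial D_\pm$, namely $\operatorname{Im}(\lambda + t\,\pi_\lambda) = 0$; at the two points of $\gamma_k \cap \pi_\lambda^{-1}(\mathbb{R})$ singled out in Lemma \ref{sec5_Lem1}(iii), $\pi_\lambda$ is real and $\lambda$ is real by real type. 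Hence $e^{G} > 0$ everywhere on $\gamma_k$.

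Next, I would integrate by parts in the arc-length parameter. Writing $\tilde G(s) := G(\gamma_k(s))$ and $z(s) := \pi_\lambda(\gamma_k(s))$, the closedness of $\gamma_k$ makes the boundary term vanish and gives
\[
\oint_{\gamma_k} e^{G}\,d\pi_\lambda \,=\, -\int_0^{2s_k} e^{\tilde G(s)}\,\tilde G'(s)\,z(s)\,ds.
\]
Then I would invoke the symmetry from Lemma \ref{sec5_Lem1}(ii): since $\varrho$ fixes $\pi_\lambda^{-1}(\mathbb{R})$ pointwise and interchanges the upper and lower arcs, one has $\gamma_k(2s_k - s) = \varrho(\gamma_k(s))$. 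Combined with $\pi_\lambda \circ \varrho = \overline{\pi_\lambda}$ and $G \circ \varrho = \overline{G} = G$ on $\gamma_k$, this yields $z(2s_k - s) = \overline{z(s)}$ and $\tilde G'(2s_k - s) = -\tilde G'(s)$. The substitution $s \mapsto 2s_k - s$ on the lower-half interval folds the integral into
\[
\oint_{\gamma_k} e^{G}\,d\pi_\lambda \,=\, -2i\int_0^{s_k} e^{\tilde G(s)}\,\tilde G'(s)\,\operatorname{Im} z(s)\,ds.
\]

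On $(0, s_k)$ all three factors of the integrand are nonnegative and strictly positive off the finite set $Cr \cap \gamma_k$: $e^{\tilde G} > 0$; $\tilde G' \geq 0$, with equality only at critical points, by the strict monotonicity in Lemma \ref{sec5_Lem1}(iv); and $\operatorname{Im} z > 0$ because $\gamma_k((0, s_k)) \subset \mathcal{R}_+$. Dividing by $2\pi i$ produces $-(1/\pi)$ times a strictly positive integral, giving (\ref{f5_2d1}). The main technical point is the bookkeeping of signs from $\tilde G'(2s_k - s) = -\tilde G'(s)$ combined with the orientation reversal of the lower arc, so that the two halves fold constructively into an imaginary-part expression rather than canceling; the piecewise-analytic nature of $\gamma_k$ and the finitely many critical points in $Cr \cap \gamma_k$ pose no difficulty, since all quantities remain continuous and strict positivity of the final integral is only lost on a finite set.
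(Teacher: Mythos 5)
Your proof is correct and takes essentially the same approach as the paper's: both use the realness (hence positivity) of the integrand $e^{\lambda+t\pi_\lambda}$ on $\gamma_k$, the reflection symmetry $\varrho(\gamma_k)=-\gamma_k$, integration by parts, and the strict monotonicity of Lemma \ref{sec5_Lem1}(iv) together with $\operatorname{Im}\pi_\lambda>0$ on the upper half-arc. The only (immaterial) difference is the order of steps: the paper first folds the integral by the $\varrho$-symmetry onto the upper half-arc and then integrates by parts there, whereas you integrate by parts over the full closed loop (where the boundary term vanishes by closedness) and then fold---the two manipulations commute and yield the identical final half-arc integral.
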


\begin{proof}
We abbreviate the integrand in (\ref{f5_2d1}) by%
\[
g(\zeta):=e^{\lambda(\zeta)+t\,\pi_{\lambda}(\zeta)},\text{ \ \ }\zeta
\in\mathcal{R}_{\lambda}\diagdown\pi_{\lambda}^{-1}(\{\infty\}),
\]
and assume $k\in\{1,\ldots,K\}$ in (\ref{f5_2d1}) to be fixed.

From assertion (i) in Lemma \ref{sec5_Lem1} we know that $\operatorname*{Im}%
g(\zeta)=0$ for all $\zeta\in\gamma_{k}$, from assertion (iv) we further know
that $\operatorname{Re}g(\zeta)=g(\zeta)$ is strictly increasing on
$\gamma_{k}\cap(\mathcal{R}_{+}\diagdown Cr)$, from (\ref{f5_2c2}) that
$\gamma_{k}\cap\,\mathcal{R}_{+}$ is the subarc $\left.  \gamma_{k}\right\vert
_{(0,s_{k})}$,\ and from the proof of assertion (iv) it is evident that also
the following slightly stronger statement%
\begin{equation}
(g\circ\gamma_{k})^{\prime}(s)>0\text{ \ \ for \ \ }0<s<s_{k}\text{ \ and
\ }\gamma_{k}(s)\notin Cr\label{f5_2d2}%
\end{equation}
holds true. It further follows from (\ref{f5_2c2}) that we have%
\begin{equation}
\operatorname*{Im}\pi_{\lambda}\circ\gamma_{k}(0)=\operatorname*{Im}%
\pi_{\lambda}\circ\gamma_{k}(s_{k})=0\text{ \ and \ }\operatorname*{Im}%
\pi_{\lambda}\circ\gamma_{k}(s)>0\text{ \ for \ }0<s<s_{k}.\label{f5_2d3}%
\end{equation}
Let the coordinates $z,x,y$ and the differentials $dz,dx,dy$\ be defined by
$\pi_{\lambda}(\zeta)=z=x+iy\in\mathbb{C}$, $\zeta\in\gamma_{k}$, and
$dz=dx+idy$, and let these coordinates and differentials be lifted from
$\overline{\mathbb{C}}$ onto $\mathcal{R}_{\lambda}$, where we then have
$\zeta=\xi+i\,\eta$ and $d\zeta=d\xi+i\,d\eta$. Taking into consideration that
$\varrho(\gamma_{k})=-\gamma_{k}$, $\varrho(d\zeta)=\overline{d\zeta}$, and
$(g\circ\varrho)(\zeta)=\overline{g(\zeta)}=g(\zeta)$ for all $\zeta\in
\gamma_{k}$, we conclude that%
\begin{align}
& \frac{1}{2\pi i}\oint\nolimits_{\gamma_{k}}g(\zeta)d\zeta=\frac{1}{2\pi
i}\int_{\gamma_{k}\cap D_{+}}\cdots+\frac{1}{2\pi i}\int_{\gamma_{k}\cap
D_{-}}g(\zeta)\left(  d\xi+i\,d\eta\right)  \medskip\nonumber\\
& \text{\ \ \ \ \ \ \ }=\frac{1}{\pi}\int_{\gamma_{k}\cap D_{+}}g(\zeta
)d\eta\,=\,\frac{1}{\pi}\int_{0}^{s_{k}}(g\circ\gamma_{k}%
)(s)\operatorname*{Im}\left(  (\pi_{\lambda}\circ\gamma_{k})^{\prime
}(s)\right)  ds\medskip\nonumber\\
& \text{ \ \ \ \ \ }=-\frac{1}{\pi}\int_{0}^{s_{k}}(g\circ\gamma_{k})^{\prime
}(s)\operatorname*{Im}\left(  \pi_{\lambda}\circ\gamma_{k}(s)\right)  ds\text{
}<\text{\ }0.\label{f5_2d4}%
\end{align}
Indeed, the first three equalities in (\ref{f5_2d4}) are a consequence of the
specific symmetries and antisymmetries with respect to $\varrho$ that have
been listed just before (\ref{f5_2d4}). From the three equalities we consider
the second one in more detail, and concentrate on the transformation of the
second integral after the first equality. We have%
\begin{align*}
\frac{1}{2\pi i}\int_{\gamma_{k}\cap D_{-}}g(\zeta)\left(  d\xi+i\,d\eta
\right)  \,  & =\,\frac{-1}{2\pi i}\int_{\gamma_{k}\cap D_{+}}g(\zeta)\left(
d\xi-i\,d\eta\right)  \medskip\\
& =\,\frac{1}{2\pi i}\int_{\gamma_{k}\cap D_{+}}g(\zeta)\left(  -d\xi
+i\,d\eta\right)  ,
\end{align*}
which verifies the second equality. The last equality in (\ref{f5_2d4})
follows from integration by parts together with the equalities in
(\ref{f5_2d3}). The inequality in (\ref{f5_2d4}) is then a consequence of
(\ref{f5_2d2}) and the inequality in (\ref{f5_2d3}).$\medskip$
\end{proof}

\begin{proof}
[Proof of Proposition \ref{sec5_prop1}]The chain $\gamma$ of oriented Jordan
curves (\ref{f5_2c1}) in Lemma \ref{sec5_Lem1} is the candidate for the chain
$\gamma$ in Proposition \ref{sec5_prop1}. Equality (\ref{f5_2a1}) and
inequality (\ref{f5_2a2}) have been verified by the Lemmas \ref{sec5_Lem1} and
\ref{sec5_Lem2}, respectively. Identity (\ref{f5_2a3}) and its consequence
(\ref{f5_2a4}) remain to be proved.

As integration paths $C_{j}$, $j=1,\ldots,I$, on the right-hand side of
(\ref{f5_2a3}) we take the common Jordan curve $C$ from (iv) in the listing in
the last subsection. The set $\pi_{\lambda}^{-1}(\overline{\operatorname*{Ext}%
(C)})$ consists of $n$ disjoint components if $C$ is chosen sufficiently close
to infinity; it then also follows that all branch points of $\lambda$ are
contained in $\mathcal{R}_{\lambda}\diagdown\allowbreak\pi_{\lambda}%
^{-1}(\overline{\operatorname*{Ext}(C)})$. Further, we have%
\begin{equation}
\operatorname*{Im}(\lambda_{j}(z)+t\,z)\left\{
\begin{array}
[c]{ll}%
\,>\,0 & \text{ \ for all \ }z\in C,\text{ }\operatorname*{Im}(z)>0\\
\,<\,0 & \text{ \ for all \ }z\in C,\text{ }\operatorname*{Im}(z)<0
\end{array}
\right.  ,\text{ }j=1,\ldots,I,\label{f5_2e1}%
\end{equation}
and%
\begin{equation}
\operatorname*{Im}(\lambda_{j}(z)+t\,z)\left\{
\begin{array}
[c]{ll}%
\,<\,0 & \text{ \ for all \ }z\in C,\text{ }\operatorname*{Im}(z)>0\\
\,>\,0 & \text{ \ for all \ }z\in C,\text{ }\operatorname*{Im}(z)<0
\end{array}
\right.  ,\text{ }j=I+1,\ldots,n.\label{f5_2e2}%
\end{equation}
A choice of $C$\ with these properties is possible because of (\ref{f2_1a1})
in Lemma \ref{sec2_Lem1} in Subsection \ref{sec2_1} and the assumption that
$b_{1}\leq\,\cdots\,\leq b_{I}<t<b_{I+1}\leq\,\cdots\,\leq\,b_{n}$.

Next we define%
\begin{equation}
D_{0}:=D\diagdown\pi_{\lambda}^{-1}(\overline{\operatorname*{Ext}(C)}%
)\subset\mathcal{R}_{\lambda}.\label{f5_2e3}%
\end{equation}
From (\ref{f5_2e1}), (\ref{f5_2e2}), and (\ref{f5_2b1}) it follows that
exactly $I$ of the $n$ components $\widehat{C}_{j}\subset\mathcal{R}_{\lambda
}$, $j=1,\ldots,n$, of $\pi_{\lambda}^{-1}(\overline{\operatorname*{Ext}(C)})$
are contained in $D$. Each $\widehat{C}_{j}$ lies in a different sheet
$S_{\lambda}^{(j)}$, $j=1,\ldots,n,$ of the system of standard sheets
introduced in Lemma \ref{sec2_Lem4} in Subsection \ref{sec2_2}. The
enumeration of the sheets $S_{\lambda}^{(j)}$ corresponds to that of the
functions $\lambda_{j}$ as stated in (\ref{f2_2b1}). Let $\widetilde{C}%
_{j}\subset\mathcal{R}_{\lambda}$, $j=1,\ldots,n$, denote the lifting of the
oriented Jordan curve $C\subset\mathbb{C}$ onto $S_{\lambda}^{(j)}%
\subset\mathcal{R}_{\lambda}$. We then have $\pi_{\lambda}(\widetilde{C}%
_{j})=C_{j}=C$ for $j=1,\ldots,n$, and from (\ref{f2_2b1}) it follows that%
\begin{equation}
\lambda(\zeta)\,=\,\lambda_{j}(\pi_{\lambda}(\zeta))\text{ \ \ for \ \ }%
\zeta\in\widetilde{C}_{j}\text{, \ }j=1,\ldots,n\text{.}\label{f5_2e6}%
\end{equation}

Since $\widetilde{C}_{j}=\partial\widehat{C}_{j}$ for $j=1,\ldots,n$, the open
set $D_{0}$ lies to the left of each $\widetilde{C}_{j}$. Together with
assertion (i) of Lemma \ref{sec5_Lem1}, it follows from (\ref{f5_2e3}) that
the chain%
\begin{equation}
\gamma+\widetilde{C}_{1}+\cdots+\widetilde{C}_{I}\,=\,\gamma_{1}+\cdots
+\gamma_{K}+\widetilde{C}_{1}+\cdots+\widetilde{C}_{I}\subset\mathcal{R}%
_{\lambda}\label{f5_2e4}%
\end{equation}
forms the contour $\partial D_{0}$ with an orientation for which $D_{0}$ lies
everywhere to its left. By Cauchy's Theorem we have%
\begin{equation}
\frac{1}{2\pi i}\oint\nolimits_{\gamma+\widetilde{C}_{1}+\cdots+\widetilde
{C}_{I}}e^{\lambda(\zeta)+t\,\pi_{\lambda}(\zeta)}d\zeta=0.\label{f5_2e5}%
\end{equation}
Identity (\ref{f5_2a3}) follows immediately from (\ref{f5_2e5}) and
(\ref{f5_2e6}). Inequality (\ref{f5_2a4}) is a consequence of (\ref{f5_2a2})
and (\ref{f5_2a3}) since we have%
\begin{align}
\sum_{b_{j}\,<\,t}\frac{1}{2\pi i}\oint\nolimits_{C_{j}}e^{\lambda_{j}\left(
\zeta\right)  +t\,\zeta}d\zeta & =\sum_{j=1}^{I}\frac{1}{2\pi i}%
\oint\nolimits_{C_{j}}e^{\lambda_{j}\left(  \zeta\right)  +t\,\zeta}%
d\zeta\nonumber\\
& =\frac{-1}{2\pi i}\oint\nolimits_{\gamma}e^{\lambda(\zeta)+t\,\pi_{\lambda
}(\zeta)}d\zeta\,>\,0.\label{f5_2e7}%
\end{align}

\end{proof}

\subsection{\label{sec5_3}A Preliminary Proof of Positivity}

\qquad With Proposition \ref{sec5_prop1} we are prepared for the proof of
positivity of the measure $\mu_{A,B}$ in Theorems \ref{sec1_thm2} under
Assumption 3, which then completes the proof of Theorems \ref{sec1_thm2} under
Assumption 3.

\begin{proof}
[Proof of Positivity under Assumption 3]From representation (\ref{f1_3a}) in
Theorem \ref{sec1_thm2} it is obvious that the discrete part%
\begin{equation}
d\mu_{d}\,=\,\sum_{j=1}^{n}e^{\widetilde{a}_{jj}}\delta_{\widetilde{b}_{j}%
}\,=\,\sum_{j=1}^{n}e^{a_{jj}}\delta_{b_{j}}\label{f5_3b2}%
\end{equation}
of the measure $\mu_{A,B}$\ is positive. From (\ref{sec5_4}) of Proposition
\ref{sec5_prop1} it follows that the density function $w_{A,B}$ in
(\ref{f1_3b}) of Theorem \ref{sec1_thm2} is positive on $\left[  \widetilde
{b}_{1},\widetilde{b}_{n}\right]  \diagdown\{\widetilde{b}_{1},\ldots
,\widetilde{b}_{n}\}=\left[  b_{1},b_{n}\right]  \diagdown\{b_{1},\ldots
,b_{n}\}$, which proves the positivity of the measure $\mu_{A,B}$. Notice that
the last identity holds because of Assumption 1 in Section \ref{sec1b}.
\end{proof}

Under Assumption 3, relation (\ref{f1_3d}) in Theorem \ref{sec1_thm2} is
proved in a slightly stronger form.

\begin{lemma}
\label{sec5_Lem3}Under Assumption 3 we have%
\begin{equation}
\operatorname*{supp}\left(  \mu_{A,B}\right)  \,=\,\left[  b_{1},b_{n}\right]
\,=\,\left[  \widetilde{b}_{1},\widetilde{b}_{n}\right]  .\label{f5_3b1}%
\end{equation}

\end{lemma}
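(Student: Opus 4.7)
The plan is to show that equality (\ref{f5_3b1}) follows essentially for free from the strict positivity statement (\ref{f5_2a4}) of Proposition \ref{sec5_prop1} together with the explicit form (\ref{f1_3a}) of the measure $\mu_{A,B}$. The containment $\operatorname{supp}(\mu_{A,B})\subseteq[b_1,b_n]=[\widetilde{b}_1,\widetilde{b}_n]$ has already been verified in (\ref{f1_3d}) (and the equality of intervals is just Assumption~1 from Section~\ref{sec1b}), so only the reverse inclusion $[b_1,b_n]\subseteq\operatorname{supp}(\mu_{A,B})$ requires attention.

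First I would handle the endpoints and the atomic part. The discrete component of $\mu_{A,B}$ in (\ref{f1_3a}) contains a Dirac mass at each $b_j$ with strictly positive weight $e^{a_{jj}}$ (grouping together any $j$ with equal $b_j$, the weights still add to a positive number). Hence $\{b_1,\ldots,b_n\}\subseteq\operatorname{supp}(\mu_{A,B})$; in particular the two endpoints $b_1$ and $b_n$ lie in the support.

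Next I would fill in the interior. Fix any $t\in[b_1,b_n]\setminus\{b_1,\ldots,b_n\}$. Then $t$ lies in some open interval $(b_I,b_{I+1})$ with $b_I<b_{I+1}$ and $1\le I\le n-1$. Inequality (\ref{f5_2a4}) of Proposition~\ref{sec5_prop1}, which is precisely the content of the preceding proof of positivity, gives
\begin{equation}
w_{A,B}(t)\,=\,\sum_{b_j<t}\frac{1}{2\pi i}\oint\nolimits_{C_j}e^{\lambda_j(\zeta)+t\,\zeta}d\zeta\,>\,0.\nonumber
\end{equation}
Since $w_{A,B}$ is the restriction of an entire function on the open interval $(b_I,b_{I+1})$ (by the last assertion of Theorem~\ref{sec1_thm2}, already verified in Subsection~\ref{sec3_3}) and is strictly positive at the arbitrary point $t$, it is positive throughout a neighborhood of $t$; thus $t\in\operatorname{supp}(\mu_{A,B})$. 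Combining this with the inclusion from the previous paragraph yields $[b_1,b_n]\subseteq\operatorname{supp}(\mu_{A,B})$, and together with (\ref{f1_3d}) this gives (\ref{f5_3b1}).

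There is essentially no main obstacle: the hard analytic work has already been done in Proposition~\ref{sec5_prop1}, and the lemma is, under Assumption~3, a direct packaging of (\ref{f5_2a4}) with the atomic part of (\ref{f1_3a}). The only minor bookkeeping point is to rule out pathological cancellation when several $b_j$ coincide, but this is taken care of by the observation that coincident $b_j$ only produce a single atom of larger positive mass, while the strict inequality in (\ref{f5_2a4}) continues to hold on every nondegenerate sub-interval $(b_I,b_{I+1})$.
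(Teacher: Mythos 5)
Your proof is correct and follows the same route as the paper, which simply declares the lemma an immediate consequence of the strict inequality (\ref{f5_2a4}) in Proposition \ref{sec5_prop1}; your write-up just makes explicit the bookkeeping (atoms at the $b_j$, strict positivity of $w_{A,B}$ on each nondegenerate interval $(b_I,b_{I+1})$, and the already-established inclusion (\ref{f1_3d})) that the paper leaves implicit.
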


\begin{proof}
The lemma is an immediate consequence of the strict inequality in
(\ref{f5_2a4}) in Proposition \ref{sec5_prop1}.$\medskip$
\end{proof}

\subsection{\label{sec5_4}The General Case}

\qquad In the present subsection we show that Assumption 3, which has played a
central role in the last subsection, is actually superfluous for proof of
positivity of the measure $\mu_{A,B}$ in Theorems \ref{sec1_thm2}. For this
purpose we have to revisit some definitions and results from Subsections
\ref{sec2_1} and \ref{sec2_2}.

If the polynomial $g(\lambda,t)$\ in (\ref{f2_1a2}) is not irreducible, then
it can be factorized into $m>1$ irreducible factors $g_{(l)}(\lambda,t) $,
$l=1,\ldots,m$, of degree $n_{l}$ as already stated in (\ref{f2_1a2}). For the
partial degrees $n_{l}$ we have $n_{1}+\cdots+n_{m}=n$. Each polynomial
$g_{(l)}(\lambda,t)$, $l=1,\ldots,m$, can be normalized in accordance to
(\ref{f2_1a4}).

The $m$ polynomial equations (\ref{f2_1a3}) define $m$ algebraic functions
$\lambda_{(l)}$, $l=1,\ldots,\allowbreak m$, and each of them has a Riemann
surface $\mathcal{R}_{\lambda,l}$, $l=1,\ldots,m$, with $n_{l}$ sheets over
$\overline{\mathbb{C}}$ as its natural domain of the definition. The solution
$\lambda$ of equation (\ref{f2_1a2}) consists of these $m$ algebraic
functions, and its domain of definition is the union (\ref{f2_2a1}) of the $m$
Riemann surfaces $\mathcal{R}_{\lambda,l}$, $l=1,\ldots,m$.

Each algebraic function $\lambda_{(l)}$, $l=1,\ldots,m$, possesses $n_{l}$
branches $\lambda_{l,i}$, $i=1,\ldots,n_{l}$, which are assumed to be chosen
analogously to Definition \ref{sec2_def2} in Subsection \ref{sec2_2}, but with
a new form of indices. After (\ref{f2_1a4}) we have denoted by $j:\{\,(l,i)$,
$i=1,\ldots,n_{l}$, $l=1,\ldots,m\,\}\,\longrightarrow\,\{\,1,\ldots,n\,\}$ a
bijection that establishes a one-to-one correspondence between the two types
of indices that are relevant here. We can assume that this correspondence has
been chosen in such a way that%
\begin{equation}
b_{j(l,1)}\,\leq\cdots\leq\,b_{j(l,n_{l})}\text{ \ \ for each \ \ }%
l=1,\ldots,m,\label{f5_3c1}%
\end{equation}
and in the new system of indices (\ref{f2_1a1}) in Lemma \ref{sec2_Lem1} takes
the form%
\begin{equation}
\lambda_{j(l,i)}(t)\,=\,\lambda_{l,i}(t)\,=\,a_{j(l,i),j(l,i)}-b_{j(l,i)}%
t+\text{O}\left(  1/t\right)  \text{ \ \ as \ \ }t\rightarrow\infty
\label{f5_3c2}%
\end{equation}
for $i=1,\ldots,n_{l}$, $l=1,\ldots,m$.

We define%
\begin{equation}
w_{A,B,l}(t)\,:=\,\sum_{i=1,\text{ }b_{j(l,i)}\,<\,t}^{n_{l}}\frac{1}{2\pi
i}\oint\nolimits_{C_{l,i}}e^{\lambda_{l,i}(\zeta)+t\,\zeta}d\zeta\text{
\ for\ \ }l=1,\ldots,m\label{f5_3c3}%
\end{equation}
with $C_{l,i}=C_{j(l,i)}$. From (\ref{f5_3c3}) it follows that in
(\ref{f1_3b}) and (\ref{f1_3c}) in Theorem \ref{sec1_thm2} we have%
\begin{equation}
w_{A,B}(t)\,=\,\sum_{l=1}^{m}w_{A,B,l}(t).\label{f5_3c4}%
\end{equation}
Under\ Assumption 3 the new definitions remain consistent in a trivial way
with $m=1$.

In the general proof of positivity of the measure $\mu_{A,B}$ the next
proposition will take the role of Proposition \ref{sec5_prop1}.$\smallskip$

\begin{proposition}
\label{sec5_prop2}(i) \ For each $l\in\{\,1,\ldots,m\,\}$ with $n_{l}=1$ we
have%
\begin{equation}
w_{A,B,l}(t)\,=\,0\text{ \ \ for all \ \ }t\in\mathbb{R}_{+}.\label{f5_3d1}%
\end{equation}
(ii) \ For each $l\in\{\,1,\ldots,m\,\}$ with $n_{l}>1$ we have%
\begin{equation}
w_{A,B,l}(t)\,\left\{
\begin{array}
[c]{ccl}%
>\,0 & \text{ \ for all \ } & t\in\left[  b_{j(l,1)},b_{j(l,n_{l})}\right]
\diagdown\{b_{j(l,1)},\ldots,b_{j(l,n_{l})}\}\medskip\\
=\,0 & \text{ \ for all\ \ } & t\in\mathbb{R}_{+}\diagdown\left[
b_{j(l,1)},b_{j(l,n_{l})}\right]  .
\end{array}
\right. \label{f5_3d2}%
\end{equation}
Each function $w_{A,B,l}$, $l=1,\ldots,m$, is the restriction of an entire
function in each interval of $\left[  b_{j(l,1)},b_{j(l,n_{l})}\right]
\diagdown\{b_{j(l,1)},\ldots,b_{j(l,n_{l})}\}$.$\smallskip$
\end{proposition}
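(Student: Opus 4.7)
\medskip

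\noindent\textit{Proof proposal for Proposition \ref{sec5_prop2}.}

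The proposition separates cleanly into two cases, and the plan is to dispose of (i) by an elementary observation and to reduce (ii) to Proposition \ref{sec5_prop1} applied one irreducible factor at a time.

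For assertion (i), suppose $n_{l}=1$. Then $g_{(l)}(\lambda,t)=\lambda-p(t)$ for some $p\in\mathbb{C}[t]$, so the single branch $\lambda_{l,1}$ coincides with the polynomial $p$. The asymptotic (\ref{f5_3c2}) then forces $p(t)=a_{j(l,1),j(l,1)}-b_{j(l,1)}\,t$ identically, because any nonzero higher-degree part or constant term would be incompatible with an $\mathrm{O}(1/t)$ remainder. Consequently the integrand $e^{\lambda_{l,1}(\zeta)+t\,\zeta}$ is entire in $\zeta$, and the single term that could appear in (\ref{f5_3c3}) is the contour integral of an entire function, hence zero; if $b_{j(l,1)}\geq t$ the sum in (\ref{f5_3c3}) is empty. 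Either way $w_{A,B,l}\equiv 0$.

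For assertion (ii), fix $l$ with $n_{l}>1$. The point is that $g_{(l)}$ is irreducible by construction, so $\lambda_{(l)}$ lives on its own compact Riemann surface $\mathcal{R}_{\lambda,l}$ with exactly $n_{l}$ sheets; this is precisely the setting of Assumption 3 with $n$, $\{\lambda_{j}\}$, $\{b_{j}\}$, $\mathcal{R}_{\lambda}$ replaced by $n_{l}$, $\{\lambda_{l,i}\}_{i=1}^{n_{l}}$, $\{b_{j(l,i)}\}_{i=1}^{n_{l}}$, $\mathcal{R}_{\lambda,l}$. All the ingredients feeding into Proposition \ref{sec5_prop1} survive this restriction: the branches $\lambda_{l,i}$ are of real type (Lemma \ref{sec2_Lem2} applies to each irreducible factor separately), they admit the asymptotic expansion (\ref{f5_3c2}) at infinity (this is (\ref{f2_1a1}) of Lemma \ref{sec2_Lem1} read through the bijection $j$), and $\sum_{i=1}^{n_{l}}e^{\lambda_{l,i}(\zeta)}$ is entire because the monodromy group acts transitively on the $n_{l}$ branches of the single irreducible algebraic function $\lambda_{(l)}$ (the analogue of Lemma \ref{sec2_Lem5}). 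I would therefore invoke Proposition \ref{sec5_prop1} verbatim on $\mathcal{R}_{\lambda,l}$: for each $I\in\{1,\ldots,n_{l}-1\}$ with $b_{j(l,I)}<b_{j(l,I+1)}$ and each $t$ in the open interval $(b_{j(l,I)},b_{j(l,I+1)})$, inequality (\ref{f5_2a4}) applied inside $\mathcal{R}_{\lambda,l}$ gives exactly $w_{A,B,l}(t)>0$. The union of these open intervals is $[b_{j(l,1)},b_{j(l,n_{l})}]\setminus\{b_{j(l,1)},\ldots,b_{j(l,n_{l})}\}$, which is the first line of (\ref{f5_3d2}).

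It remains to check the vanishing outside $[b_{j(l,1)},b_{j(l,n_{l})}]$ and the entireness statement. If $t<b_{j(l,1)}$ the index set in (\ref{f5_3c3}) is empty, so $w_{A,B,l}(t)=0$. If $t>b_{j(l,n_{l})}$ the sum runs over all $n_{l}$ branches, and contracting the $n_{l}$ contours to a single curve $C$ enclosing all branch points of $\lambda_{(l)}$ reduces the integrand to $e^{t\,\zeta}\sum_{i=1}^{n_{l}}e^{\lambda_{l,i}(\zeta)}$, an entire function of $\zeta$; the contour integral therefore vanishes, which is the analogue of Lemma \ref{sec3_lem1} for the factor $\lambda_{(l)}$. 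Entireness of $w_{A,B,l}$ on each subinterval of $[b_{j(l,1)},b_{j(l,n_{l})}]\setminus\{b_{j(l,1)},\ldots,b_{j(l,n_{l})}\}$ is immediate from (\ref{f5_3c3}): the index set is constant on each such subinterval and the integrand depends entirely on $t$ with analytic dependence on $\zeta\in C_{l,i}$, so differentiation under the integral sign produces an entire function of $t$.

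The one step that carries any real weight is the transfer of Proposition \ref{sec5_prop1} to the factor $\mathcal{R}_{\lambda,l}$, and the main thing to verify is that the choice of contour $C$ made inside the proof of Proposition \ref{sec5_prop1} — namely, a Jordan curve on which exactly the branches with $b_{j(l,i)}<t$ are the ones whose imaginary part of $\lambda_{l,i}(z)+tz$ has the sign pattern (\ref{f5_2e1}) — can still be arranged in the factor. This is automatic from (\ref{f5_3c2}) together with $b_{j(l,1)}\leq\cdots\leq b_{j(l,n_{l})}$ in (\ref{f5_3c1}), so no new obstacle appears.
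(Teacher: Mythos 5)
Your proposal is correct and follows essentially the same route as the paper: the vanishing statements come from the analogue of Lemma \ref{sec3_lem1} for the complete set of branches of each irreducible factor $\lambda_{(l)}$ (in the case $n_{l}=1$ this reduces to the observation that the single branch is the linear polynomial forced by (\ref{f5_3c2})), and the positivity in the first line of (\ref{f5_3d2}) is obtained by rerunning Lemmas \ref{sec5_Lem1}, \ref{sec5_Lem2} and Proposition \ref{sec5_prop1} on $\mathcal{R}_{\lambda,l}$ with $\lambda_{(l)}$ and the branches $\lambda_{l,i}$ in place of $\lambda$ and $\lambda_{j}$. Your explicit check that the contour sign pattern (\ref{f5_2e1}) survives the restriction to one factor is a useful detail the paper leaves implicit, but it does not change the argument.
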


\begin{proof}
Equality (\ref{f5_3d1}) and the equality in the second line of (\ref{f5_3d2})
follow from (\ref{f5_3c3}) and the analogue of Lemma \ref{sec3_lem1} in
Subsection \ref{sec3_1}, which also holds for each complete set of branches
$\lambda_{l,i}$, $i=1,\ldots,n_{l}$, of the algebraic function $\lambda_{(l)}%
$, $l=1,\ldots,m$. In case of the second line in (\ref{f5_3d2}) we have also
to take in consideration the ordering (\ref{f5_3c1}).

For the proof of the inequality in the first line of (\ref{f5_3d2}) we have to
redo the analysis in the proofs of Lemmas \ref{sec5_Lem1}, \ref{sec5_Lem2},
and of Proposition \ref{sec5_prop1}, but now with the role of algebraic
function $\lambda$, the Riemann surface $\mathcal{R}_{\lambda}$, and the
branches $\lambda_{j}$, $j=1,\ldots,n$, taken over by $\lambda_{(l)}$,
$\mathcal{R}_{\lambda,l}$, and $\lambda_{l,i}$, $i=1,\ldots,n_{l}$,
respectively, for each $l=1,\ldots,m$ with $n_{l}>1$. It is not difficult to
see that this transition is a one-to-one copying of all steps of the earlier
analysis, and we will not go into further details. The inequality in the first
line of (\ref{f5_3d2}) follows then together with (\ref{f5_3c3}) as an
analogue of (\ref{f5_2a4}) in Proposition \ref{sec5_prop1}.

It follows from (\ref{f5_3c3}) that each $w_{A,B,l}$ is the restriction of an
entire function in each interval in $\left[  b_{j(l,1)},b_{j(l,n_{l})}\right]
\diagdown\{b_{j(l,1)},\ldots,b_{j(l,n_{l})}\}$ for $l=1,\ldots,m$.
\end{proof}

\subsection{\label{sec5_5}General Proof of Positivity}

\qquad With (\ref{f5_3c3}) and Proposition \ref{sec5_prop2} we are prepared
for the proof of positivity without Assumption 3.

\begin{proof}
[General Proof of Positivity]Since the discrete part (\ref{f5_3b2}) of the
measure $\mu_{A,B}$\ is positive, it remains only to show that the density
function $w_{A,B}$ in (\ref{f1_3b}) of Theorem \ref{sec1_thm2} is non-negative
in $\left[  \widetilde{b}_{1},\widetilde{b}_{n}\right]  \diagdown
\{\widetilde{b}_{1},\ldots,\widetilde{b}_{n}\}=\left[  b_{1},b_{n}\right]
\diagdown\{b_{1},\ldots,\allowbreak b_{n}\}$. But this follows immediately
from (\ref{f5_3d2}) and (\ref{f5_3d1}) in Proposition \ref{sec5_prop2}
together with (\ref{f5_3c3}). Notice that because of Assumption 1 in Section
\ref{sec1b} we have $\widetilde{b}_{j}=b_{j}$ for $j=1,\ldots,n$.
\end{proof}

\section{\label{sec5b}Summing up the Proofs of Theorems \ref{sec1_thm1} and
\ref{sec1_thm2}}

All assertions of Theorem \ref{sec1_thm2}, except for the positivity of the
measure $\mu_{A,B}$, have been proved in Section \ref{sec3}, and after the
proof of positivity in the last section, the proof of Theorem \ref{sec1_thm2}
is complete.

Theorem \ref{sec1_thm1} is an immediate consequence of Theorem \ref{sec1_thm2}.

\section{\label{sec6}Proof of Proposition \ref{sec1_prop1}}

\qquad The proof of Proposition \ref{sec1_prop1} is given in two steps. In the
first one, the formulae (\ref{f1_1d}) and (\ref{f1_1e}) are verified. After
that in Subsection \ref{sec6_2}, it is shown that the density function
$w_{A,B}(x)$ in (\ref{f1_1e}) is positive for $b_{1}<x<b_{2}$. In the last
subsection, representation (\ref{f1_1e}) of the density function $w_{A,B}$ in
Proposition \ref{sec1_prop1} is compared with the corresponding result in
\cite{MehtaKumar76}.\smallskip

\subsection{Proof of the Representations (\ref{f1_1d}) and (\ref{f1_1e})}

\qquad Representation (\ref{f1_1d}) of the general structure of the measure
$\mu_{A,B}$ follows as a special case from the analogous result (\ref{f1_3a})
in Theorem \ref{sec1_thm2}. From (\ref{f1_3b}) we further deduce that the
density function $w_{A,B}$ in (\ref{f1_1d}) can be represented as%
\begin{equation}
w_{A,B}(x)=\frac{1}{2\pi i}\oint\nolimits_{C_{1}}e^{\lambda_{1}(\zeta
)+x\,\zeta}d\zeta\text{ \ \ \ for \ \ }b_{1}<x<b_{2}\label{f6_1a1}%
\end{equation}
with $\lambda_{1}$ the branch of the algebraic function $\lambda$ of degree
$2$ defined by the polynomial equation%
\begin{align}
g(\lambda,t)  & =\det\left(  \lambda\,I-(A-t\,B)\right)  \medskip\nonumber\\
& =(\lambda+b_{1}t-a_{11})(\lambda+b_{2}t-a_{22})-|a_{12}|^{2}=0\label{f6_1a2}%
\end{align}
that satisfies%
\begin{equation}
\lambda_{1}(t)=a_{11}-b_{1}t\,+\,O(t^{-1})\text{ \ \ as \ \ }t\rightarrow
\infty.\label{f6_1a3}%
\end{equation}
Further, the integration path $C_{1}$ in (\ref{f6_1a1}) is\ a positively
oriented Jordan curve that contains all branch points of the function
$\lambda$ in its interior. From (\ref{f6_1a2}) and (\ref{f6_1a3}) it follows
that $\lambda_{1}$ is explicitly given by%
\begin{equation}
\lambda_{1}(t)=\frac{1}{2}\left[  (a_{22}+a_{11})-(b_{2}+b_{1})\,t+\sqrt
{\left[  (a_{11}-a_{22})+(b_{2}-b_{1})\,t\right]  ^{2}+4\,|a_{12}|^{2}}\right]
\label{f6_1a4}%
\end{equation}
with the sign of the square root in (\ref{f6_1a4}) chosen in such a way that
$\sqrt{\cdots}\approx(b_{2}-b_{1})\,t$ for $t$\ near $\infty$. Evidently,
$\lambda_{1}$ has the two branch points%
\begin{equation}
t_{1,2}=\frac{a_{22}-a_{11}}{b_{2}-b_{1}}\pm i\,\frac{2\,|a_{12}|}{b_{2}%
-b_{1}}.\label{f6_1b1}%
\end{equation}

The main task is now to transform the right-hand side of (\ref{f6_1a1}) into
the more explicit expression in (\ref{f1_1e}). In order to simplify the
exponent in (\ref{f6_1a1}), we introduce a new variable $v$ by the
substitution%
\begin{equation}
t(v):=\frac{a_{22}-a_{11}}{b_{2}-b_{1}}+\frac{2}{b_{2}-b_{1}}\,v,\text{
\ \ }v\in\mathbb{C},\label{f6_1b2}%
\end{equation}
which leads to%
\begin{align}
& \left(  \lambda_{1}\circ t\right)  (v)+x\,t(v)\medskip\nonumber\\
& \text{ \ \ \ \ \ \ }=\frac{a_{11}(b_{2}-x)+a_{22}(x-b_{1})}{b_{2}-b_{1}%
}\,+\,\frac{2\,x-(b_{2}+b_{1})}{b_{2}-b_{1}}\,v\,+\,\sqrt{|a_{12}|^{2}+v^{2}%
}\text{ \ \ \ \ \ \ }\medskip\label{f6_1b3}\\
& \text{ \ \ \ \ \ \ }=\frac{a_{11}(b_{2}-x)+a_{22}(x-b_{1})}{b_{2}-b_{1}%
}\,+\,g(v)\nonumber
\end{align}
with%
\begin{equation}
g(v):=\frac{2\,x-(b_{2}+b_{1})}{b_{2}-b_{1}}\,v\,+\,\sqrt{|a_{12}|^{2}+v^{2}%
}.\label{f6_1b4}%
\end{equation}
Notice that if $x$ moves between $b_{1}$ and $b_{2}$, then the first term in
the second line of (\ref{f6_1b3}) moves between $a_{11}$ and $a_{22}$, and the
coefficient in front of $v$ in the second term moves between $-1$ and $1$. The
assumption made after (\ref{f6_1a4}) with respect to the square root
transforms into $\sqrt{|a_{12}|^{2}+v^{2}}\approx v$ for $v$\ near $\infty$.
It is evident that $g$ is analytic and single-valued throughout $\overline
{\mathbb{C}}\diagdown\lbrack-i\,|a_{12}|,\,\allowbreak i\,|a_{12}|]$. From
(\ref{f6_1b3}) and (\ref{f6_1a1}) we deduce the representation%
\begin{equation}
w_{A,B}(x)=\frac{2}{b_{2}-b_{1}}\exp\left(  \frac{a_{11}(b_{2}-x)+a_{22}%
(x-b_{1})}{b_{2}-b_{1}}\right)  \frac{1}{2\pi i}\oint\nolimits_{C_{1}}%
e^{g(v)}dv,\label{f6_1c1}%
\end{equation}
where again $C_{1}$ is a positively oriented Jordan curve, which is contained
in the ring domain $\mathbb{C}\diagdown\left[  -i\,|a_{12}|,\,i\,|a_{12}%
|\right]  $. Shrinking this curve to the interval $[-i\,|a_{12}|,\,\allowbreak
i\,|a_{12}|]$ yields that%
\begin{align}
& w_{A,B}(x)\,=\,\frac{1}{(b_{2}-b_{1})\,\pi}\exp\left(  \frac{a_{11}%
(b_{2}-x)+a_{22}(x-b_{1})}{b_{2}-b_{1}}\right)  \times\medskip\label{f6_1c2}\\
& \text{ \ \ \ \ \ \ \ \ \ }\times\int_{-|a_{12}|}^{|a_{12}|}\exp\left(
-i\,\frac{b_{2}+b_{1}-2\,x}{b_{2}-b_{1}}\,v\right)  \left[  e^{\sqrt
{|a_{12}|^{2}-v^{2}}}-e^{-\sqrt{|a_{12}|^{2}-v^{2}}}\right]
dv,\ \ \ \nonumber
\end{align}
and further that%
\begin{align}
& w_{A,B}(x)\,=\,\frac{4}{(b_{2}-b_{1})\,\pi}\exp\left(  \frac{a_{11}%
(b_{2}-x)+a_{22}(x-b_{1})}{b_{2}-b_{1}}\right)  \times\medskip\label{f6_1c3}\\
& \text{ \ \ \ \ \ \ \ \ \ \ \ \ \ \ \ \ \ \ \ \ }\times\int_{0}^{|a_{12}%
|}\cos\left(  \frac{b_{2}+b_{1}-2\,x}{b_{2}-b_{1}}\,v\right)  \,\sinh\left(
\sqrt{|a_{12}|^{2}-v^{2}}\right)  dv,\ \ \ \nonumber
\end{align}
which proves formula (\ref{f1_1e}).

\subsection{\label{sec6_2}The Positivity of $w_{A,B}$}

\qquad Since Proposition \ref{sec1_prop1} is a special case of Theorem
\ref{sec1_thm2}, and since the matrices $A$ and $B$ have been given in the
special form of Assumption 3 in Subsection \ref{sec5_1}, the positivity of
$w_{A,B}(x)$ for $b_{1}<x<b_{2}$ has in principle already been proved by
Proposition \ref{sec5_prop1} in Subsection \ref{sec5_2}. However, the
prominence of the positivity problem in the BMV conjecture may justify an ad
hoc proof for the special case of dimension $n=2$, which is simpler than the
general approach in Section \ref{sec5}, and may also serve as an illustration
for the basic ideas in this approach.\smallskip

From (\ref{f6_1a1}), (\ref{f6_1b3}), (\ref{f6_1b4}), and (\ref{f6_1c1}), it
follows that we have only to prove that%
\begin{align}
& I_{0}\,:=\,\frac{1}{2\pi i}\oint\nolimits_{C_{1}}e^{g(\zeta)}d\zeta
\medskip\nonumber\\
& \text{ \ \ }=\,\frac{2}{\pi}\int_{0}^{a}\cos\left(  b\,v\right)
\,\sinh\left(  \sqrt{a^{2}-v^{2}}\right)  dv\,>\,0\label{f6_2a1}%
\end{align}
with the function $g$ defined in (\ref{f6_1b4}), $a$ and $b$ abbreviations for%
\begin{equation}
a\,:=\,|a_{12}|\text{ \ \ and \ \ }b\,:=\,b(x)\,=\,\frac{2\,x-(b_{2}+b_{1}%
)}{b_{2}-b_{1}},\text{ \ \ respectively,}\label{f6_2a2}%
\end{equation}
and $C_{1}$ a positively oriented integration path in the ring domain
$\mathbb{C}\diagdown\left[  -i\,a,\,i\,a\right]  $.

Obviously, we have $-1<b(x)<1$ for $b_{1}<x<b_{2}$. The value $I_{0}$ of the
integral in the second line of (\ref{f6_2a1}) depends evenly on the parameter
$b$, and $I_{0}$ is obviously positive for $b=0$. Consequently, we can,
without loss of generality, restrict our investigation to values of
$x\in\left(  b_{1},b_{2}\right)  $ that correspond to values $b\in\left(
-1,0\right)  $, and they are $b_{1}\,<\,x\,<\,(b_{1}+b_{2})/2$.

For a fixed value $x\in\left(  b_{1},(b_{1}+b_{2})/2\right)  $ we now study
the behavior of the function $g$ of (\ref{f6_1b4}) in $\mathbb{C}%
\diagdown\left[  -i\,a,\,i\,a\right]  $. Because of the convention with
respect to the sign of the square root in (\ref{f6_1b4}), we have%
\begin{equation}
g(z)\,\approx\,(1+b)\,z\ \text{\ \ \ for \ \ }z\text{ }\approx\text{ }%
\infty.\label{f6_2a3}%
\end{equation}
The function $\operatorname*{Im}g$ is continuous in $\mathbb{C}$, harmonic in
$\mathbb{C}\diagdown\left[  -i\,a,\,i\,a\right]  $, we have
$\operatorname*{Im}g(\overline{z})=-\operatorname*{Im}g(z)$ for $z\in
\mathbb{C}$, and%
\begin{equation}
\operatorname*{Im}g(z)=\,b\,\operatorname*{Im}(z)\text{ }\left\{
\begin{array}
[c]{cc}%
\,<\,0 & \text{ \ \ for \ \ }z\in(0,\,i\,a]\medskip\\
\,>\,0 & \text{ \ \ for \ \ }z\in\lbrack-i\,a,0).
\end{array}
\right. \label{f6_2a4}%
\end{equation}
From (\ref{f6_2a3}), (\ref{f6_2a4}), $1+b>0$, and the harmonicity of
$\operatorname*{Im}g$, we deduce that the set%
\begin{equation}
\{\,z\,|\,\operatorname*{Im}g(z)=0\,\}\,=\,\mathbb{R}\cup\gamma\label{f6_2b2}%
\end{equation}
implicitly defines an analytic Jordan curve $\gamma$, which is contained in
$\mathbb{C}\diagdown\left[  -i\,a,\,i\,a\right]  $. We parameterize this curve
by $\gamma:\left[  0,2\pi\right]  \longrightarrow\mathbb{C}$ in such a way
that it is positively oriented in $\mathbb{C}$ and that%
\begin{equation}
\gamma|_{(0,\pi)}\subset\{\,\operatorname*{Im}(z)>0\,\}\text{, }%
\gamma(0)=:r_{0}>0,\text{ and }\gamma\left(  2\pi-t\right)  =\overline
{\gamma\left(  t\right)  }\text{ \ for \ }t\in\left[  0,\pi\right]
.\label{f6_2b1}%
\end{equation}
From (\ref{f6_2b2}) it follows that $g$ is real on $\gamma$. Further, we have%
\begin{equation}
\left(  g\circ\gamma\right)  ^{\prime}\left(  t\right)  \,<\,0\text{ \ \ for
\ \ }t\in(0,\pi).\label{f6_2b3}%
\end{equation}
Indeed, if we set $D_{+}:=\operatorname*{Ext}(\gamma)\cap
\{\,\operatorname*{Im}(z)>0\,\}$ and $D_{-}:=\operatorname*{Int}(\gamma
)\cap\{\,\operatorname*{Im}(z)>0\,\}$, then it follows from (\ref{f6_2a3}),
$1+b>0$, (\ref{f6_2a4}), and (\ref{f6_2b2}) that
\[
\operatorname*{Im}g(z)\left\{
\begin{array}
[c]{cc}%
>0\, & \text{ \ \ for \ \ }z\in D_{+}\\
<0 & \text{ \ \ for \ \ }z\in D_{-},
\end{array}
\right.
\]
and with the harmonicity of $\operatorname*{Im}g$ we deduce that%
\[
(\frac{\partial}{\partial n}\operatorname*{Im}g)\circ\gamma\left(  t\right)
\,<\,0\text{ \ \ for \ \ }t\in(0,\pi),
\]
where $\partial/\partial n$ denotes the normal derivative on $\gamma$ pointing
into $D_{-}$. The inequality in (\ref{f6_2b3}) then follows by the
Cauchy-Riemann differential equations and the fact that $g\circ\gamma
=\operatorname{Re}g\circ\gamma$.

With the Jordan curve $\gamma$ and the inequality in (\ref{f6_2b3}) we are
prepared to prove the positivity of the integral $I_{0}$ in (\ref{f6_2a1}).
Using $\gamma$ as integration path in the integral in the first line of
(\ref{f6_2a1}) yields that%
\begin{align}
I_{0}  & =\frac{1}{2\pi i}\int_{0}^{2\pi}e^{g\circ\gamma\left(  t\right)
}\gamma^{\prime}\left(  t\right)  dt\,\,=\,\,\frac{1}{\pi}\operatorname*{Im}%
\int_{0}^{\pi}e^{g\circ\gamma\left(  t\right)  }\gamma^{\prime}\left(
t\right)  dt\medskip\nonumber\\
& =\frac{1}{\pi}\operatorname*{Im}\left[  e^{g\circ\gamma\left(  t\right)
}\gamma\left(  t\right)  \right]  _{0}^{\pi}-\frac{1}{\pi}\operatorname*{Im}%
\int_{0}^{\pi}\left(  g\circ\gamma\right)  ^{\prime}\left(  t\right)
e^{g\circ\gamma\left(  t\right)  }\gamma\left(  t\right)  dt\medskip
\label{f6_2b4}\\
& =-\frac{1}{\pi}\int_{0}^{\pi}\left(  g\circ\gamma\right)  ^{\prime}\left(
t\right)  e^{g\circ\gamma\left(  t\right)  }\operatorname*{Im}\left(
\gamma\left(  t\right)  \right)  dt\,>\,0.\nonumber
\end{align}
Indeed, the second equality in the first line of (\ref{f6_2b4}) is a
consequence of the symmetry relations $\left(  g\circ\gamma\right)  \left(
t\right)  =$ $\left(  g\circ\gamma\right)  \left(  2\pi-t\right)  $,
$\gamma^{\prime}\left(  t\right)  =$ $-\,\overline{\gamma^{\prime}\left(
2\pi-t\right)  }$, and $\gamma\left(  t\right)  =$ $\overline{\gamma\left(
2\pi-t\right)  }$ for $t\in\lbrack0,2\pi)$. The next equality follows from
partial integration, and the last equality is a consequence of
$\operatorname*{Im}\gamma(0)=\operatorname*{Im}\gamma\left(  \pi\right)
=0$\ and $\operatorname*{Im}\left(  g\circ\gamma\right)  \left(  t\right)  =0$
for $t\in\lbrack0,2\pi)$. Finally, the inequality in (\ref{f6_2b4}) is a
consequence of (\ref{f6_2b3}) together with $\operatorname*{Im}\gamma\left(
t\right)  >0$ for $t\in(0,\pi)$.

With (\ref{f6_2b4}) we have verified that $w_{A,B}(x)\,>\,0$ for all
$x\in\left(  b_{1},b_{2}\right)  $, which completes the proof of Proposition
\ref{sec1_prop1}.$\medskip$

\subsection{\label{sec6_3}A Comparison with the Solution in
\cite{MehtaKumar76}}

\qquad In \cite[Formulae (2.13) - (2.16)]{MehtaKumar76} an explicit
representation for the measure $\mu_{A,B}$ has been proved for the case of
dimension $n=2$, in which the expression of the density function $w_{A,B}$
differs considerably in its appearance from representation (\ref{f1_1e}) in
Proposition \ref{sec1_prop1}; it reads\footnote{Formula (2.15) of
\cite{MehtaKumar76}, which is reproduced here as (\ref{f6_3a2}), contains a
misprint; there is written erroneously $2n+1$ instead of $2n-1$ in the
exponent of the denominator. The correction can easily be verified by
following its derivation starting from (2.11) in \cite{MehtaKumar76}.} as%
\begin{align}
& w_{A,B}(x)\,=\,\exp\left(  \frac{a_{11}(b_{2}-x)+a_{22}(x-b_{1})}%
{b_{2}-b_{1}}\right)  \,G_{12}(x)\text{ \ \ with}\medskip\label{f6_3a1}\\
& G_{12}(x)\,=\,\sum_{j=1}^{\infty}\frac{|a_{12}|^{2\,j}}{j!\,(j-1)!}%
\frac{(b_{2}-x)^{n-1}(x-b_{1})^{n-1}}{(b_{2}-b_{1})^{2n-1}},\text{ \ \ }%
b_{1}<x<b_{2}\text{, \ \ \ }\label{f6_3a2}%
\end{align}
where we use the terminology from Proposition \ref{sec1_prop1}. The
representations (\ref{f6_3a2}) and (\ref{f1_1e}) have not only a rather
different appearance, they have also been obtained by very different
approaches. However, they are identical, as will be shown in the next lines.
We have to show that%
\begin{equation}
G_{12}(x)=\frac{4}{(b_{2}-b_{1})\,\pi}\int_{0}^{|a_{12}|}\cos\left(
\frac{b_{2}+b_{1}-2\,x}{b_{2}-b_{1}}\,u\right)  \,\sinh\left(  \sqrt
{|a_{12}|^{2}-u^{2}}\right)  du\label{f6_3a3}%
\end{equation}
for $b_{1}<x<b_{2}$.

We use the same abbreviations $a$\ and $b$ as in (\ref{f6_2a2}). From%
\[
\cos\left(  b\,u\right)  \,\sinh\left(  \sqrt{a^{2}-u^{2}}\right)
\,=\,\sum_{j=0}^{\infty}\sum_{k=1}^{\infty}\frac{(-1)^{j}\,b^{2j}%
}{(2j)!\,(2k-1)!}\frac{u^{2j}(a^{2}-u^{2})^{k}}{\sqrt{a^{2}-u^{2}}}%
\]
and%
\begin{align*}
\int_{0}^{a}\frac{u^{2j}(a^{2}-u^{2})^{k}}{\sqrt{a^{2}-u^{2}}}du  &
=a^{2\,(j+k)}\frac{\Gamma(j+\frac{1}{2})\Gamma(k+\frac{1}{2})}{(j+k)!}%
\medskip\\
& =\pi\,a^{2(j+k)}\frac{(2j)!\,(2k)!}{2^{2(j+k)}\,(j+k)!\,j!\,k!}%
\end{align*}
we deduce that%
\begin{align}
& \int_{0}^{a}\cos\left(  b\,u\right)  \,\sinh\left(  \sqrt{a^{2}-u^{2}%
}\right)  du=\medskip\nonumber\\
& \text{ \ \ \ \ \ \ \ \ \ \ \ \ \ \ \ \ \ \ }=\,\pi\,\sum_{j=0}^{\infty}%
\sum_{k=1}^{\infty}(-1)^{j}\,b^{2j}\,a^{2\,(j+k)}\frac{4^{-(j+k)}%
}{(j+k)!\,j!\,(k-1)!}\medskip\nonumber\\
& \text{ \ \ \ \ \ \ \ \ \ \ \ \ \ \ \ \ \ \ }=\,\pi\,\sum_{n=1}^{\infty}%
\frac{a^{2n}}{4^{n}\,n!\,(n-1)!}\sum_{j=0}^{n-1}(-1)^{j}\frac{(n-1)!}%
{j!\,(n-j-1)!}b^{2\,j}\text{ \ \ \ \ \ }\medskip\label{f6_3a4}%
\end{align}%
\begin{align}
& \text{ \ \ \ \ \ \ \ \ \ \ \ \ \ \ \ \ \ \ }=\,\frac{\pi}{4}\,\sum
_{n=1}^{\infty}\frac{a^{2\,n}}{n!\,(n-1)!}\left(  \frac{1-b^{2}}{4}\right)
^{n-1}\medskip\nonumber\\
& \text{ \ \ \ \ \ \ \ \ \ \ \ \ \ \ \ \ \ \ }=\,\frac{\pi}{4}\,\sum
_{n=1}^{\infty}\frac{|a_{12}|^{2\,n}}{n!\,(n-1)!}\frac{(b_{2}-x)^{n-1}%
(x-b_{1})^{n-1}}{(b_{2}-b_{1})^{2\left(  n-1\right)  }}.\text{
\ \ \ \ \ \ \ \ \ \ \ }\nonumber
\end{align}
The last equality in (\ref{f6_3a4}) follows from%
\[
\frac{1}{4}(1-b^{2})=\frac{1}{4}\left(  1-\left(  \frac{b_{2}+b_{1}%
-2\,x}{b_{2}-b_{1}}\right)  ^{2}\right)  =\frac{(b_{2}-x)(x-b_{1})}%
{(b_{2}-b_{1})^{2}}.
\]
With (\ref{f6_3a4}) identity (\ref{f6_3a3}) is proved.\bigskip

\textit{Acknowledgements}. The author is very grateful to \textit{Pierre
Moussa} for his enthusiastic welcome of the first version of the present paper
and for valuable hints to interesting earlier publications, to \textit{Peter
Landweber} for many improvements in the manuscript, and last but not least, to
\textit{Elliott H. Lieb} and \textit{Robert Seiringer}\ for a valuable
discussion, and especially for the suggestion to simplify and shorten the
proof of the conjecture dramatically by going for a direct verification of the
formulae (\ref{f1_3a}), (\ref{f1_3b}), and (\ref{f1_3c}) in Theorem
\ref{sec1_thm2}, as is now done in Section \ref{sec3}. In the original version
of the paper these formulae were proved by a lengthy, asymptotic analysis of
the function (\ref{f1_1a0}) with a subsequent use of the Post-Widder inversion
formulae for Laplace transforms.

\bibliographystyle{plain}

\newif\ifabfull\abfulltrue

\bigskip
\end{document}